\def\({\big(}
\def\){\big)}
\def\Z{\mathbb{Z}}
\def\Q{\mathbb{Q}}
\def\N{\mathbb{N}}
\def\mR{\mathcal{R}}
\def\O{\mathcal{O}}
\def\lam{\lambda}
\def\Lam{\Lambda}
\def\Sym{\mathfrak{S}}
\def\bc{\mathbf{c}}
\newcommand\RR{\mathscr{R}}
\newcommand\R[1][n]{\RR_{#1}^{\Lambda}}
\def\bn[#1,#2]{\begin{bmatrix}#1\\#2\end{bmatrix}}
\def\fg{\mathfrak{g}}
\DeclareMathOperator\lmod{\!-mod}
\DeclareMathOperator\lgmod{\!-gmod}
\DeclareMathOperator\Hom{Hom}
\DeclareMathOperator\End{End}
\DeclareMathOperator\soc{soc}
\DeclareMathOperator\im{Im}
\DeclareMathOperator\head{hd}
\DeclareMathOperator\rad{rad}
\DeclareMathOperator\cha{char}
\DeclareMathOperator\id{id}
\DeclareMathOperator\Tr{Tr}
\DeclareMathOperator\diag{diag}
\DeclareMathOperator\NH{NH}
\DeclareMathOperator\Irr{Irr}
\title[Cocenter of the cyclotomic quiver Hecke algebras]
{On the maximal and minimal degree components of the cocenter of the cyclotomic KLR algebras}
\subjclass[2010]{20C08, 16G99, 06B15}
\keywords{Cyclotomic quiver Hecke algebras, categorification}
\author{Jun Hu}\address{Key Laboratory of Algebraic Lie Theory and Analysis of Ministry of Education\\
School of Mathematics and Statistics\\
  Beijing Institute of Technology\\
  Beijing, 100081, P.R. China}
\email{junhu404@bit.edu.cn}
\author[Lei Shi]{Lei Shi\textsuperscript{\Letter}}\thanks{\Letter Lei Shi \qquad Email: 3120195738@bit.edu.cn}
\address{School of Mathematics and Statistics\\
  Beijing Institute of Technology\\
  Beijing, 100081, P.R. China}
\email{3120195738@bit.edu.cn}
\newtheorem*{conja}{Conjecture A}
\numberwithin{equation}{section}
\newtheorem{prop}[equation]{Proposition}
\newtheorem{thm}[equation]{Theorem}
\newtheorem{cor}[equation]{Corollary}
\newtheorem{lem}[equation]{Lemma}
\theoremstyle{definition}
\newtheorem{dfn}[equation]{Definition}
\newtheorem{conv}[equation]{Convention}
\theoremstyle{remark}
\newtheorem{rem}[equation]{Remark}
\begin{document}

\begin{abstract} Let $\R[\alpha]$ be the cyclotomic KLR algebra associated to a symmetrizable Kac-Moody Lie algebra $\mathfrak{g}$ and polynomials $\{Q_{ij}(u,v)\}_{i,j\in I}$. Shan, Varagnolo and Vasserot show that, when the ground field $K$ has characteristic $0$, the degree $d$ component of the cocenter $\Tr(\R[\alpha])$ is nonzero only if $0\leq d\leq d_{\Lam,\alpha}$. In this paper we show that this holds true for arbitrary ground field $K$, arbitrary $\fg$ and arbitrary polynomials $\{Q_{ij}(u,v)\}_{i,j\in I}$. We generalize our earlier results \cite[Theorem 1.3]{HS3} on the $K$-linear generators of $\Tr(\R[\alpha]), \Tr(\R[\alpha])_0, \Tr(\R[\alpha])_{d_{\Lam,\alpha}}$ to arbitrary ground field $K$. Moreover, we show that the dimension of the degree $0$ component $\Tr(\R[\alpha])_0$ is always equal to $\dim V(\Lam)_{\Lam-\alpha}$, where $V(\Lam)$ is the integrable highest weight $U(\mathfrak{g})$-module with highest weight $\Lam$, and we obtain a basis for $\Tr(\R[\alpha])_0$.
\end{abstract}

\maketitle
\setcounter{tocdepth}{1}

\section{Introduction}

To each symmetrizable Kac-Moody Lie algebra $\fg$, an element $\alpha$ in the root lattice $Q_n^+$ with height $n$, and some polynomials $\{Q_{ij}(u,v)\}_{i,j\in I}$, Khovanov, Lauda (\cite{KL1}, \cite{KL2}) and Rouquier (\cite{Rou2}, \cite{Rou1}) associated a remarkable infinite family of $\Z$-graded $K$-algebras $\RR_\alpha$ which are nowadays called quiver Hecke algebras or KLR algebras. These algebras can be used to give categorification of the negative half of the quantum groups $U_q(\fg)$, see \cite{KL1} and \cite{Rou2}. The KLR algebras have some remarkable finite dimensional quotients $\R[\alpha]$ called the cyclotomic quiver Hecke algebras or the cyclotomic KLR algebras, where $\Lam$ is an integral dominant weight of $\fg$. In \cite{KK} and \cite{VV}, it was proved that the cyclotomic KLR algebras $\R[\alpha]$ can be used to give categorification of the integrable highest weight module $V_q(\Lam)$ over the quantum groups $U_q(\fg)$. In the past decade, various important applications of the KLR algebras and their cyclotomic quotients have been found, see \cite{Bow}, \cite{DVV}, \cite{Ev}, \cite{EK}, \cite{HM}, \cite{RW} and the references therein.

We are mostly interested in the structure of the cyclotomic KLR algebras $\R[\alpha]$ over arbitrary ground field $K$ and of {\it arbitrary} types (i.e., for arbitrary symmetrizable Kac-Moody Lie algebra $\fg$ and arbitrary polynomials $\{Q_{ij}(u,v)\}_{i,j\in I}$). In \cite{HS}, closed formulae for these cyclotomic KLR algebras are obtained, which are presented in terms of simple functions involves only the dominant weight $\Lam$ and some simple coroots. Shan, Varagnolo and Vasserot \cite[Proposition 3.10]{SVV} have shown that the algebra $\R[\alpha]$ is a $\Z$-graded symmetric algebra which is equipped with a homogeneous symmetrizing form of degree $-d_{\Lam,\alpha}$. The centers and the cocenters of cyclotomic KLR algebras have attracted much attention in recent years, see \cite{BGHL}, \cite{BHLZ}, \cite{BHLW}, \cite{SVV}, \cite{W0}, \cite{HS3}, \cite{HS4}. The $\Z$-graded symmetric structure on $\R[\alpha]$ implies that there is a $K$-linear isomorphism $(\Tr(\R[\alpha]))^{\circledast}\cong Z(\R[\alpha])\<-d_{\Lam,\alpha}\>$. Assuming $\cha K=0$, Shan, Varagnolo and Vasserot (\cite[Theorem 3.31]{SVV}) have also shown that  $\Tr(\R[\alpha])_d\neq 0$ only if $0\leq d\leq d_{\Lam,\alpha}$ by using some loop operators action on the cocenters induced from a categorical $\mathfrak{sl}_2$ representation. In \cite[Theorem 1.3]{HS3}, we reprove this result for the cyclotomic KLR algebras $\R[\alpha]$ of {\it arbitrary} types by some elementary argument and remove the characteristic $0$ assumption for the ``$d\geq 0$ part'' of the statement. The following theorem is the first main result of this paper.

\begin{thm}\label{mainthm1} Let $K$ be an arbitrary field. Let $\R[\alpha]$ be a cyclotomic KLR algebra over $K$ of arbitrary type. Then $\Tr(\R[\alpha])_d\neq 0$ only if $0\leq d\leq d_{\Lam,\alpha}$ and $d\in 2\Z$. Similarly, $Z(\R[\alpha])_d\neq 0$ only if $0\leq d\leq d_{\Lam,\alpha}$ and $d\in 2\Z$.
\end{thm}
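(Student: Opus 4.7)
I decouple the range $0 \le d \le d_{\Lam,\alpha}$ and the parity $d \in 2\Z$, and reduce both, simultaneously for $\Tr$ and $Z$, to properties of $\R[\alpha]$ itself or its diagonal idempotent blocks.

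\textbf{Range.} I invoke the $\Z$-graded symmetric algebra structure of \cite{SVV}: the non-degenerate symmetrizing form of degree $-d_{\Lam,\alpha}$ induces a perfect pairing $(\R[\alpha])_d \times (\R[\alpha])_{d_{\Lam,\alpha}-d} \to K$, so that $\dim(\R[\alpha])_d = \dim(\R[\alpha])_{d_{\Lam,\alpha}-d}$. Coupled with the non-negativity of the grading on $\R[\alpha]$, this forces $\R[\alpha]$ itself to live in degrees $[0, d_{\Lam,\alpha}]$; since $\Tr(\R[\alpha])$ is a quotient and $Z(\R[\alpha])$ is a subspace of $\R[\alpha]$, both inherit the same bound.

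\textbf{Parity.} It suffices to show that each diagonal block $e(\bi)\R[\alpha]e(\bi)$ is concentrated in even degrees. Granting this, for $\bi \ne \bj$ the identity $e(\bj) x e(\bi) = [e(\bj) x, e(\bi)] \in [\R[\alpha], \R[\alpha]]$ places all off-diagonal pieces in the commutator ideal, so $\Tr(\R[\alpha])$ is a quotient of $\bigoplus_\bi e(\bi)\R[\alpha]e(\bi)$; and any $z \in Z(\R[\alpha])$ satisfies $e(\bj) z e(\bi) = e(\bj) e(\bi) z = 0$ for $\bi \ne \bj$, whence $Z(\R[\alpha]) \subseteq \bigoplus_\bi e(\bi)\R[\alpha]e(\bi)$. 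For the diagonal claim I would work with the KLR spanning elements $\psi_w y^k e(\bi)$ for $w \in \mathrm{Stab}_{\Sym_n}(\bi)$. The $y^k$-degree $\sum_r k_r(\alpha_{i_r}, \alpha_{i_r})$ is even because $(\alpha_i, \alpha_i) = 2d_i \in 2\Z$. The $\psi_w$-degree $-\sum_{(s,t) \in \mathrm{inv}(w)}(\alpha_{i_s}, \alpha_{i_t})$ splits into same-residue inversions (each contributing $-2d_{i_s}$, even) and, for each unordered pair $\{a, b\}$ of distinct residues, a mixed contribution. Restricting $w$ to $A_a \cup A_b$ (where $A_c = \{r : i_r = c\}$, a $w$-stable subset) gives $w_a \cdot w_b \in \Sym(A_a) \times \Sym(A_b)$; sign-multiplicativity yields $\ell(w_a w_b) \equiv \ell(w_a) + \ell(w_b) \pmod 2$, whence the number of $A_a$--$A_b$ mixed inversions is even, and the total mixed contribution is even as well.

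\textbf{Main obstacle.} The parity step is elementary and combinatorial. The substantive difficulty is the upper bound $d \le d_{\Lam,\alpha}$ in arbitrary characteristic: the original proof of \cite{SVV} invokes $\mathfrak{sl}_2$-categorical loop operators on the cocenter, a tool restricted to characteristic $0$. My plan substitutes the SVV symmetric structure plus non-negativity of the grading on $\R[\alpha]$; confirming that both ingredients are genuinely valid in the full generality of the paper (arbitrary ground field $K$, arbitrary symmetrizable $\fg$, arbitrary polynomials $\{Q_{ij}\}$) is where I expect the real technical work to lie.
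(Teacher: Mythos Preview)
Your parity argument is correct and is in fact a pleasant alternative to the paper's route. The paper deduces evenness from its explicit spanning set for $\Tr(\R[\alpha])$ (Theorem~\ref{generator}): every spanning element is a product of $x$'s (each of degree $(\alpha_i,\alpha_i)=2d_i$) and of $\tau_l$'s sitting inside a nilHecke block (so $\nu_l=\nu_{l+1}$ and the degree is $-2d_{\nu_l}$), hence has even degree. Your stabilizer/sign argument reaches the same conclusion for the larger space $\bigoplus_\nu e(\nu)\R[\alpha]e(\nu)$ directly, without invoking that spanning set.

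The range argument, however, has a genuine gap: $\R[\alpha]$ is \emph{not} non-negatively graded in general. Whenever $\nu_l=\nu_{l+1}$ one has $\deg\tau_l e(\nu)=-(\alpha_{\nu_l},\alpha_{\nu_l})=-2d_{\nu_l}<0$, and such elements are typically nonzero in the cyclotomic quotient. For a concrete example take $\fg=\mathfrak{sl}_2$, $\Lam=2\Lambda_0$, $\alpha=2\alpha_0$: then $\R[\alpha]$ is the cyclotomic nilHecke algebra $\NH_2/(x_1^2)\cong M_2(K)$, and $\tau_1$ is a nonzero element of degree $-2$. So the symmetry $\dim(\R[\alpha])_d=\dim(\R[\alpha])_{d_{\Lam,\alpha}-d}$ is true but does not by itself pin the algebra into $[0,d_{\Lam,\alpha}]$; the diagonal blocks $e(\nu)\R[\alpha]e(\nu)$ likewise carry negative-degree pieces, so restricting attention to them does not help either.

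The paper's proof of the range is therefore substantially different from what you propose. The lower bound $d\geq 0$ for $\Tr(\R[\alpha])$ comes from the explicit spanning set of Theorem~\ref{generator} (or \cite[Proposition~3.14]{HS3}), whose elements visibly have non-negative degree even though $\R[\alpha]$ itself does not. The upper bound $d\leq d_{\Lam,\alpha}$ is the real content of the paper: it is obtained by refining that spanning set further (the nilHecke computations of Section~3 culminating in Lemma~\ref{relations2}, together with the piecewise-dominant reduction of Lemma~\ref{refineresidue}) until every surviving generator has degree at most $d_{\Lam,\alpha}$; this is exactly where the characteristic-$0$ hypothesis of \cite{SVV} and \cite{HS3} is removed. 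Only \emph{after} both bounds are established for $\Tr$ does the paper invoke the graded duality $\Tr(\R[\alpha])\cong Z(\R[\alpha])^{\circledast}\langle d_{\Lam,\alpha}\rangle$ of Lemma~\ref{tLam} to transfer them to $Z(\R[\alpha])$. Your proposed shortcut via non-negativity of $\R[\alpha]$ would bypass all of this, but unfortunately the premise is false.
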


One of the major unsolved open problem on the structure of cyclotomic KLR algebras of arbitrary types is the following conjecture:

\begin{conja}\label{conja} Let $K$ be an arbitrary field. Let $\R[\alpha]$ be a cyclotomic KLR algebra over $K$ of arbitrary type. The $K$-algebra $\R[\alpha]$ is indecomposable.
\end{conja}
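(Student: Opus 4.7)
The plan is a two-stage reduction. A finite-dimensional $K$-algebra is indecomposable if and only if its center contains no non-trivial idempotents, so it suffices to rule out non-trivial idempotents in $Z(\R[\alpha])$. By Theorem~\ref{mainthm1}, $Z(\R[\alpha])$ is a non-negatively graded, finite-dimensional, commutative $K$-algebra; hence the ideal $Z(\R[\alpha])_+ := \bigoplus_{d>0} Z(\R[\alpha])_d$ is nilpotent, and idempotent-lifting for commutative rings modulo nilpotent ideals yields a canonical bijection between the idempotents of $Z(\R[\alpha])$ and the idempotents of the quotient $Z(\R[\alpha])/Z(\R[\alpha])_+ \cong Z(\R[\alpha])_0$. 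Thus Conjecture~A reduces to the assertion that the finite-dimensional commutative $K$-algebra $Z(\R[\alpha])_0$ is \emph{local}.

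The second stage is to analyse $Z(\R[\alpha])_0$ via the graded symmetric algebra structure of \cite[Proposition~3.10]{SVV}. That structure provides a graded $K$-linear duality identifying $Z(\R[\alpha])_0$ with the linear dual of $\Tr(\R[\alpha])_{d_{\Lam,\alpha}}$, so in particular $\dim_K Z(\R[\alpha])_0 = \dim_K \Tr(\R[\alpha])_{d_{\Lam,\alpha}}$. I would first use the top-degree generators of the cocenter generalized in this paper from \cite[Theorem 1.3]{HS3} to produce an explicit $K$-basis of $\Tr(\R[\alpha])_{d_{\Lam,\alpha}}$, presumably indexed by the same combinatorial data that controls $\dim V(\Lam)_{\Lam-\alpha}$ (so that we obtain a uniform description running parallel to the one for $\Tr(\R[\alpha])_0$). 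Transporting this basis through the symmetrizing form yields an explicit $K$-basis of $Z(\R[\alpha])_0$; the remaining task is to identify the induced commutative-algebra structure and verify that it has a unique maximal ideal.

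The main obstacle is precisely this last step: even after exhibiting a basis, proving that $Z(\R[\alpha])_0$ is local seems genuinely delicate whenever $\dim V(\Lam)_{\Lam-\alpha}$ is large, since a priori $Z(\R[\alpha])_0$ could split as a non-trivial product of local rings. For this I would rely on categorical input. Blocks of $\R[\alpha]$ correspond to connected components of the Ext-quiver of simple $\R[\alpha]$-modules, so Conjecture~A amounts to proving that this Ext-quiver is connected. Using the categorification of $V(\Lam)$ by $\bigoplus_\alpha \R[\alpha]\lmod$ together with the connectedness of the crystal graph of $V(\Lam)$, one would try to link any two simples $L, L'$ of $\R[\alpha]$ by a zig-zag of non-split short exact sequences built from the categorical divided powers $E_i^{(k)}, F_i^{(k)}$ and their adjunction units/counits; in favourable configurations this reduces to a categorified $\mathfrak{sl}_2$-statement that is already understood. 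The principal difficulty is to carry out this Ext-linkage uniformly over all symmetrizable Kac--Moody types and over arbitrary ground fields $K$, without invoking characteristic zero or a simply-laced restriction, and in particular to show that the linkage never crosses between the putative summands produced by a hypothetical non-trivial idempotent of $Z(\R[\alpha])_0$.
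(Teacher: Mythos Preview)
The statement you are attempting to prove is Conjecture~A, which the paper explicitly presents as an \emph{open problem}; there is no proof of it in the paper. So there is nothing to compare your argument against, and the relevant question is whether your outline constitutes genuine progress.

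Your first reduction is fine and is exactly what the paper itself observes: by Theorem~\ref{mainthm1} the center is non-negatively graded, its positive part is nilpotent, and hence indecomposability is equivalent to $Z(\R[\alpha])_0$ having no non-trivial idempotents. You also correctly identify the duality $\dim_K Z(\R[\alpha])_0 = \dim_K \Tr(\R[\alpha])_{d_{\Lam,\alpha}}$.

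The gap is in what you expect this dimension to be. You write that a basis of $\Tr(\R[\alpha])_{d_{\Lam,\alpha}}$ should be ``indexed by the same combinatorial data that controls $\dim V(\Lam)_{\Lam-\alpha}$'' and that proving locality of $Z(\R[\alpha])_0$ is ``delicate whenever $\dim V(\Lam)_{\Lam-\alpha}$ is large''. This conflates the two ends of the duality. It is $\Tr(\R[\alpha])_0 \cong Z(\R[\alpha])_{d_{\Lam,\alpha}}^{*}$ that has dimension $m_{\alpha,\Lam}=\dim V(\Lam)_{\Lam-\alpha}$ (this is Theorem~\ref{mainthm3}). The conjectural statement, as the paper says immediately after stating Conjecture~A, is that $\dim\Tr(\R[\alpha])_{d_{\Lam,\alpha}}=1$, equivalently $Z(\R[\alpha])_0=K\cdot 1$. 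So there is no ``large local algebra'' to analyse: the entire content of the conjecture is already the claim that the spanning set $\{Z^{\Lam}(\nu)\}$ of Theorem~\ref{mainthm2}(1) collapses, up to scalars, to a single element in the cocenter. Your plan to ``produce an explicit $K$-basis'' of size $m_{\alpha,\Lam}$ and then argue locality is therefore aimed at the wrong target.

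Your alternative line via connectedness of the Ext-quiver and the crystal graph is a legitimate and well-known heuristic, but as you yourself note, turning crystal connectedness into Ext-linkage uniformly over all symmetrizable types and all characteristics is exactly the unresolved difficulty; nothing in the proposal goes beyond restating the problem.
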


In view of the isomorphism $(\Tr(\R[\alpha]))^{\circledast}\cong Z(\R[\alpha])\<-d_{\Lam,\alpha}\>$, Conjecture A will follow if one can show that $\dim\Tr(\R[\alpha])_{d_{\Lam,\alpha}}=1$. To the best of knowledge, Conjecture A is currently known to be true if $\fg$ is simply-laced of finite type (\cite[3.4.1]{SVV} or \cite[3.2]{W0}) and $\cha K=0$ or if $\fg$ is of affine type $A$ with some restriction on $\cha K$ (which ensures Brundan-Kleshchev's isomorphism \cite{BK:GradedKL} exists over some field extension of $K$). In all of these cases the equality $\dim\Tr(\R[\alpha])_{d_{\Lam,\alpha}}=1$ holds (see \cite[3.4.1]{SVV}, \cite[3.2]{W0}, \cite[Theorem 1.10]{HS4}). This motivates our study of the maximal degree component $\Tr(\R[\alpha])_{d_{\Lam,\alpha}}$ of the cocenter $\Tr(\R[\alpha])$. In \cite[Theorem 1.3]{HS3}, under the assumption that $\cha K=0$, spanning sets for the maximal degree component $\Tr(\R[\alpha])_{d_{\Lam,\alpha}}$ of the cocenter $\Tr(\R[\alpha])$ as well as for the minimal degree component $\Tr(\R[\alpha])_{0}$ of the cocenter $\Tr(\R[\alpha])$ are given. The following theorem is the second main result of this paper.

\begin{thm}\label{mainthm2}
Let $K$ be an arbitrary field. Let $\R[\alpha]$ be a cyclotomic KLR algebra over $K$ of arbitrary type. We have \begin{enumerate}
\item [1)] $\bigl(\Tr(\RR^\Lam_\alpha)\bigr)_{d_{\Lam,\alpha}}=\text{$K$-{\rm Span}}\Bigl\{Z^\Lam(\nu)+[\R[\alpha],\R[\alpha]] \Bigm|\begin{matrix}\text{where $\nu$ is piecewise}\\ \text{ dominant with respect to $\Lam$}\end{matrix}\Bigr\}$;
\item [2)] $\bigl(\Tr(\RR^\Lam_\alpha)\bigr)_{0}=\text{$K$-{\rm Span}}\Bigl\{e(\nu)^{(-)}+[\R[\alpha],\R[\alpha]] \Bigm|\begin{matrix}\text{where $\nu$ is piecewise}\\ \text{ dominant with respect to $\Lam$}\end{matrix}\Bigr\}$;
\item [3)]  $\Tr(\R[\alpha])=\text{$K$-{\rm Span}}\Bigl\{a+[\R[\alpha],\R[\alpha]] \Bigm|\begin{matrix}a\in \mR^\Lam(\nu),\,\text{where $\nu$ is piecewise}\\ \text{ dominant with respect to $\Lam$}\end{matrix}\Bigr\}$.
\end{enumerate}
where $Z^\Lam(\nu), e(\nu)^{(-)}, \mR^\Lam(\nu)$ are some explicitly defined elements and set given in Definition (\ref{sdfns}).
\end{thm}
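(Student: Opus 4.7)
The strategy is to adapt the characteristic zero argument of \cite[Theorem 1.3]{HS3} to arbitrary fields, using Theorem \ref{mainthm1} as a crucial new input that makes the extremal degree analysis available in any characteristic, regardless of $\cha K$.

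I would first attack part 3), the general spanning statement. Starting from an explicit $K$-basis of $\R[\alpha]$ (for instance the monomial basis coming from the KLR relations, or the standard basis of Hu--Mathas when applicable), each basis element has the form $x e(\nu) y$ for some residue sequence $\nu$ and some polynomial/braid-like factors $x, y$ in the $y_r$'s and $\psi_r$'s. Using the cyclic shifting property $ab \equiv ba \pmod{[\R[\alpha], \R[\alpha]]}$, one can reposition factors so that the cyclotomic relation $y_1^{\<\Lam, \alpha_{i_1}^\vee\>} e(\nu) = 0$ becomes applicable at the leading residue. An inductive procedure on the first position of $\nu$ that fails the piecewise dominance condition then allows one to replace the representative by one supported on $\mR^\Lam(\nu')$ with $\nu'$ piecewise dominant, establishing part 3).

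For parts 1) and 2), I would restrict the spanning set from part 3) to the appropriate graded components. By definition the elements $Z^\Lam(\nu)$ and $e(\nu)^{(-)}$ are generators of the top-degree and degree-zero pieces of $\mR^\Lam(\nu)$ respectively; since Theorem \ref{mainthm1} confines $\Tr(\R[\alpha])$ to degrees $0 \le d \le d_{\Lam,\alpha}$, only these extremal pieces of $\mR^\Lam(\nu)$ can survive in the maximal and minimal degree components of the cocenter. Combining these two observations immediately yields the spanning statements of 1) and 2).

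The main obstacle lies in the cyclic/straightening step for part 3): the characteristic zero argument of \cite{HS3} likely enters either through divisibility by integers arising in intermediate KLR manipulations, or through indirect use of tools from the Shan--Varagnolo--Vasserot categorification that genuinely require $\cha K = 0$. To bypass this, my plan is to organize the straightening by increasing degree and repeatedly invoke Theorem \ref{mainthm1}: any error term of degree $> d_{\Lam,\alpha}$ is automatically zero in the cocenter and can be discarded without further computation, so the potentially characteristic-sensitive reductions become vacuous. The residual manipulations only involve the defining KLR relations, all of which have integer coefficients, so they remain valid over any ground field $K$. The critical sanity check will be that this combination of cyclic shifting plus the top-degree vanishing actually closes the induction in all cases; verifying this uniformly across arbitrary symmetrizable $\fg$ and arbitrary $\{Q_{ij}(u,v)\}_{i,j\in I}$ is where I expect most of the technical work to concentrate.
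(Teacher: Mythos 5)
Your proposal has a fundamental circularity problem: you propose to use Theorem \ref{mainthm1} (the bound $0 \le d \le d_{\Lam,\alpha}$ over arbitrary $K$) as the ``crucial new input'' to prove Theorem \ref{mainthm2}, but in the paper the logical dependence runs the other way. The upper bound $d \le d_{\Lam,\alpha}$ over an arbitrary field was precisely the open part; the paper derives Theorem \ref{mainthm1} as a \emph{consequence} of (the proof of) Theorem \ref{mainthm2}, so you cannot assume it. Only the lower bound $d \ge 0$ is available independently (from \cite[Proposition 3.14]{HS3}), and that alone does not let you discard the problematic terms.

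You have also misdiagnosed where characteristic $0$ enters in \cite{HS3}. The issue is not stray high-degree error terms that could be annihilated by a degree bound; it is that the nilHecke straightening in \cite[Corollary 3.20]{HS3} writes $x_1^k\tau_1\cdots\tau_{n-1}$ (for $k \ge n-1$) modulo commutators in terms of the identity, which over $\Q$ is done by dividing by factorials (since $n! \cdot e_{[1,n]} \equiv 1$ modulo $[\NH_n,\NH_n]$). Over a field of small characteristic that division is unavailable. The paper's fix is the integral nilHecke calculation of Section 3 --- Proposition \ref{pre nil-Hecke relation} and Lemma \ref{nil-Hecke relation} show the relevant congruences hold already over $\Z$ once one replaces scalar multiples of $1$ by the idempotents $e_{[u,v]}$ --- which is why the conclusion of Theorem \ref{mainthm2} involves the new ``divided-power'' idempotents $e(\nu)^{(-)}$ and the modified $Z^\Lam(\nu)$, $\mR^\Lam(\nu)$ rather than the simpler expressions of \cite[Definition 4.10]{HS3}. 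Your proposal never confronts this: it keeps the expressions as if one could still clear denominators, and the ``organize by degree and invoke Theorem \ref{mainthm1}'' step would neither justify the nilHecke congruences nor produce the correct generating set. Once Lemma \ref{relations2} and Lemma \ref{refineresidue} are in hand, the rest of the paper's proof does closely follow the template of \cite[Theorem 1.3]{HS3}, so the high-level plan of reducing to piecewise dominant $\nu$ via cyclic rotation is sound --- the gap is entirely in the missing integral nilHecke machinery and in the unavailable appeal to Theorem \ref{mainthm1}.
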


In other words, Theorem \ref{mainthm2} generalize \cite[Theorem 1.3]{HS3} to arbitrary ground field. The element $e(\nu)^{(-)}$ in the above theorem is an idempotent which can be regarded as a ``divided power'' version of $e(\nu)$ inside the cocenter $\Tr(\R[\alpha])$.

In \cite[Theorem 3.31]{SVV}, Shan, Varagnolo and Vasserot have shown that, under the assumption that $\cha K=0$ and the polynomials
$\{Q_{ij}(u,v)|i,j\in I\}$ be of some specific form, see \cite[(11), Theorem 3.25]{SVV}, $\dim\Tr(\R[\alpha])_0=\dim V(\Lam)_{\Lam-\alpha}$, where $V(\Lam)$ is the integrable highest weight $U(\mathfrak{g})$-module with highest weight $\Lam$. Let $\R[\alpha]\lmod$ be the category of finite dimensional ungraded $\R[\alpha]$-modules. The following theorem is the third main result of this paper.

\begin{thm}\label{mainthm3} Let $K$ be an arbitrary field. Let $\R[\alpha]$ be a cyclotomic KLR algebra over $K$ of arbitrary type. Then $\dim\Tr(\R[\alpha])_0=\dim V(\Lam)_{\Lam-\alpha}$. Moreover, if $\{f_j|1\leq j\leq m_{\alpha,\Lam}\}$ is a set of primitive idempotents such that $\{\R[\alpha]f_j|1\leq j\leq m_{\alpha,\Lam}\}$ is a complete set of pairwise non-isomorphic indecomposable projective module in $\R[\alpha]\lmod$, then the following set $$
\bigl\{f_j+[\R[\alpha],\R[\alpha]]\bigm|1\leq j\leq m_{\alpha,\Lam}\bigr\}
$$
gives a $K$-basis of the minimal degree component $\Tr(\R[\alpha])_0$ of the cocenter $\Tr(\R[\alpha])$.
\end{thm}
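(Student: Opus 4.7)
The plan is to combine Theorem~\ref{mainthm2} part 2), which supplies an explicit spanning set for $\Tr(\R[\alpha])_0$, with a base-change argument to the algebraic closure $\overline K$ and the categorification theorem of Kang--Kashiwara and Varagnolo--Vasserot. First I would use Theorem~\ref{mainthm2} part 2) to reduce the spanning of $\Tr(\R[\alpha])_0$ to the images of primitive idempotents. By that theorem, $\Tr(\R[\alpha])_0$ is the $K$-span of $\{e(\nu)^{(-)}+[\R[\alpha],\R[\alpha]] : \nu \text{ piecewise dominant}\}$. Since $\R[\alpha]$ is non-negatively graded, each homogeneous degree-$0$ idempotent $e(\nu)^{(-)}$ decomposes into a sum of homogeneous primitive orthogonal idempotents of degree $0$, obtained by lifting primitive idempotents from the semisimple quotient of $(\R[\alpha])_0$. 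Two equivalent primitive idempotents $e, e'$ (i.e.\ $\R[\alpha]e \cong \R[\alpha]e'$) satisfy $e-e' = xy-yx \in [\R[\alpha],\R[\alpha]]$ for $x \in e\R[\alpha]e'$ and $y \in e'\R[\alpha]e$ realising the isomorphism, so each primitive summand contributes some $f_j+[\R[\alpha],\R[\alpha]]$. This yields
\[
\Tr(\R[\alpha])_0 \subseteq K\text{-Span}\bigl\{f_j+[\R[\alpha],\R[\alpha]] : 1\leq j\leq m_{\alpha,\Lam}\bigr\},
\]
and in particular $\dim_K \Tr(\R[\alpha])_0 \leq m_{\alpha,\Lam}$.

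Next I would base-change to $\overline K$ for the matching lower bound. Setting $\overline A = \R[\alpha] \otimes_K \overline K$ and letting $\tilde m_{\alpha,\Lam}$ denote the number of iso classes of indecomposable projective $\overline A$-modules, the commutator subspace and quotient commute with the flat extension $-\otimes_K\overline K$, so $\dim_K \Tr(\R[\alpha])_0 = \dim_{\overline K} \Tr(\overline A)_0$. Over the algebraically closed field $\overline K$ every simple $\overline A$-module $\tilde S$ has $\End(\tilde S) = \overline K$, so the trace character $\chi_{\tilde S}(a) = \mathrm{tr}(a\,|\,\tilde S)$ is a trace function on $\overline A$ with $\chi_{\tilde S}(\tilde f) = \dim_{\overline K}(\tilde f \tilde S)$, which equals $1$ when $\tilde S \cong \mathrm{top}(\overline A \tilde f)$ and $0$ otherwise. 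This identity-matrix pairing establishes the linear independence of the $\tilde m_{\alpha,\Lam}$ primitive idempotent classes in $\Tr(\overline A)_0$, so $\dim_{\overline K} \Tr(\overline A)_0 \geq \tilde m_{\alpha,\Lam}$. Moreover base change can split but never merge iso classes of indecomposable projectives: if non-isomorphic $K$-indecomposable projectives $P_j \ncong P_k$ shared a common $\overline K$-indecomposable summand, then their simple tops $S_j, S_k$ would share a common $\overline K$-simple composition factor, whence $\Hom_{\R[\alpha]}(S_j, S_k) \otimes_K \overline K \neq 0$ and Schur's lemma would force $S_j \cong S_k$, a contradiction. Hence $m_{\alpha,\Lam} \leq \tilde m_{\alpha,\Lam}$.

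Applying the Kang--Kashiwara and Varagnolo--Vasserot categorification of $V_q(\Lam)$, specialised at $q=1$ and over $\overline K$, gives $\tilde m_{\alpha,\Lam} = \dim V(\Lam)_{\Lam-\alpha}$. Chaining the inequalities
\[
\dim V(\Lam)_{\Lam-\alpha} = \tilde m_{\alpha,\Lam} \leq \dim_{\overline K}\Tr(\overline A)_0 = \dim_K \Tr(\R[\alpha])_0 \leq m_{\alpha,\Lam} \leq \tilde m_{\alpha,\Lam}
\]
forces every term to be equal. In particular $m_{\alpha,\Lam} = \dim V(\Lam)_{\Lam-\alpha}$, and the spanning set $\{f_j+[\R[\alpha],\R[\alpha]]\}$ from the first paragraph, of cardinality $m_{\alpha,\Lam} = \dim_K \Tr(\R[\alpha])_0$, is forced to be a $K$-basis. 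The main obstacle is the first step: verifying that the divided-power idempotent $e(\nu)^{(-)}$ admits a graded decomposition into homogeneous primitive orthogonal idempotents of degree $0$ requires inspection of its explicit definition in Definition~\ref{sdfns} together with graded lifting of idempotents in $(\R[\alpha])_0$; once this is in place, the remaining arguments are standard.
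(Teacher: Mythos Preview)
Your argument is correct. The upper bound---spanning $\Tr(\R[\alpha])_0$ by the classes of the $f_j$ via Theorem~\ref{mainthm2}(2) and the observation that equivalent primitive idempotents agree modulo commutators---is exactly the paper's approach (the paper cites \cite[\S6, Exercise~14]{CR} and \cite[Proposition~5.8]{GG} for the two ingredients you sketch). Your ``main obstacle'' is not one: a degree-$0$ idempotent in a finite-dimensional $\Z$-graded algebra always lifts to a sum of homogeneous degree-$0$ primitive orthogonal idempotents, precisely by the graded idempotent-lifting you invoke.

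For the lower bound, however, you take a genuinely different and more elementary route than the paper. You pair the idempotent classes against the trace characters $\chi_{\tilde S}$ of the simples over $\overline K$; since the resulting matrix is the identity, linear independence is immediate. The paper instead exploits the graded symmetric structure (Lemma~\ref{tLam}): it passes to $Z(\R[\alpha])_{d_{\Lam,\alpha}}\cong Z(B^\Lam_\alpha)_{d_{\Lam,\alpha}}$ for the basic algebra $B^\Lam_\alpha$, and then uses Corollary~\ref{Pjcor} (that $\head(\mathbb P_i)\cong\soc(\mathbb P_i)\langle -d_{\Lam,\alpha}\rangle$) to construct explicit homogeneous central elements $\theta_i$ of degree $d_{\Lam,\alpha}$, verifying centrality by hand. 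Your argument is shorter and avoids the Frobenius machinery entirely; the paper's argument, though longer, yields the central elements $\theta_i$ as a byproduct and makes the role of the symmetric form transparent.

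Two minor remarks. First, the detour through $m_{\alpha,\Lam}\le\tilde m_{\alpha,\Lam}$ is unnecessary: Kang--Kashiwara \cite[Theorem~6.1]{KK} already gives $\#\Irr\R[\alpha]\lmod=\dim V(\Lam)_{\Lam-\alpha}$ over \emph{any} field $K$, so $m_{\alpha,\Lam}=\tilde m_{\alpha,\Lam}$ from the start (and your $\Hom$-argument for this inequality is delicate over imperfect $K$, since $S_j\otimes_K\overline K$ need not be semisimple). Second, for the conclusion that $f_j+[\R[\alpha],\R[\alpha]]\in\Tr(\R[\alpha])_0$ when the $f_j$ in the statement are not assumed homogeneous, you should note explicitly (as the paper does) that each $f_j$ is conjugate to a homogeneous degree-$0$ primitive idempotent, so its cocenter image is unchanged and lies in degree~$0$.
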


As a byproduct, we also prove the following Morita equivalence results: $$
\R[\alpha] \overset{Morita}{\sim} e_{\alpha,\Lam}\R[\alpha]e_{\alpha,\Lam}\overset{Morita}{\sim} e_{\alpha,\Lam}^{(-)}\R[\alpha]e_{\alpha,\Lam}^{(-)},
$$
where  $$
e_{\alpha,\Lam}:=\sum_{\nu\in\mathscr{P\!D}_\alpha}e(\nu),\,\, e_{\alpha,\Lam}^{(-)}:=\sum_{\nu\in\mathscr{P\!D}_\alpha}e(\nu)^{(-)},
$$
and $\mathscr{P\!D}_\alpha$ is the set of piecewise dominant sequences in $I^\alpha$ with respect to $\Lam$.
\medskip

The paper is organised as follows. In Section two we recall some preliminary definitions and notations for KLR algebras and cyclotomic KLR algebras. In Section three we first do some nilHecke algebra calculation inside the cocenter and then apply the results to the general cyclotomic KLR algebra. The main new idea is to regard the primitive idempotent in the nilHecke algebra as a ``divided power'' version of the identity element inside the cocenter. For the general cyclotomic KLR algebra we shall replace each KLR idempotent $e(\nu)$ with its ``divided power'' version in each ``nilHecke part'' of $\nu$. The three main results of this paper are all proved in Section four. In particular, we generalize \cite[Theorem 3.31]{SVV} and \cite[Theorem 1.3]{HS3} to arbitrary ground field. It would be interesting to know if one can use the second main result Theorem \ref{mainthm2} 1) to show that $\dim\Tr(\R[\alpha])_{d_{\Lam,\alpha}}=1$, and if similar results hold for the cyclotomic KLRW algebras, which was introduced and studied in \cite{W1}, \cite{W2}, \cite{MT1} and \cite{MT2}.
\bigskip\bigskip

\centerline{Acknowledgements}
\bigskip

The research was supported by the National Natural Science Foundation of China (No. 12171029).
\bigskip

\section{Definitions and notations}

In this section we shall give some preliminary definitions and results on the KLR algebras and their cyclotomic quotients.

\begin{dfn} Let $I$ be an index set. An integral matrix $A = (a_{i,j})_{i,j\in I}$ is called a \emph{symmetrizable Cartan matrix} if it satisfies
\begin{enumerate}
  \item $a_{ii} = 2$, $\forall\, i \in I$;
  \item $a_{ij}\in\Z_{\leq 0}$, $\forall\,i\neq j\in I$;
  \item $a_{ij} = 0 \Leftrightarrow \, a_{ji} = 0, \,\, \forall\,i,j \in I$;
  \item there is a diagonal matrix $D = \diag (d_i\in{\Z}_{>0}\mid i\in I)$ such that $DA$ is symmetric.
\end{enumerate}
\end{dfn}

\begin{dfn} A Cartan datum $(A,P,\Pi , P^{\vee} , \Pi^{\vee})$ consists of
\begin{enumerate}
  \item a symmetrizable Cartan matrix $A$;
  \item a free aphelian group $P$ of finite rank, called the \emph{weight lattice};
  \item $\Pi = \{ \alpha_i \in P \mid i\in I \}$, called the set of \emph{simple roots};
  \item $P^{\vee} := \Hom (P,{\Z})$, called the \emph{dual weight lattice} and $\<-,-\>: P^{\vee} \times P \to {\Z}$, the natural pairing;
  \item $\Pi^\vee = \{ h_i \mid i \in I \} \subset P^{\vee}$, called the set of \emph{simple coroots};
\end{enumerate}
satisfying the following properties:
\begin{enumerate}
  \item[(i)] $\langle h_i , \alpha_j \rangle = a_{ij}, \forall\,i,j\in I$,
  \item[(ii)] $\Pi$ is linearly independent,
  \item[(iii)] $\forall\, i \in I$, $\exists\,\Lambda_i \in P$ such that $\langle h_j, \Lambda_i \rangle = \delta_{ij}$ for all $j \in I$.
\end{enumerate}
\end{dfn}

Those $\Lambda_i$ in the above definition are called the \emph{fundamental weights}. We set
$$P^+ := \{ \Lam \in P \mid \< h_i, \Lam \>\in {\Z}_{\geqslant 0},\,\,\forall\,i\in I \},$$
and call it the set of \emph{dominant integral weights}. The free aphelian group $Q := \oplus_{i\in I}{\Z} \alpha_i$ is called the \emph{root lattice}.
Set $Q^+ = \sum_{i\in I} {\Z}_{\geqslant 0}\alpha_i$. For $\alpha = \sum_{i\in I} k_i \alpha_i \in Q^+$, we define the \emph{height} of $\alpha$ to be $|\alpha| = \sum_{i\in I} k_i$. For each $n\in\mathbb{N}$, we set $$
Q_n^{+} := \{ \alpha \in Q^+ \mid |\alpha| = n\}. $$

Let $A$ be a symmetrizable Cartan matrix. Let $\mathfrak{g}=\mathfrak{g}(A)$ be the corresponding Kac-Moody Lie algebra associated to $A$ with Cartan subalgebra $\mathfrak{h}:= \Q \otimes_{{\Z}}P^\vee$. Since $A$ is symmetrizable, there is a symmetric bilinear form $(,)$ on $\mathfrak{h}^*$ satisfying
\begin{align*}
  & (\alpha_i, \alpha_j) = d_i a_{ij} \quad (i,j \in I) \quad \text{and} \\
  & \< h_i, \Lam \> = \frac{2(\alpha_i,\Lam)}{(\alpha_i,\alpha_i)} \quad \text{for any }\Lam \in \mathfrak{h}^* \text{ and } i\in I.
\end{align*}


Let $K$ be a field of arbitrary characteristic. Let $u,v$ be two commuting indeterminates over $K$. We fix a matrix $(Q_{i,j})_{i,j\in I}$ in $K[u,v]$ such that
\begin{align*}
  & Q_{i,j}(u,v) = Q_{j,i}(v,u), \quad  Q_{i,i}(u,v) = 0, \\
  & Q_{i,j}(u,v)=\sum_{k,q\geqslant 0} c_{i,j,k,q}\,u^kv^q, \,\,\,\ \text{if}\ i\neq j.
\end{align*}
where $c_{i,j,-a_{ij},0}\in K^\times$, and $c_{i,j,k,q}\neq 0$ only if $2(\alpha_i,\alpha_j)=-(\alpha_i,\alpha_i)k - (\alpha_j,\alpha_j)q$.

Let $\Sym_n = \<s_1 , \ldots , s_{n-1}\>$ be the symmetric group on $\{1,2,\cdots,n\}$, where $s_i = (i,i+1)$.
Then $\Sym_n$ acts naturally on $I^n$ by places permutation. The orbits of this action is identified with element of $Q_n^{+}$. Then $I^\alpha:=\{\nu=(\nu_1,\cdots,\nu_n)\in I^n|\sum_{j=1}^{n}\alpha_{\nu_j}=\alpha\}$ is the orbit corresponding to $\alpha\in Q_n^+$.


\begin{dfn}\label{KLR}
 Let $\alpha\in Q_n^+$. The quiver Hecke algebra $\RR_\alpha$ associated with a Cartan datum $(A,P,\Pi,P^{\vee},\Pi^{\vee})$, $(Q_{i,j})_{i,j\in I}$ and $\alpha \in Q_n^+$ is the associative algebra over $K$ generated by $e(\nu)\, (\nu\in I^\alpha)$, $x_k\, (1\leqslant k \leqslant n)$, $\tau_l \, (1\leqslant l \leqslant n-1)$ satisfying the following defining relations:
  \begin{equation*}
    \begin{aligned}
      & e(\nu) e(\nu') = \delta_{\nu, \nu'} e(\nu), \ \
      \sum_{\nu \in I^{\alpha}}  e(\nu) = 1, \\
      & x_{k} x_{l} = x_{l} x_{k}, \ \ x_{k} e(\nu) = e(\nu) x_{k}, \\
      & \tau_{l} e(\nu) = e(s_{l}(\nu)) \tau_{l}, \ \ \tau_{k} \tau_{l} = \tau_{l} \tau_{k} \ \ \text{if} \ |k-l|>1, \\
      & \tau_{k}^2 e(\nu) = Q_{\nu_{k}, \nu_{k+1}} (x_{k}, x_{k+1})e(\nu), \\
      & (\tau_{k} x_{l} - x_{s_k(l)} \tau_{k}) e(\nu) = \begin{cases}
      -e(\nu) \ \ & \text{if} \ l=k, \nu_{k} = \nu_{k+1}, \\
      e(\nu) \ \ & \text{if} \ l=k+1, \nu_{k}=\nu_{k+1}, \\
      0 \ \ & \text{otherwise},
      \end{cases} \\[.5ex]
      & (\tau_{k+1} \tau_{k} \tau_{k+1}-\tau_{k} \tau_{k+1} \tau_{k}) e(\nu)\\
      &\hspace*{8ex} =\begin{cases} \dfrac{Q_{\nu_{k}, \nu_{k+1}}(x_{k},
      x_{k+1}) - Q_{\nu_{k+2}, \nu_{k+1}}(x_{k+2}, x_{k+1})}{x_{k} - x_{k+2}}e(\nu) \ \ & \text{if} \
      \nu_{k} = \nu_{k+2}, \\
      0 \ \ & \text{otherwise}.
      \end{cases}
    \end{aligned}
  \end{equation*}
\end{dfn}

In particular, $\RR_0 \cong K$.
The algebra $\RR_\alpha$ is ${\Z}$-graded, whose grading is uniquely determined by
\begin{equation*}
  \deg e(\nu) = 0, \qquad \deg x_k e(\nu) = (\alpha_{\nu_k},\alpha_{\nu_k}), \qquad \deg \tau_l e(\nu) = - (\alpha_{\nu_l},\alpha_{\nu_{l+1}}).
\end{equation*}

Let $\Lambda \in P^+$ be a dominant integral weight. We now recall the definition of the cyclotomic KLR algebra $\RR_\alpha^\Lambda$.
For $1\leqslant k \leqslant n$, we define
\begin{equation*}
  a^\Lambda_\alpha (x_k) = \sum_{\nu\in I^\alpha} x_k^{\<h_{\nu_k},\Lam\>}e(\nu) \in \RR_\alpha.
\end{equation*}

\begin{dfn}\label{cyclotomicKLR}
  Set $I_{\Lambda,\alpha} = \RR_\alpha a^\Lambda_\alpha (x_1) \RR_\alpha$. The {cyclotomic KLR algebra} $\RR^\Lambda_\alpha$ is defined to be the quotient algebra:   $$
  \RR_\alpha^\Lambda = \RR_\alpha/I_{\Lambda,\alpha}. $$
\end{dfn}
Sometimes we shall write $\R[\alpha](K)$ instead of $\R[\alpha]$ in order to emphasize the ground field $K$. In general, if $\O$ is a commutative ring and
$Q_{ij}(u,v)\in\O[u,v]$ for any $i,j\in I$, then we can define the cyclotomic KLR algebra $\R[\alpha](\O)$ over $\O$ in a similar way.
By construction, the cyclotomic KLR algebra $\R[\alpha]$ inherits a $\Z$-grading from $\RR_\alpha$. Let $\R[\alpha]\lgmod$ be the category of finite dimensional graded $\R[\alpha]$-modules.

\begin{dfn} We use $*$ to denote the unique $K$-linear anti-involution of $\R[\alpha]$ which is the identity map on all of its KLR generators: $$
e(\nu),\,\,\tau_r,\,\,x_k,\,\,\nu\in I^\alpha, 1\leq r<n, 1\leq k\leq n.
$$
\end{dfn}

Let $\circledast$ be the duality functor induced from $\tau$. That is, for any $M\in\R[\alpha]\lgmod$, $M^{\circledast}:=\Hom_K(M,K)$ as a $K$-linear space.
As a $\R[\alpha]$-module, $$
(a\cdot f)(x):=f(a^*x),\,\,\forall\,a\in\R[\alpha], f\in M^{\circledast}, x\in M .
$$
It is clear that $M^{\circledast}\in\R[\alpha]\lgmod$. We call ``$\circledast$'' the duality functor on $\R[\alpha]\lgmod$.

%



\begin{lem}[\text{\rm \cite[Proposition 3.10]{SVV}}]\label{tLam} The $K$-algebra $\R[\alpha]$ is equipped with a homogeneous symmetrizing form of degree $-d_{\Lam,\alpha}$. In particular, there is an $\R[\alpha]$-$\R[\alpha]$-bimodule isomorphism: $$
\bigl(\R[\alpha]\bigr)^{\circledast}\cong\R[\alpha]\<-d_{\Lam,\alpha}\>, $$
which induces a $K$-linear isomorphism $\Tr(\R[\alpha])\cong \bigl(Z(\R[\alpha])\bigr)^{\circledast}\<d_{\Lam,\alpha}\>$, where $d_{\Lam,\alpha}:=2(\alpha,\Lam)-(\alpha,\alpha)$.
\end{lem}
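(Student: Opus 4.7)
The plan splits into two parts: (1) construct an explicit homogeneous $K$-linear trace $t^\Lam\colon \R[\alpha]\to K$ of degree $-d_{\Lam,\alpha}$ and verify it is a symmetrizing form (symmetric and nondegenerate), and (2) deduce both isomorphisms from such a form by standard Frobenius-algebra arguments.

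For step (1), the strategy is to exploit a monomial basis of $\R[\alpha]$ of the form $\{\tau_w x^a e(\nu)\}$, indexed by fixed reduced expressions for $w\in\Sym_n$, residue sequences $\nu\in I^\alpha$, and exponent vectors $a$ truncated by the cyclotomic ideal. One then declares $t^\Lam$ to vanish on every basis element except those in degree exactly $d_{\Lam,\alpha}$ of a chosen ``top'' form --- concretely, elements of the shape $\tau_{w_0} x^{a^{\max}(\nu)} e(\nu)$ where $a^{\max}(\nu)$ is the maximal exponent vector allowed by the cyclotomic relation $a^\Lambda_\alpha(x_1) = 0$ --- on which $t^\Lam$ returns $1$. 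The substantive work is then to verify symmetry $t^\Lam(ab) = t^\Lam(ba)$ and nondegeneracy of the associated bilinear form, by direct computation using the KLR relations together with the cyclotomic relation. A more conceptual alternative, pursued by Shan-Varagnolo-Vasserot, realises $\R[\alpha]$ as an endomorphism algebra inside a categorification of $V(\Lam)$ and extracts $t^\Lam$ from the biadjunction between the induction and restriction functors, whose explicit degree twist matches $-d_{\Lam,\alpha}$.

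Once $t^\Lam$ is in hand, step (2) is formal. The map $\R[\alpha]\<-d_{\Lam,\alpha}\>\to \bigl(\R[\alpha]\bigr)^\circledast$, $a\mapsto t^\Lam(a\cdot -)$, is degree preserving, is a bimodule homomorphism by symmetry of $t^\Lam$, and is an isomorphism by nondegeneracy, giving the first claim. For the cocenter-center duality, apply the coinvariants functor $(-)_{\R[\alpha],\R[\alpha]}$ to this bimodule isomorphism and use the standard identification --- valid for any finite-dimensional bimodule --- of the coinvariants of the graded dual with the graded dual of the invariants; since $\R[\alpha]^{\R[\alpha],\R[\alpha]} = Z(\R[\alpha])$ and $\R[\alpha]_{\R[\alpha],\R[\alpha]} = \Tr(\R[\alpha])$, this yields $\Tr(\R[\alpha]) \cong \bigl(Z(\R[\alpha])\bigr)^\circledast\<d_{\Lam,\alpha}\>$ with the stated degree shift.

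The main obstacle is step (1), specifically establishing the symmetry and nondegeneracy of $t^\Lam$. In full generality (arbitrary Kac-Moody type, arbitrary $Q_{ij}$, arbitrary ground field) the affine algebra $\RR_\alpha$ is itself not symmetric, so no naive restriction of a trace from $\RR_\alpha$ works, and one must precisely control the structure constants of the cyclotomic quotient with respect to a chosen basis. The categorification route sidesteps this combinatorics but pays by relying on the deep categorification theorems of Kang-Kashiwara and Varagnolo-Vasserot, together with a biadjointness between $E$ and $F$ carrying an explicitly computed degree shift equal to $-d_{\Lam,\alpha}$.
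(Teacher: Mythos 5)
This lemma is not proved in the paper; it is cited verbatim from \cite[Proposition 3.10]{SVV}, so there is no in-paper argument to compare against. That said, a few remarks on your reconstruction are in order.

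Your step (2) is correct and is the standard Frobenius-algebra bookkeeping: given a nondegenerate symmetric homogeneous trace, one gets the bimodule isomorphism, and passing to coinvariants while using $(M^\circledast)_{\RR,\RR}\cong(M^{\RR,\RR})^\circledast$ for finite-dimensional bimodules yields the cocenter--center duality with the advertised shift. Your second option for step (1) --- extracting the trace from the biadjunction $(E,F)$ in the categorification of $V(\Lam)$, with the degree of the unit/counit giving $-d_{\Lam,\alpha}$ --- is in fact the route taken in \cite{SVV}, and is the one that works in full generality.

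Your first proposed route for step (1) has a real gap. The cyclotomic ideal $I_{\Lam,\alpha}=\RR_\alpha a^\Lam_\alpha(x_1)\RR_\alpha$ does \emph{not} simply truncate the $x$-exponent range against the affine monomial basis $\{\tau_w x^a e(\nu)\}$; determining a monomial basis of $\R[\alpha]$ is already a delicate problem (cf.\ \cite{HS}), and there is no canonical ``maximal exponent vector'' $a^{\max}(\nu)$ in general type. Moreover, defining $t^\Lam$ to be $1$ on some chosen top-degree monomials and $0$ elsewhere begs the two hard questions --- symmetry of $t^\Lam$ and nondegeneracy of the induced pairing --- which cannot be read off from such a declaration and are exactly what the biadjunction argument is there to settle. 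There is also a latent circularity: identifying the top nonzero degree of $\R[\alpha]$ with $d_{\Lam,\alpha}$ is itself most cleanly deduced \emph{from} the symmetry, not established before it. So the first route should be regarded as heuristic at best, while your second route is the one to commit to.
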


\begin{conv} Any product of the form $\prod_{j=a}^{b}A_j$ is understood as empty whenever $a>b$. Any increasing-index sequence expression of the form $\tau_j\tau_{j+1}\cdots \tau_k$ is understood as empty whenever $j>k$. Similarly, any decreasing-index sequence expression of the form $\tau_i\tau_{i-1}\cdots \tau_k$ is understood as empty whenever $i<k$.
\end{conv}
\bigskip

\section{Some nilHecke algebra calculations on cocenters}

In this section we shall do some subtle nilHecke algebra calculations on cocenters with a purpose to refine and generalize \cite[Lemma 3.17, Corollary 3.20]{HS3} to the ground field of arbitrary characteristic. The main new observation is that the images of all the primitive idempotents of the nilHecke algebra $\NH_n$ are the same in the cocenter, and thus one can regard each primitive idempotent as a common ``divided power'' of the identity element inside the cocenter of $\NH_n$. In general, we shall take this ``divided power'' in each ``nilHecke part'' into account when constructing a compact form of a $K$-linear generators of the cocenter of the cyclotomic KLR algebra $\R[\alpha]$.


\begin{dfn} Let $R$ be an integral domain. Let $n\in\Z^{\geq 1}$. The nilHecke algebra $\NH_n(R)$ of type $A$ is defined to be the unital $R$-algebra which can be presented by the generators $x_1,\cdots,x_n,\tau_1,\cdots,\tau_{n-1}$ and the following relations: $$\begin{aligned}
\tau_r^2=0,\,\,\forall\,1\leq r<n;\quad \tau_i\tau_j=\tau_j\tau_i,\,\,\forall\,1\leq i<j-1<n-1;\\
\tau_a\tau_{a+1}\tau_a=\tau_{a+1}\tau_a\tau_{a+1},\,\,\forall\,1\leq a<n-1;\\
x_jx_k=x_kx_j,\,\,1\leq j,k\leq n;\quad \tau_a x_b=x_b\tau_a,\,\,\forall\,b\notin\{a,a+1\};\\
\tau_j x_{j+1}=x_j\tau_j+1,\,\,x_{j+1}\tau_j=\tau_j x_j+1,\,\,\forall\,1\leq j<n-1 .
\end{aligned}
$$
\end{dfn}

Recall that $e_{[1,n]}:=\tau_{w_0}x_2x_3^{2}\cdots x_{n}^{n-1}\in \NH_{n}(\Z)$.

\begin{lem}\text{\rm (\cite[Lemma 2.19, Proposition 2.21]{Rou2}, \cite{KL1})}\label{idempn} The element $e_{[1,n]}$ is a degree $0$ homogeneous primitive idempotent in $\NH_{n}(\Z)$, and $1-e_{[1,n]}$ can be decomposed into a direct sum of $n!-1$ pairwise orthogonal primitive idempotents $e_{2,n},\cdots,e_{n!,n}$. Moreover, for any $2\leq j\leq n!$, $\NH_{n}(\Z)e_{j,n}\cong\NH_{n}(\Z) e_{[1,n]}$ is isomorphic to the unique indecomposable finitely generated projective $\NH_{n}(\Z)$-module $P(n)$.
\end{lem}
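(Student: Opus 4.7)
The plan is to work with the faithful polynomial representation of $\NH_n(\Z)$ on $V=\Z[x_1,\dots,x_n]$, in which each $x_i$ acts by multiplication and each $\tau_i$ acts as the divided difference $\partial_i(f):=(f-s_if)/(x_i-x_{i+1})$. A standard PBW-type basis argument identifies the center $Z(\NH_n(\Z))$ with $\Lambda:=V^{\Sym_n}$, shows that $V$ is free over $\Lambda$ of rank $n!$, and yields an algebra isomorphism $\NH_n(\Z)\cong\End_{\Lambda}(V)\cong M_{n!}(\Lambda)$. This matrix-algebra picture immediately implies that, up to isomorphism and grading shift, $\NH_n(\Z)$ has a unique indecomposable finitely generated projective module $P(n)\cong V$.

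Next, I verify idempotency. Granting the classical identity $\partial_{w_0}(x_2x_3^{2}\cdots x_n^{n-1})=1$, the element $e:=e_{[1,n]}$ acts on $V$ by $f\mapsto\partial_{w_0}(x_2\cdots x_n^{n-1}\cdot f)$. Because $\partial_{w_0}$ is $\Lambda$-linear with image contained in $\Lambda$, a one-line calculation using this identity shows that $e^2$ and $e$ agree as operators on $V$, hence $e^2=e$ by faithfulness. Homogeneity of degree zero follows from the elementary grading count $\deg\tau_{w_0}=-n(n-1)=-\deg(x_2x_3^{2}\cdots x_n^{n-1})$.

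For primitivity I identify $eV$ with $\Lambda\cdot 1\cong\Lambda$, using surjectivity of $\partial_{w_0}$ onto $\Lambda$. This forces $e\NH_n(\Z)e\cong\End_{\Lambda}(\Lambda)=\Lambda$, and since $\Lambda$ is a connected graded ring it has no nontrivial homogeneous idempotents; hence $e$ is primitive and $\NH_n(\Z)e\cong V\cong P(n)$. Finally, inside the matrix algebra $M_{n!}(\Lambda)$ the identity decomposes as a sum of $n!$ pairwise orthogonal primitive rank-one diagonal idempotents, each of whose projective summands is isomorphic to $P(n)$. Pulling this decomposition back to $\NH_n(\Z)$ and singling out $e$ produces the desired orthogonal primitive idempotents $e_{2,n},\dots,e_{n!,n}$ summing to $1-e$ and with $\NH_n(\Z)e_{j,n}\cong P(n)$ for every $j$.

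The main obstacle in making this self-contained is the pair of foundational inputs: the matrix-algebra identification $\NH_n(\Z)\cong M_{n!}(\Lambda)$, which really requires a PBW-type basis theorem for $\NH_n(\Z)$, and the explicit divided-difference identity $\partial_{w_0}(x_2x_3^{2}\cdots x_n^{n-1})=1$, which I would verify by induction on $n$ using a reduced expression for $w_0$. Once these are in place, everything else in the argument is essentially formal.
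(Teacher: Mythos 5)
The paper does not prove this lemma; it is stated purely as a citation to Rouquier \cite[Lemma~2.19, Proposition~2.21]{Rou2} and Khovanov--Lauda \cite{KL1}, so there is no internal proof to compare against. Your proposal correctly reconstructs the standard argument from those references: realize $\NH_n(\Z)$ faithfully on $V=\Z[x_1,\dots,x_n]$, identify it with $\End_\Lambda(V)\cong M_{n!}(\Lambda)$ over the ring of symmetric polynomials $\Lambda$, read off the unique indecomposable finitely generated projective $P(n)\cong V$, and check that $e_{[1,n]}$ acts as the $\Lambda$-linear projection onto $\Lambda\cdot 1$, hence is a primitive homogeneous idempotent of degree $0$; the remaining primitive idempotents $e_{2,n},\dots,e_{n!,n}$ come from the off-corner diagonal matrix units. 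The one thing you should fix is the sign convention on the divided difference: with the paper's relation $\tau_j x_{j+1}=x_j\tau_j+1$, the compatible operator is $\partial_i(f)=(f-s_if)/(x_{i+1}-x_i)$, not $(f-s_if)/(x_i-x_{i+1})$. With your sign, $\partial_{w_0}(x_2x_3^2\cdots x_n^{n-1})=(-1)^{\binom n2}$ rather than $1$, so $e_{[1,n]}$ would only square to $\pm e_{[1,n]}$; with the corrected sign the identity $\partial_{w_0}(x_2x_3^2\cdots x_n^{n-1})=1$ holds and idempotency follows as you describe. You might also note explicitly that an idempotent $E_{ii}\in M_{n!}(\Lambda)$ is primitive because $\Lambda$ is a domain (hence has no nontrivial idempotents), which is the precise reason the matrix-unit decomposition is into primitives over $\Z$ rather than over a field.
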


\begin{cor} We have $$
(n!)e_{[1,n]}\equiv 1\pmod{[\NH_n(\Z),\NH_n(\Z)]} .
$$
In particular, $e_{[1,n]}+[\NH_n(\Q),\NH_n(\Q)]$ is equal to the divided power $$\frac{1}{n!}+[\NH_n(\Q),\NH_n(\Q)]$$ in $\Tr(\NH_n(\Q))$.
\end{cor}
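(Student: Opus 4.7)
The plan is to reduce the identity to the fact that two idempotents defining isomorphic projective modules agree modulo the commutator subspace, and then exploit the idempotent decomposition of $1$ supplied by Lemma \ref{idempn}.

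First, I would record the following general fact: if $A$ is any ring and $e,f \in A$ are idempotents with $Ae \cong Af$ as left $A$-modules, then $e \equiv f \pmod{[A,A]}$. The standard verification is to pick mutually inverse $A$-linear maps $\phi : Ae \to Af$, $\psi : Af \to Ae$, set $u := \phi(e) \in eAf$ and $v := \psi(f) \in fAe$, and check that $uv = \psi(\phi(e)) = e$ and $vu = \phi(\psi(f)) = f$, so that $e - f = uv - vu \in [A,A]$. I would only sketch this; it is a well-known fact and the identification $\Hom_A(Ae, M) \cong eM$ makes it immediate.

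Next, I apply this to $A = \NH_n(\Z)$ and the idempotents from Lemma \ref{idempn}. That lemma provides the orthogonal decomposition
\[
1 = e_{[1,n]} + \sum_{j=2}^{n!} e_{j,n},
\]
together with the isomorphisms $\NH_n(\Z) e_{j,n} \cong \NH_n(\Z) e_{[1,n]}$ for every $2 \leq j \leq n!$. By the general fact, each summand $e_{j,n}$ is congruent to $e_{[1,n]}$ modulo $[\NH_n(\Z),\NH_n(\Z)]$, so summing gives
\[
1 \equiv e_{[1,n]} + (n! - 1)\, e_{[1,n]} = (n!)\, e_{[1,n]} \pmod{[\NH_n(\Z),\NH_n(\Z)]},
\]
which is the first assertion.

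For the second assertion, I would simply base-change to $\Q$: the commutator space $[\NH_n(\Q),\NH_n(\Q)]$ contains the image of $[\NH_n(\Z),\NH_n(\Z)]$, so the congruence $(n!)\, e_{[1,n]} \equiv 1$ remains valid in $\NH_n(\Q)$, and since $n!$ is invertible in $\Q$ I may divide through to obtain $e_{[1,n]} \equiv \tfrac{1}{n!} \pmod{[\NH_n(\Q),\NH_n(\Q)]}$. There is no real obstacle here; the only point requiring even a moment's thought is the Hom-space identification used to produce the elements $u,v$ witnessing $e \equiv f$, and that is entirely formal.
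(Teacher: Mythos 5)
Your proposal is correct and follows essentially the same route the paper intends: Lemma \ref{idempn} gives the orthogonal decomposition $1 = e_{[1,n]} + \sum_{j=2}^{n!} e_{j,n}$ into $n!$ primitive idempotents with pairwise isomorphic projectives, and (as the paper does in the proof of Lemma 3.9(3) via \cite[\S6, Exercise 14]{CR}) one notes that idempotents with isomorphic projectives agree in the cocenter. The only cosmetic difference is that you prove this last fact directly via the $u = \phi(e)$, $v = \psi(f)$ construction rather than invoking the CR exercise (which gives conjugacy $f = aea^{-1}$ and then $f - e = [ae, a^{-1}]$); both are standard and your version is, if anything, slightly more self-contained.
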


%

Let $1\leq u\leq v\leq n$, let $w[u,v]$ be the unique longest element in the symmetric group $\Sym_{\{u,u+1,\cdots,v\}}$. We use the reduced expression $s_u s_{u+1}\cdots s_{v-1} s_{u} s_{u+1}\cdots s_{v-2}\cdots s_{u}$ of $w[u,v]$ to define $\tau_{w[u,v]}:=\tau_u\tau_{u+1}\cdots\tau_{v-1}\tau_{u}\tau_{u+1}\cdots\tau_{v-2}\cdots\tau_{u}$, and set $$
e_{[u,v]}:=\begin{cases}\tau_{w[u,v]}x_{u+1}x_{u+2}^2\cdots x_{v}^{v-u} &\text{if $u<v$;}\\
1 &\text{if $u=v$.}
\end{cases}$$
We regard $e_{[u,v]}$ as an element in $\NH_n(\Z)$. Let $\alpha\in Q_n^+$ and $\nu\in I^\alpha$ such that $\nu_u=\nu_{u+1}=\cdots=\nu_v$. Then we regard $e_{[u,v]}e(\nu)$ as an element in the KLR algebra $\RR_\alpha$ (or the cyclotomic KLR algebra $\R[\alpha]$).

\begin{dfn} Let $k\geq 1$, $1\leq t\leq n-2$ and $t+2\leq l\leq n$. We define $$\begin{aligned}
X_{k,t,l}&:=\bigl(\prod_{j=2}^{t+1}x_j^{j-1}\bigr)\bigl(\prod_{j=t+2}^{l-1}x_j^{t+1}\bigr)\bigl(\prod_{j=l}^{n}x_j^{t}\bigr)x_n^{k-1}\times(\tau_1\cdots \tau_{n-1})\times(\tau_1\cdots \tau_{n-2})\\
&\qquad \times\cdots\times (\tau_1\cdots \tau_{n-t})\times (\tau_1\tau_2\cdots \tau_{l-(t+2)}),
\end{aligned}
$$
\end{dfn}

\begin{lem}\label{xlb} Let $n>2$, $k\geq 1$, $1\leq t\leq n-2$, $a\geq 2$ and $a(t+1)<l\leq n$.  Then for any $1\leq b\leq a$, we have $$
x_{l-b(t+1)}X_{k,t,l}\tau_{l-(t+1)}\equiv x_{l-(t+1)}X_{k,t,l}\tau_{l-(t+1)}\pmod{[\NH_n(\Z),\NH_n(\Z)]}.
$$
\end{lem}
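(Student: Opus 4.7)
I would prove the lemma by induction on $b$, with base case $b=1$ trivial. For the induction step, I want to establish the single ``shift'' identity
$$x_{l-b(t+1)} X_{k,t,l}\, \tau_{l-(t+1)} \;\equiv\; x_{l-(b-1)(t+1)} X_{k,t,l}\, \tau_{l-(t+1)} \pmod{[\NH_n(\Z),\NH_n(\Z)]}\qquad (b\ge 2),$$
since iterating gives the claim.

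First I would exploit cyclic equivalence in the cocenter: because $b\ge 2$ forces $l-b(t+1)\le l-2(t+1) < l-(t+1)-1$, the operator $\tau_{l-(t+1)}$ commutes with $x_{l-b(t+1)}$, so
$$x_{l-b(t+1)} X_{k,t,l}\, \tau_{l-(t+1)} \;\equiv\; X_{k,t,l}\, x_{l-b(t+1)}\, \tau_{l-(t+1)} \pmod{[\NH_n(\Z),\NH_n(\Z)]}.$$
Next, decompose $X_{k,t,l}=P\cdot U_1U_2\cdots U_{t+1}$, where $P$ is the polynomial prefix (which commutes with every $x_j$), $U_s:=\tau_1\tau_2\cdots\tau_{n-s}$ for $1\le s\le t$, and $U_{t+1}:=\tau_1\tau_2\cdots\tau_{l-(t+2)}$. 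Then I would push $x_{l-b(t+1)}$ leftward through $U_{t+1}, U_t, \ldots, U_1$ in turn, repeatedly using the nilHecke ``Leibniz rule'' $U_s\cdot x_r = x_{r+1}U_s - U_s^{(r)}$, where $U_s^{(r)}$ denotes $U_s$ with $\tau_r$ deleted. Each application shifts the $x$-index up by $1$, so after $t+1$ steps the leading term is $x_{l-b(t+1)+(t+1)}X_{k,t,l}\tau_{l-(t+1)} = x_{l-(b-1)(t+1)}X_{k,t,l}\tau_{l-(t+1)}$, exactly as required.

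What remains is a sum of error terms of the shape $E_{s,m} := P\cdot U_1\cdots U_{s-1}\, U_s^{(m)}\, U_{s+1}\cdots U_{t+1}\cdot \tau_{l-(t+1)}$. The strategy for showing $E_{s,m}\equiv 0$ is as follows: the deleted $\tau_m$ in $U_s^{(m)}$ combines with a copy of $\tau_m$ in the neighbouring chunk $U_{s\pm1}$, and after repeated use of the braid relations $\tau_i\tau_{i+1}\tau_i=\tau_{i+1}\tau_i\tau_{i+1}$ together with the commutations $\tau_i\tau_j=\tau_j\tau_i$ for $|i-j|>1$, one produces an adjacent $\tau_i^2=0$. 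For those error terms involving the trailing tail $U_{t+1}^{(m)}\cdot\tau_{l-(t+1)}$, one must first cycle $\tau_{l-(t+1)}$ past $P$; this is possible because the relevant block $x_{l-(t+1)}^{t+1}x_{l-t}^{t+1}$ inside $P$ is $s_{l-(t+1)}$-symmetric, so by the standard computation $\tau_i\cdot f(x)=f(x)\tau_i$ for $s_i$-symmetric $f$, this factor commutes with $\tau_{l-(t+1)}$. The rearrangement again produces a repeated $\tau_i$.

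The main obstacle will be the combinatorial bookkeeping in the last step: one must carefully trace which $\tau$ of $U_{s-1}$ or $U_{s+1}$ re-supplies the missing $\tau_m$, and which intermediate commutations and braid moves are needed to put them adjacent without leaving any ``stray'' $\tau_j$ blocking the cancellation. The small case $n=5,\,t=1,\,l=5$ (where the two error terms $U_1^{(2)}U_2$ and $U_1 U_2^{(1)}$ respectively vanish by an immediate $\tau_1^2=0$ and by a cycling + braid + $\tau_2^2=0$ argument) is the prototype that the general case is expected to follow, case split on whether $s\le t$ or $s=t+1$ and on the relative position of $m$ within the chunk $U_s$.
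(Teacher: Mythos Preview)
Your approach is essentially the mirror image of the paper's: you push $x_{l-b(t+1)}$ leftward through the chains $U_{t+1},\ldots,U_1$ (so the index climbs by $t+1$), whereas the paper pushes it rightward through $U_1,\ldots,U_t,(U_{t+1}\tau_{l-(t+1)})$ (so the index drops by $t+1$, going from the $b$ case to the $b{+}1$ case). In both directions one gets $t$ ``easy'' error terms and one ``hard'' one. The easy ones (your $E_{s,m}$ with $s\le t$) vanish \emph{on the nose}, not merely modulo commutators: writing $U_s^{(m)}=(\tau_1\cdots\tau_{m-1})(\tau_{m+1}\cdots\tau_{n-s})$ and commuting the two pieces, the factor $(\tau_1\cdots\tau_{m-1})U_{s+1}$ is zero because $(\tau_1\cdots\tau_p)(\tau_1\cdots\tau_q)=0$ whenever $p\le q$. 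So your description ``a copy of $\tau_m$ from the neighbour'' is slightly off---the mechanism is rather that the truncated left piece collides with the next full chain.

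The one genuine subtlety you under-sell is the $s=t+1$ error term, which is exactly the paper's $B_2$. A \emph{single} cycling of $\tau_{l-(t+1)}$ past $P$ is not enough in general: one must iterate $b-1$ times. At the start of the $j$-th round the extra $\tau$ has index $l-j(t+1)$; it must commute past $P$ (this needs $l-j(t+1)\ge t+2$, which is where the hypothesis $l>a(t+1)$ together with $j\le b-1\le a-1$ is used), then drop by $t$ through $U_1\cdots U_t$ via braid moves, then drop by $1$ through the broken tail, and be cycled back. Only at round $j=b-1$ does the index land exactly on the gap and produce $\tau^2=0$. Your prototype $n=5,\,t=1,\,l=5$ has $b=2$, so only one iteration is visible there; the general case needs the full loop, and the range check on $l-j(t+1)$ is the reason the bound $b\le a$ enters. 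With that iteration made explicit, your argument and the paper's coincide.
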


\begin{proof}We set $A_2:=x_{l-(t+1)}X_{k,t,l}\tau_{l-(t+1)}$. Assume that $A_2\equiv x_{l-b(t+1)}X_{k,t,l}\tau_{l-(t+1)}\pmod{[\NH_n(\Z),\NH_n(\Z)]}$ for some $1\leq b<a$. Then $$\begin{aligned}
A_2&\equiv\bigl(\prod_{j=2}^{t+1}x_j^{j-1}\bigr)\bigl(\prod_{j=t+2}^{l-1}x_j^{t+1}\bigr)\bigl(\prod_{j=l}^{n}x_j^{t}\bigr)x_n^{k-1}(\tau_1\cdots\tau_{l-b(t+1)-2}x_{l-b(t+1)} \tau_{l-b(t+1)-1}\cdots \tau_{n-1})\\
&\qquad(\tau_1\cdots \tau_{n-2})\cdots(\tau_1\cdots\tau_{n-t})(\tau_1\cdots\tau_{l-(t+1)})\\
&=\bigl(\prod_{j=2}^{t+1}x_j^{j-1}\bigr)\bigl(\prod_{j=t+2}^{l-1}x_j^{t+1}\bigr)\bigl(\prod_{j=l}^{n}x_j^{t}\bigr)x_n^{k-1}(\tau_1\cdots\tau_{l-b(t+1)-2}(x_{l-b(t+1)}\tau_{l-b(t+1)-1})\tau_{l-b(t+1)}\cdots \tau_{n-1})\\
&\qquad(\tau_1\cdots \tau_{n-2})\cdots(\tau_1\cdots\tau_{n-t})(\tau_1\cdots\tau_{l-(t+1)})\\
&=\bigl(\prod_{j=2}^{t+1}x_j^{j-1}\bigr)\bigl(\prod_{j=t+2}^{l-1}x_j^{t+1}\bigr)\bigl(\prod_{j=l}^{n}x_j^{t}\bigr)x_n^{k-1}(\tau_1\cdots \tau_{l-b(t+1)-2}(\tau_{l-b(t+1)-1}x_{l-b(t+1)-1}+1)\tau_{l-b(t+1)}\cdots \tau_{n-1})\\
&\qquad(\tau_1\cdots \tau_{n-2})\cdots(\tau_1\cdots\tau_{n-t})(\tau_1\cdots\tau_{l-(t+1)})\\
&=\bigl(\prod_{j=2}^{t+1}x_j^{j-1}\bigr)\bigl(\prod_{j=t+2}^{l-1}x_j^{t+1}\bigr)\bigl(\prod_{j=l}^{n}x_j^{t}\bigr)x_n^{k-1}\biggl\{\tau_1\cdots \tau_{l-b(t+1)-2}\tau_{l-b(t+1)-1}x_{l-b(t+1)-1}\tau_{l-b(t+1)}\cdots \tau_{n-1}\\
&\qquad +(\tau_1\cdots \tau_{l-b(t+1)-2})(\tau_{l-b(t+1)}\cdots \tau_{n-1})\biggr\}(\tau_1\cdots \tau_{n-2})\cdots(\tau_1\cdots\tau_{n-t})(\tau_1\cdots\tau_{l-(t+1)})\\
&=\bigl(\prod_{j=2}^{t+1}x_j^{j-1}\bigr)\bigl(\prod_{j=t+2}^{l-1}x_j^{t+1}\bigr)\bigl(\prod_{j=l}^{n}x_j^{t}\bigr)x_n^{k-1}\biggl\{\tau_1\cdots \tau_{l-b(t+1)-2}\tau_{l-b(t+1)-1}x_{l-b(t+1)-1}\tau_{l-b(t+1)}\cdots \tau_{n-1}\\
&\qquad +(\tau_{l-b(t+1)}\cdots \tau_{n-1})(\tau_1\cdots \tau_{l-b(t+1)-2})\biggr\}(\tau_{1}\cdots\tau_{n-2})\cdots(\tau_1\cdots\tau_{n-t})(\tau_1\cdots\tau_{l-(t+1)})\\
&=\bigl(\prod_{j=2}^{t+1}x_j^{j-1}\bigr)\bigl(\prod_{j=t+2}^{l-1}x_j^{t+1}\bigr)\bigl(\prod_{j=l}^{n}x_j^{t}\bigr)x_n^{k-1}(\tau_1\cdots \tau_{n-1})\bigl(\tau_1\cdots\tau_{l-b(t+1)-3}x_{l-b(t+1)-1}\tau_{l-b(t+1)-2}
\cdots\tau_{n-2}\bigr)\\
&\qquad (\tau_1\cdots \tau_{n-3})\cdots(\tau_1\cdots\tau_{n-t})(\tau_1\cdots\tau_{l-(t+1)})\\
&\quad\vdots\\
&=\bigl(\prod_{j=2}^{t+1}x_j^{j-1}\bigr)\bigl(\prod_{j=t+2}^{l-1}x_j^{t+1}\bigr)\bigl(\prod_{j=l}^{n}x_j^{t}\bigr)x_n^{k-1}(\tau_1\cdots \tau_{n-1})\cdots(\tau_1\cdots \tau_{n-t})x_{l-(b+1)t-b}(\tau_1\cdots\tau_{l-(t+1)})\\
&=\bigl(\prod_{j=2}^{t+1}x_j^{j-1}\bigr)\bigl(\prod_{j=t+2}^{l-1}x_j^{t+1}\bigr)\bigl(\prod_{j=l}^{n}x_j^{t}\bigr)x_n^{k-1}(\tau_1\cdots \tau_{n-1})(\tau_1\cdots \tau_{n-2})\cdots(\tau_1\cdots \tau_{n-t})\\
&\qquad (\tau_1\cdots \tau_{l-(b+1)t-b-2}x_{l-(b+1)t-b}\tau_{l-(b+1)t-b-1}\cdots \tau_{l-(t+1)})\\
&=\bigl(\prod_{j=2}^{t+1}x_j^{j-1}\bigr)\bigl(\prod_{j=t+2}^{l-1}x_j^{t+1}\bigr)\bigl(\prod_{j=l}^{n}x_j^{t}\bigr)x_n^{k-1}(\tau_1\cdots \tau_{n-1})(\tau_1\cdots \tau_{n-2})\cdots(\tau_1\cdots \tau_{n-t})\\
&\qquad \bigl(\tau_1\cdots \tau_{l-(b+1)t-b-2}(\tau_{l-(b+1)t-b-1}x_{l-(b+1)t-b-1}+1)\tau_{l-(b+1)t-b}\cdots \tau_{l-(t+1)}\bigr)\\
&=\bigl(\prod_{j=2}^{t+1}x_j^{j-1}\bigr)\bigl(\prod_{j=t+2}^{l-1}x_j^{t+1}\bigr)\bigl(\prod_{j=l}^{n}x_j^{t}\bigr)x_n^{k-1}(\tau_1\cdots \tau_{n-1})(\tau_1\cdots \tau_{n-2})\cdots(\tau_1\cdots \tau_{n-t})\\
&\qquad \biggl\{(\tau_1\cdots \tau_{l-(t+1)})x_{l-(b+1)(t+1)}+(\tau_1\cdots \tau_{l-(b+1)t-b-2}\tau_{l-(b+1)t-b}\cdots \tau_{l-(t+1)})\biggr\}\\
&\equiv  x_{l-(b+1)(t+1)}X_{k,t,l}\tau_{l-(t+1)}+\tau_{l-t-1}\bigl(\prod_{j=2}^{t+1}x_j^{j-1}\bigr)\bigl(\prod_{j=t+2}^{l-1}x_j^{t+1}\bigr)\bigl(\prod_{j=l}^{n}x_j^{t}\bigr)x_n^{k-1}\\
&\qquad (\tau_1\cdots \tau_{n-1})(\tau_1\cdots \tau_{n-2})\cdots(\tau_1\cdots \tau_{n-t})(\tau_1\cdots \tau_{l-(b+1)t-b-2}\tau_{l-(b+1)t-b}\cdots \tau_{l-t-2})\\
&\quad\pmod{[\NH_n(\Z),\NH_n(\Z)]}.
\end{aligned} $$

Let $B_2$ be the second term in the above right-hand side sum. Since $t+2\leq l-t-1\leq l-2$, we have \begin{align*}
B_2&:=\tau_{l-t-1}\bigl(\prod_{j=2}^{t+1}x_j^{j-1}\bigr)\bigl(\prod_{j=t+2}^{l-1}x_j^{t+1}\bigr)\bigl(\prod_{j=l}^{n}x_j^{t}\bigr)x_n^{k-1}(\tau_1\cdots \tau_{n-1})(\tau_1\cdots \tau_{n-2})\cdots(\tau_1\cdots \tau_{n-t})\\
&\qquad (\tau_1\cdots \tau_{l-(b+1)t-b-2}\tau_{l-(b+1)t-b}\cdots \tau_{l-t-2})\\
&=\bigl(\prod_{j=2}^{t+1}x_j^{j-1}\bigr)\bigl(\prod_{j=t+2}^{l-1}x_j^{t+1}\bigr)\bigl(\prod_{j=l}^{n}x_j^{t}\bigr)x_n^{k-1}\tau_{l-t-1}(\tau_1\cdots \tau_{n-1})(\tau_1\cdots \tau_{n-2})\cdots(\tau_1\cdots \tau_{n-t})\\
&\qquad (\tau_1\cdots \tau_{l-(b+1)t-b-2}\tau_{l-(b+1)t-b}\cdots \tau_{l-t-2})\\
&=\bigl(\prod_{j=2}^{t+1}x_j^{j-1}\bigr)\bigl(\prod_{j=t+2}^{l-1}x_j^{t+1}\bigr)\bigl(\prod_{j=l}^{n}x_j^{t}\bigr)x_n^{k-1}(\tau_1\cdots \tau_{n-1})\tau_{l-t-2}(\tau_1\cdots \tau_{n-2})\cdots(\tau_1\cdots \tau_{n-t})\\
&\qquad (\tau_1\cdots \tau_{l-(b+1)t-b-2}\tau_{l-(b+1)t-b}\cdots \tau_{l-t-2})\\
&\quad\vdots\\
&=\bigl(\prod_{j=2}^{t+1}x_j^{j-1}\bigr)\bigl(\prod_{j=t+2}^{l-1}x_j^{t+1}\bigr)\bigl(\prod_{j=l}^{n}x_j^{t}\bigr)x_n^{k-1}(\tau_1\cdots \tau_{n-1})(\tau_1\cdots \tau_{n-2})\cdots(\tau_1\cdots \tau_{n-t})\\
&\qquad \tau_{l-2t-1}(\tau_1\cdots \tau_{l-(b+1)t-b-2}\tau_{l-(b+1)t-b}\cdots \tau_{l-t-2})\\
&=\bigl(\prod_{j=2}^{t+1}x_j^{j-1}\bigr)\bigl(\prod_{j=t+2}^{l-1}x_j^{t+1}\bigr)\bigl(\prod_{j=l}^{n}x_j^{t}\bigr)x_n^{k-1}(\tau_1\cdots \tau_{n-1})(\tau_1\cdots \tau_{n-2})\cdots(\tau_1\cdots \tau_{n-t})\\
&\qquad (\tau_1\cdots \tau_{l-(b+1)t-b-2}\tau_{l-(b+1)t-b}\cdots \tau_{l-t-2})\tau_{l-2t-2}\\
&\equiv\tau_{l-2t-2}\bigl(\prod_{j=2}^{t+1}x_j^{j-1}\bigr)\bigl(\prod_{j=t+2}^{l-1}x_j^{t+1}\bigr)\bigl(\prod_{j=l}^{n}x_j^{t}\bigr)x_n^{k-1}(\tau_1\cdots \tau_{n-1})(\tau_1\cdots \tau_{n-2})\cdots(\tau_1\cdots \tau_{n-t})\\
&\qquad (\tau_1\cdots \tau_{l-(b+1)t-b-2}\tau_{l-(b+1)t-b}\cdots \tau_{l-t-2})\\
&\quad\vdots\\
&=\bigl(\prod_{j=2}^{t+1}x_j^{j-1}\bigr)\bigl(\prod_{j=t+2}^{l-1}x_j^{t+1}\bigr)\bigl(\prod_{j=l}^{n}x_j^{t}\bigr)x_n^{k-1}(\tau_1\cdots \tau_{n-1})(\tau_1\cdots \tau_{n-2})\cdots(\tau_1\cdots \tau_{n-t})\\
&\qquad \tau_{l-(b+1)t-b}(\tau_1\cdots \tau_{l-(b+1)t-b-2}\tau_{l-(b+1)t-b}\cdots \tau_{l-t-2})\equiv 0\pmod{[\NH_n(\Z),\NH_n(\Z)]},
\end{align*}
where we have used the fact that $\tau_{l-t-1}$ commutes with $(x_{l-t-1}x_{l-t})^{t+1}$ in the second equality above, and used the fact that $l-bt-b\geq t+2$ in the last equality. As a result, $$\begin{aligned}
A_2\equiv x_{l-(b+1)(t+1)}X_{k,t,l}\tau_{l-(t+1)}\pmod{[\NH_n(\Z),\NH_n(\Z)]}.
\end{aligned} $$
This proves the lemma.
\end{proof}

For any two pairs of integers $(a,b),(a',b')$, we write $(a,b)\leq (a',b')$ whenever either $a<a'$ or $a=a'$ and $b\leq b'$ (i.e. we use the lexicographic order). If $(a,b)\leq (a',b')$ and $(a,b)\neq (a',b')$, then we write $(a,b)<(a',b')$.

\begin{prop}\label{pre nil-Hecke relation}
Suppose $n\geq 2$ and $k\geq 1$. Let $$
X_{k,n}:=x_2x_3\cdots x_{n-1}x_n^k \tau_1\tau_2\cdots \tau_{n-1}\in\NH_n(\Z). $$
Then we have $X_{k,n}-e_{[1,n]}x_n^{k-1} \in [\NH_n(\Z),\NH_n(\Z)]$.
\end{prop}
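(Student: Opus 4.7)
The plan is to prove the proposition by recognizing that $X_{k,n}$ coincides with $X_{k,1,3}$ in the notation of the preceding definition (when $n\geq 3$; the case $n=2$ reduces to $X_{k,2}=x_2^k\tau_1\equiv\tau_1 x_2^k=e_{[1,2]}x_2^{k-1}$ by a single cyclic permutation). We then iteratively transform $X_{k,1,3}$ through the family $\{X_{k,t,l}\}$, using Lemma \ref{xlb} together with the cyclic property in $\Tr(\NH_n(\Z))$ and the defining relations $\tau_r x_{r+1}=x_r\tau_r+1$, $x_{r+1}\tau_r=\tau_r x_r+1$, and the braid relation, until we reach the form $e_{[1,n]}x_n^{k-1}$.

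More precisely, I would set up a double induction on the pair $(t,l)$, ordered so that $l$ first sweeps from $t+2$ up to $n$ (the \emph{intra-row moves}) and then $t$ is incremented with $l$ reset, what I would call the \emph{row-jump moves}. At the top of the induction we arrive at $X_{k,n-2,n}$, whose $\tau$-word has all rows of the reduced expression of $w_0$ except the final singleton $\tau_1$, and whose monomial is $x_2x_3^2\cdots x_{n-1}^{n-2}x_n^{n+k-3}$, which differs from $x_2x_3^2\cdots x_n^{n-1}x_n^{k-1}$ by one power of $x_n$; a final calculation then exchanges the missing $x_n$ for the missing $\tau_1$.

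For each intra-row step $X_{k,t,l}\leadsto X_{k,t,l+1}$ I would first use cyclic permutation to move the rightmost $\tau$-factor across the expression, so that the result has the shape $x_{l-(t+1)}\cdot X_{k,t,l}\cdot \tau_{l-(t+1)}$ (up to harmless rearrangements of commuting $x$'s). Lemma \ref{xlb} then lets me replace the leading $x_{l-(t+1)}$ by $x_{l-a(t+1)}$ for the largest admissible $a$, and a short sequence of applications of $\tau_r x_{r+1}=x_r\tau_r+1$ plus the braid relation absorbs the shifted $x$-power into growing a new $\tau$-factor at the right end, producing $X_{k,t,l+1}$. For the row-jump $X_{k,t,n}\leadsto X_{k,t+1,t+3}$ the argument is analogous but uses that when $l=n$ we can complete the next partial row $\tau_1\cdots\tau_{n-t-1}$ into a full row $\tau_1\cdots\tau_{n-t-1}$ at the cost of consuming one power of $x_n$, again re-absorbing via the commutation relations and cyclic permutation.

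The main obstacle I anticipate is the bookkeeping: each application of $\tau_r x_{r+1}=x_r\tau_r+1$ produces an error term, and one must show that all such error terms either vanish in the cocenter by cyclic permutation and braid relations (exactly as in the auxiliary computation of $B_2$ inside the proof of Lemma \ref{xlb}) or combine into the next stage $X_{k,t,l+1}$. Lemma \ref{xlb} is the crucial tool that controls these errors for the intra-row moves; the row-jump moves require the same kind of ``telescoping'' argument as was used there, and it is here that the choice of the maximal $b=a$ in Lemma \ref{xlb} becomes essential to make the $x$-degree work out.
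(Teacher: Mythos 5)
Your proposal correctly identifies the architecture of the paper's argument: the base case $X_{k,n}=X_{k,1,3}$, the double induction on $(t,l)$ with intra-row and row-jump moves, the role of Lemma~\ref{xlb}, and the final computation at $X_{k,n-2,n}$ that trades the last $x_n$-power for the missing $\tau_1$. However, the core inductive step is misdescribed in a way that would derail the proof.

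The actual transition from $X_{k,t,l}$ proceeds by inserting the identity $1=x_{l-t}\tau_{l-(t+1)}-\tau_{l-(t+1)}x_{l-(t+1)}$ on the right and splitting into two pieces $A_1-A_2$. The piece $A_1$ is the one that telescopes (via repeated use of $x_{j+1}\tau_j=\tau_jx_j+1$) into $X_{k,t,l+1}$ when $l<n$, or into $X_{k,t+1,t+3}$ when $l=n$. The other piece, which after cyclic permutation takes the form $A_2=x_{l-(t+1)}X_{k,t,l}\tau_{l-(t+1)}$, is the \emph{error term}, and the entire role of Lemma~\ref{xlb} is to prove $A_2\equiv 0$ in the cocenter (by shifting $x_{l-(t+1)}$ down to $x_{l-a(t+1)}$ with $1\le l-a(t+1)<t+1$, then moving $\tau_{l-(t+1)}$ across the $x$-monomial and producing a $\tau_1^2$ or adjacent-$\tau$ collision). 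Your writeup instead claims that a cyclic permutation alone turns $X_{k,t,l}$ into $x_{l-(t+1)}X_{k,t,l}\tau_{l-(t+1)}$ (false; the rightmost $\tau$-factor is $\tau_{l-(t+2)}$, and moving it does not reproduce $X_{k,t,l}$ flanked by $x$ and $\tau$), and then that Lemma~\ref{xlb} converts this quantity into $X_{k,t,l+1}$ (also false; the lemma shows it vanishes). You have swapped the roles of the ``main'' and ``error'' terms and assigned the wrong function to the key lemma. Until that decomposition $X_{k,t,l}\equiv A_1-A_2$ is set up explicitly, with $A_1\to X_{k,t,l+1}$ and $A_2\to 0$ argued separately, the induction does not close.
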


\begin{proof} We claim that for any $1\leq t\leq n-2$ and $t+2\leq l\leq n$, \begin{equation}\label{induction1}
X_{k,n}\equiv X_{k,t,l} \pmod{[\NH_n(\Z),\NH_n(\Z)]}.
\end{equation}
We use induction on $(t,l)$ to prove (\ref{induction1}). If $t=1$ and $l=t+2=3$, it is trivial to see that $X_{k,n}=X_{k,1,3}$. Suppose (\ref{induction1}) holds for pair $(t,l)$ with $t<n-2$. We divide the proof into two cases.

\medskip
{\it Case 1.} Assume $t+2=l$ and $t<n-2$. Then we have $$\begin{aligned}
&\quad\, X_{k,n}\equiv X_{k,t,t+2}\\
&\equiv \bigl(\prod_{j=2}^{t+1}x_j^{j-1}\bigr)\bigl(\prod_{j=t+2}^{n}x_j^{t}\bigr)x_n^{k-1}(\tau_1\cdots \tau_{n-1})(\tau_1\cdots \tau_{n-2})\cdots (\tau_1\cdots \tau_{n-t})\\
&\qquad\qquad \pmod{[\NH_n(\Z),\NH_n(\Z)]}.
\end{aligned}$$

We can compute $$\begin{aligned}
&\quad\,\bigl(\prod_{j=2}^{t+1}x_j^{j-1}\bigr)\bigl(\prod_{j=t+2}^{n}x_j^{t}\bigr)x_n^{k-1}(\tau_1\cdots \tau_{n-1})(\tau_1\cdots \tau_{n-2})\cdots (\tau_1\cdots \tau_{n-t})\\
&=\bigl(\prod_{j=2}^{t+1}x_j^{j-1}\bigr)\bigl(\prod_{j=t+2}^{n}x_j^{t}\bigr)x_n^{k-1}(\tau_1\cdots \tau_{n-1})(\tau_1\cdots \tau_{n-2})\cdots (\tau_1\cdots \tau_{n-t})(x_2\tau_1-\tau_1x_1)\\
&\equiv \bigl(\prod_{j=2}^{t+1}x_j^{j-1}\bigr)\bigl(\prod_{j=t+2}^{n}x_j^{t}\bigr)x_n^{k-1}(\tau_1\cdots \tau_{n-1})(\tau_1\cdots \tau_{n-2})\cdots (\tau_1\cdots \tau_{n-t})x_2\tau_1\\
&\quad\,-\tau_1x_1\bigl(\prod_{j=2}^{t+1}x_j^{j-1}\bigr)\bigl(\prod_{j=t+2}^{n}x_j^{t}\bigr)x_n^{k-1}(\tau_1\cdots \tau_{n-1})(\tau_1\cdots \tau_{n-2})\cdots (\tau_1\cdots \tau_{n-t})\\
&\equiv \bigl(\prod_{j=2}^{t+1}x_j^{j-1}\bigr)\bigl(\prod_{j=t+2}^{n}x_j^{t}\bigr)x_n^{k-1}(\tau_1\cdots \tau_{n-1})(\tau_1\cdots \tau_{n-2})\cdots (\tau_1\cdots \tau_{n-t})x_2\tau_1\\
&\quad\,-x_1\bigl(\prod_{j=2}^{t+1}x_j^{j-1}\bigr)\bigl(\prod_{j=t+2}^{n}x_j^{t}\bigr)x_n^{k-1}(\tau_1\tau_1\cdots \tau_{n-1})(\tau_1\cdots \tau_{n-2})\cdots (\tau_1\cdots \tau_{n-t})\\
&\equiv \bigl(\prod_{j=2}^{t+1}x_j^{j-1}\bigr)\bigl(\prod_{j=t+2}^{n}x_j^{t}\bigr)x_n^{k-1}(\tau_1\cdots \tau_{n-1})(\tau_1\cdots \tau_{n-2})\cdots (\tau_1\cdots \tau_{n-t})x_2\tau_1\\
&\qquad\pmod{[\NH_n(\Z),\NH_n(\Z)]},
\end{aligned}$$
where we have used the fact that $\tau_1$ commutes with $x_1x_2$ in the second last equality.

In a similar way, we can get that
$$\begin{aligned}
&\quad\, \bigl(\prod_{j=2}^{t+1}x_j^{j-1}\bigr)\bigl(\prod_{j=t+2}^{n}x_j^{t}\bigr)x_n^{k-1}(\tau_1\cdots \tau_{n-1})(\tau_1\cdots \tau_{n-2})\cdots (\tau_1\cdots \tau_{n-t})x_2\tau_1\\
&=\bigl(\prod_{j=2}^{t+1}x_j^{j-1}\bigr)\bigl(\prod_{j=t+2}^{n}x_j^{t}\bigr)x_n^{k-1}(\tau_1\cdots \tau_{n-1})(\tau_1\cdots \tau_{n-2})\cdots (\tau_1\tau_2x_2\tau_3\cdots \tau_{n-t}\tau_1)\\
&=\bigl(\prod_{j=2}^{t+1}x_j^{j-1}\bigr)\bigl(\prod_{j=t+2}^{n}x_j^{t}\bigr)x_n^{k-1}(\tau_1\cdots \tau_{n-1})(\tau_1\cdots \tau_{n-2})\cdots \bigl(\tau_1(x_3\tau_2-1)\tau_3\cdots \tau_{n-t}\tau_1\bigr)\\
&=\bigl(\prod_{j=2}^{t+1}x_j^{j-1}\bigr)\bigl(\prod_{j=t+2}^{n}x_j^{t}\bigr)x_n^{k-1}(\tau_1\cdots \tau_{n-1})(\tau_1\cdots \tau_{n-2})\cdots \bigl(\tau_1x_3\tau_2\tau_3\cdots \tau_{n-t}\tau_1\bigr)\\
&=\bigl(\prod_{j=2}^{t+1}x_j^{j-1}\bigr)\bigl(\prod_{j=t+2}^{n}x_j^{t}\bigr)x_n^{k-1}(\tau_1\cdots \tau_{n-1})(\tau_1\cdots \tau_{n-2})\cdots
\bigl(\tau_1\tau_2(\tau_3x_3)\tau_4\cdots \tau_{n-t+1}\bigr)\bigl(\tau_2\tau_1\tau_2\tau_3\cdots \tau_{n-t}\bigr)\\
&=\bigl(\prod_{j=2}^{t+1}x_j^{j-1}\bigr)\bigl(\prod_{j=t+2}^{n}x_j^{t}\bigr)x_n^{k-1}(\tau_1\cdots \tau_{n-1})(\tau_1\cdots \tau_{n-2})\cdots
\bigl(\tau_1\tau_2(x_4\tau_3-1)\tau_4\cdots \tau_{n-t+1}\bigr)\bigl(\tau_2\tau_1\tau_2\tau_3\cdots \tau_{n-t}\bigr)\\
&\quad\vdots\\
&=\bigl(\prod_{j=2}^{t+1}x_j^{j-1}\bigr)\bigl(\prod_{j=t+2}^{n}x_j^{t}\bigr)x_n^{k-1}x_{t+2}\tau_{t+1}(\tau_1\cdots \tau_{n-1})(\tau_1\cdots \tau_{n-2})\cdots
\bigl(\tau_1\tau_2\tau_3\tau_4\cdots \tau_{n-t+1}\bigr)\bigl(\tau_1\tau_2\tau_3\cdots \tau_{n-t}\bigr)\\
&=\bigl(\prod_{j=2}^{t+1}x_j^{j-1}\bigr)x_{t+2}^{t+1}\bigl(\prod_{j=t+3}^{n}x_j^{t}\bigr)x_n^{k-1}(\tau_1\cdots \tau_{n-1})(\tau_1\cdots \tau_{n-2})\cdots
\bigl(\tau_1\tau_2\tau_3\tau_4\cdots \tau_{n-t+1}\bigr)\bigl(\tau_1\tau_2\tau_3\cdots \tau_{n-t}\bigr)\tau_1\\
&\equiv X_{k,t,t+3}\pmod{[\NH_n(\Z),\NH_n(\Z)].}\\
\end{aligned}$$

\medskip
{\it Case 2.} Assume $t+2<l\leq n$ and $t<n-2$. In this case, we have $$\begin{aligned}
&\quad\,X_{k,n}\equiv X_{k,t,l}\\
&=\bigl(\prod_{j=2}^{t+1}x_j^{j-1}\bigr)\bigl(\prod_{j=t+2}^{l-1}x_j^{t+1}\bigr)\bigl(\prod_{j=l}^{n}x_j^{t}\bigr)x_n^{k-1}
(\tau_1\cdots \tau_{n-1})(\tau_1\cdots \tau_{n-2})\cdots(\tau_1\cdots \tau_{n-t})(\tau_1\cdots \tau_{l-(t+2)})\\
&=\bigl(\prod_{j=2}^{t+1}x_j^{j-1}\bigr)\bigl(\prod_{j=t+2}^{l-1}x_j^{t+1}\bigr)\bigl(\prod_{j=l}^{n}x_j^{t}\bigr)x_n^{k-1}
(\tau_1\cdots \tau_{n-1})(\tau_1\cdots \tau_{n-2})\cdots(\tau_1\cdots \tau_{n-t})(\tau_1\cdots \tau_{l-(t+2)})\\
&\qquad (x_{l-t}\tau_{l-(t+1)}-\tau_{l-(t+1)}x_{l-(t+1)})\\
&\equiv \bigl(\prod_{j=2}^{t+1}x_j^{j-1}\bigr)\bigl(\prod_{j=t+2}^{l-1}x_j^{t+1}\bigr)\bigl(\prod_{j=l}^{n}x_j^{t}\bigr)x_n^{k-1}
(\tau_1\cdots \tau_{n-1})(\tau_1\cdots \tau_{n-2})\cdots(\tau_1\cdots\tau_{l-t}x_{l-t}\tau_{l-t+1}\cdots\tau_{n-t})\\
&\qquad (\tau_1\cdots \tau_{l-(t+1)})-x_{l-(t+1)}\bigl(\prod_{j=2}^{t+1}x_j^{j-1}\bigr)\bigl(\prod_{j=t+2}^{l-1}x_j^{t+1}\bigr)\bigl(\prod_{j=l}^{n}x_j^{t}\bigr)x_n^{k-1}(\tau_1\cdots \tau_{n-1})(\tau_1\cdots \tau_{n-2})\\
&\qquad\qquad\cdots(\tau_1\cdots\tau_{n-t})(\tau_1\cdots\tau_{l-(t+1)})\pmod{[\NH_n(\Z),\NH_n(\Z)]}.
\end{aligned}$$

We denote by $A_1, A_2$ the first term and the second term in the above last equality respectively. That is, $$
X_{k,n}\equiv X_{k,t,l}\equiv A_1-A_2\pmod{[\NH_n(\Z),\NH_n(\Z)]} .
$$
Now $$\begin{aligned}
A_1 &\equiv \bigl(\prod_{j=2}^{t+1}x_j^{j-1}\bigr)\bigl(\prod_{j=t+2}^{l-1}x_j^{t+1}\bigr)\bigl(\prod_{j=l}^{n}x_j^{t}\bigr)x_n^{k-1}(\tau_1\cdots \tau_{n-1})(\tau_1\cdots \tau_{n-2})\cdots \\
&\qquad (\tau_1\cdots \tau_{l-t}x_{l-t}\tau_{l-t+1}\cdots \tau_{n-t})(\tau_1\cdots \tau_{l-(t+1)})\\
&=\bigl(\prod_{j=2}^{t+1}x_j^{j-1}\bigr)\bigl(\prod_{j=t+2}^{l-1}x_j^{t+1}\bigr)\bigl(\prod_{j=l}^{n}x_j^{t}\bigr)x_n^{k-1}(\tau_1\cdots \tau_{n-1})(\tau_1\cdots \tau_{n-2})\cdots\\
 &\qquad \bigl(\tau_1\cdots\tau_{l-t-1}(x_{l-t+1}\tau_{l-t}-1)\tau_{l-t+1}\cdots \tau_{n-t}\bigr)(\tau_1\cdots \tau_{l-(t+1)})\\
&=\bigl(\prod_{j=2}^{t+1}x_j^{j-1}\bigr)\bigl(\prod_{j=t+2}^{l-1}x_j^{t+1}\bigr)\bigl(\prod_{j=l}^{n}x_j^{t}\bigr)x_n^{k-1}(\tau_1\cdots \tau_{n-1})(\tau_1\cdots \tau_{n-2})\cdots \\
&\qquad (\tau_1\cdots \tau_{l-t}x_{l-t}\tau_{l-t+1}\cdots \tau_{n-t})(\tau_1\cdots \tau_{l-(t+1)})\\
&\,\,\,-\bigl(\prod_{j=2}^{t+1}x_j^{j-1}\bigr)\bigl(\prod_{j=t+2}^{l-1}x_j^{t+1}\bigr)\bigl(\prod_{j=l}^{n}x_j^{t}\bigr)x_n^{k-1}(\tau_1\cdots \tau_{n-1})(\tau_1\cdots \tau_{n-2})\cdots \\
&\qquad(\tau_1\cdots \tau_{l-t-1}\tau_{l-t+1}\cdots\tau_{n-t})(\tau_1\cdots \tau_{l-(t+1)})\\
&=\bigl(\prod_{j=2}^{t+1}x_j^{j-1}\bigr)\bigl(\prod_{j=t+2}^{l-1}x_j^{t+1}\bigr)\bigl(\prod_{j=l}^{n}x_j^{t}\bigr)x_n^{k-1}(\tau_1\cdots \tau_{n-1})(\tau_1\cdots \tau_{n-2})\cdots \\
&\qquad (\tau_1\cdots \tau_{l-t}x_{l-t}\tau_{l-t+1}\cdots \tau_{n-t})(\tau_1\cdots \tau_{l-(t+1)})\\
&\,\,\,-\bigl(\prod_{j=2}^{t+1}x_j^{j-1}\bigr)\bigl(\prod_{j=t+2}^{l-1}x_j^{t+1}\bigr)\bigl(\prod_{j=l}^{n}x_j^{t}\bigr)x_n^{k-1}(\tau_1\cdots \tau_{n-1})(\tau_1\cdots \tau_{n-2})\cdots \\
&\qquad(\tau_{l-t+1}\cdots\tau_{n-t})(\tau_1\cdots \tau_{l-t-1})(\tau_1\cdots \tau_{l-(t+1)})\\
&=\bigl(\prod_{j=2}^{t+1}x_j^{j-1}\bigr)\bigl(\prod_{j=t+2}^{l-1}x_j^{t+1}\bigr)\bigl(\prod_{j=l}^{n}x_j^{t}\bigr)x_n^{k-1}(\tau_1\cdots \tau_{n-1})(\tau_1\cdots \tau_{n-2})\cdots \\
&\qquad (\tau_1\cdots \tau_{l-t}x_{l-t}\tau_{l-t+1}\cdots \tau_{n-t})(\tau_1\cdots \tau_{l-(t+1)}), \end{aligned} $$
where in the last equality we used the fact that $$
(\tau_1\cdots \tau_{l-t-1})(\tau_1\cdots \tau_{l-(t+1)})=(\tau_2\cdots\tau_{l-t-1})(\tau_1\cdots \tau_{l-t-1})\tau_{l-(t+1)}=0 .
$$
Thus, in a similar way we can get that $$\begin{aligned}
A_1&=\bigl(\prod_{j=2}^{t+1}x_j^{j-1}\bigr)\bigl(\prod_{j=t+2}^{l-1}x_j^{t+1}\bigr)\bigl(\prod_{j=l}^{n}x_j^{t}\bigr)x_n^{k-1}(\tau_1\cdots \tau_{n-1})(\tau_1\cdots \tau_{n-2})\cdots \\
&\qquad (\tau_1\cdots \tau_{l-t}x_{l-t}\tau_{l-t+1}\cdots \tau_{n-t})(\tau_1\cdots \tau_{l-(t+1)})\\
&=\bigl(\prod_{j=2}^{t+1}x_j^{j-1}\bigr)\bigl(\prod_{j=t+2}^{l-1}x_j^{t+1}\bigr)\bigl(\prod_{j=l}^{n}x_j^{t}\bigr)x_n^{k-1}(\tau_1\cdots \tau_{n-1})(\tau_1\cdots \tau_{n-2})\cdots \\
&\qquad (\tau_1\cdots \tau_{l-t}(x_{l-t}\tau_{l-t+1})\cdots \tau_{n-t})(\tau_1\cdots \tau_{l-(t+1)})\\
&=\bigl(\prod_{j=2}^{t+1}x_j^{j-1}\bigr)\bigl(\prod_{j=t+2}^{l-1}x_j^{t+1}\bigr)\bigl(\prod_{j=l}^{n}x_j^{t}\bigr)x_n^{k-1}(\tau_1\cdots \tau_{n-1})(\tau_1\cdots \tau_{n-2})\cdots \\
&\qquad \bigl(\tau_1\cdots \tau_{l-t-1}(x_{l-t+1}\tau_{l-t}-1)\tau_{l-t+1}\cdots \tau_{n-t}\bigr)(\tau_1\cdots \tau_{l-(t+1)})\end{aligned} $$

$$\begin{aligned}
&=\bigl(\prod_{j=2}^{t+1}x_j^{j-1}\bigr)\bigl(\prod_{j=t+2}^{l-1}x_j^{t+1}\bigr)\bigl(\prod_{j=l}^{n}x_j^{t}\bigr)x_n^{k-1}(\tau_1\cdots \tau_{n-1})(\tau_1\cdots \tau_{n-2})\cdots \\
&\qquad \bigl(\tau_1\cdots \tau_{l-t-1}x_{l-t+1}\tau_{l-t}\tau_{l-t+1}\cdots \tau_{n-t}\bigr)(\tau_1\cdots \tau_{l-(t+1)})\\
&=\bigl(\prod_{j=2}^{t+1}x_j^{j-1}\bigr)\bigl(\prod_{j=t+2}^{l-1}x_j^{t+1}\bigr)\bigl(\prod_{j=l}^{n}x_j^{t}\bigr)x_n^{k-1}(\tau_1\cdots \tau_{n-1})(\tau_1\cdots \tau_{n-2})\cdots \\
&\qquad \bigl(\tau_1\cdots \tau_{l-t+1}x_{l-t+1}\tau_{l-t+2}\cdots \tau_{n-t+1}\bigr)(\tau_1\tau_2\cdots \tau_{n-t})(\tau_1\cdots \tau_{l-(t+1)})\\
&\quad\vdots\\
&=\bigl(\prod_{j=2}^{t+1}x_j^{j-1}\bigr)\bigl(\prod_{j=t+2}^{l-1}x_j^{t+1}\bigr)\bigl(\prod_{j=l}^{n}x_j^{t}\bigr)x_n^{k-1}x_l(\tau_1\cdots \tau_{n-1})(\tau_1\cdots \tau_{n-2})\cdots \\
&\qquad \bigl(\tau_1\cdots\cdots \tau_{n-t}\bigr)(\tau_1\cdots \tau_{l-(t+1)})\\
&=\bigl(\prod_{j=2}^{t+1}x_j^{j-1}\bigr)\bigl(\prod_{j=t+2}^{l}x_j^{t+1}\bigr)\bigl(\prod_{j=l+1}^{n}x_j^{t}\bigr)x_n^{k-1}(\tau_1\cdots \tau_{n-1})(\tau_1\cdots \tau_{n-2})\cdots \\
&\qquad \bigl(\tau_1\cdots\cdots \tau_{n-t}\bigr)(\tau_1\cdots \tau_{l-(t+1)}).
\end{aligned}$$
By definition, if $l<n$, then the above equality means $A_1\equiv X_{k,t,l+1}\pmod{[\NH_n(\Z),\NH_n(\Z)]}$; while if $l=n$, the above equality means $A_1\equiv X_{k,t+1,t+3}\pmod{[\NH_n(\Z),\NH_n(\Z)]}$.
It remains to show that \begin{equation}\label{a2}
\begin{matrix} A_2:=x_{l-(t+1)}X_{k,t,l}\tau_{l-(t+1)}\in [\NH_n(\Z),\NH_n(\Z)].\end{matrix}
\end{equation}

To this end, we first assume $l-(t+1)\leq (t+1)$. In this case, we have
\begin{align*}
A_2&:=x_{l-(t+1)}\bigl(\prod_{j=2}^{t+1}x_j^{j-1}\bigr)\bigl(\prod_{j=t+2}^{l-1}x_j^{t+1}\bigr)\bigl(\prod_{j=l}^{n}x_j^{t}\bigr)x_n^{k-1}(\tau_1\cdots \tau_{n-1})(\tau_1\cdots \tau_{n-2})\\
&\qquad\cdots(\tau_1\cdots\tau_{n-t})(\tau_1\cdots\tau_{l-(t+1)})\\
&\equiv \tau_{l-(t+1)} x_{l-(t+1)}\bigl(\prod_{j=2}^{t+1}x_j^{j-1}\bigr)\bigl(\prod_{j=t+2}^{l-1}x_j^{t+1}\bigr)\bigl(\prod_{j=l}^{n}x_j^{t}\bigr)x_n^{k-1}(\tau_1\cdots \tau_{n-1})(\tau_1\cdots \tau_{n-2})\\
&\qquad\cdots(\tau_1\cdots\tau_{n-t})(\tau_1\cdots\tau_{l-t-2})\\
&\equiv x_{l-(t+1)}\bigl(\prod_{j=2}^{t+1}x_j^{j-1}\bigr)\bigl(\prod_{j=t+2}^{l-1}x_j^{t+1}\bigr)\bigl(\prod_{j=l}^{n}x_j^{t}\bigr)x_n^{k-1}\tau_{l-(t+1)}(\tau_1\cdots \tau_{n-1})(\tau_1\cdots \tau_{n-2})\\
&\qquad\cdots(\tau_1\cdots\tau_{n-t})(\tau_1\cdots\tau_{l-t-2})\\
&\equiv x_{l-(t+1)}\bigl(\prod_{j=2}^{t+1}x_j^{j-1}\bigr)\bigl(\prod_{j=t+2}^{l-1}x_j^{t+1}\bigr)\bigl(\prod_{j=l}^{n}x_j^{t}\bigr)x_n^{k-1}(\tau_1\cdots \tau_{n-1})\cdots(\tau_1\cdots \tau_{n-l+t+2})\\
&\qquad\tau_1(\tau_1\cdots \tau_{n-l+t+3})\cdots(\tau_1\cdots\tau_{n-t})(\tau_1\cdots\tau_{l-t-2})\\
&\equiv 0\pmod{[\NH_n(\Z),\NH_n(\Z)]},
\end{align*}
where we have used the fact that $\tau_{l-(t+1)}$ commutes with $(x_{l-(t+1)}x_{l-t})^{l-t-1}$ in the third equality above. This proves (\ref{a2}) in this case.

Now we assume that $l>2(t+1)$. For any $a\geq 2$ satisfying $a(t+1)<l\leq n$, we can apply Lemma \ref{xlb} to get that for any $1\leq b\leq a$, $$
A_2\equiv x_{l-b(t+1)}X_{k,t,l}\tau_{l-(t+1)}\pmod{[\NH_n(\Z),\NH_n(\Z)]}.
$$
We choose $a\geq 2$ be such that $1\leq l-a(t+1)<t+1$. We can compute \begin{align*}
A_2&\equiv \bigl(x_{l-a(t+1)}\prod_{j=2}^{t+1}x_j^{j-1}\bigr)\bigl(\prod_{j=t+2}^{l-1}x_j^{t+1}\bigr)\bigl(\prod_{j=l}^{n}x_j^{t}\bigr)x_n^{k-1}(\tau_1\cdots \tau_{n-1})(\tau_1\cdots \tau_{n-2})\cdots\\
&\qquad (\tau_1\cdots \tau_{n-t})(\tau_1\cdots \tau_{l-t-2}\tau_{l-(t+1)})\\
&\equiv \tau_{l-(t+1)}\bigl(x_{l-a(t+1)}\prod_{j=2}^{t+1}x_j^{j-1}\bigr)\bigl(\prod_{j=t+2}^{l-1}x_j^{t+1}\bigr)\bigl(\prod_{j=l}^{n}x_j^{t}\bigr)x_n^{k-1}(\tau_1\cdots \tau_{n-1})(\tau_1\cdots \tau_{n-2})\cdots\\
&\qquad (\tau_1\cdots \tau_{n-t})(\tau_1\cdots \tau_{l-t-2})\pmod{[\NH_n(\Z),\NH_n(\Z)]}.
\end{align*}
Note that since $a>1$, $t+2\leq l-(t+1)<l-1$ and $\tau_{l-(t+1)}$ commutes with $\prod_{j=t+2}^{l-1}x_j^{t+1}$. Therefore, we have



\begin{align*}
A_2&\equiv\bigl(x_{l-a(t+1)}\prod_{j=2}^{t+1}x_j^{j-1}\bigr)\bigl(\prod_{j=t+2}^{l-1}x_j^{t+1}\bigr)\bigl(\prod_{j=l}^{n}x_j^{t}\bigr)x_n^{k-1}(\tau_1\cdots \tau_{n-1})(\tau_1\cdots \tau_{n-2})\cdots\\
&\qquad (\tau_1\cdots \tau_{n-t})(\tau_1\cdots \tau_{l-t-2})\tau_{l-2(t+1)}\\
&\equiv \tau_{l-2(t+1)}\bigl(x_{l-a(t+1)}\prod_{j=2}^{t+1}x_j^{j-1}\bigr)\bigl(\prod_{j=t+2}^{l-1}x_j^{t+1}\bigr)\bigl(\prod_{j=l}^{n}x_j^{t}\bigr)x_n^{k-1}(\tau_1\cdots \tau_{n-1})(\tau_1\cdots \tau_{n-2})\cdots\\
&\qquad (\tau_1\cdots \tau_{n-t})(\tau_1\cdots \tau_{l-t-2})\\
&\quad\vdots\\
&\equiv \tau_{l-a(t+1)}\bigl(x_{l-a(t+1)}\prod_{j=2}^{t+1}x_j^{j-1}\bigr)\bigl(\prod_{j=t+2}^{l-1}x_j^{t+1}\bigr)\bigl(\prod_{j=l}^{n}x_j^{t}\bigr)x_n^{k-1}(\tau_1\cdots \tau_{n-1})(\tau_1\cdots \tau_{n-2})\cdots\\
&\qquad (\tau_1\cdots \tau_{n-t})(\tau_1\cdots \tau_{l-t-2})\\
&\equiv \bigl(x_{l-a(t+1)}\prod_{j=2}^{t+1}x_j^{j-1}\bigr)\bigl(\prod_{j=t+2}^{l-1}x_j^{t+1}\bigr)\bigl(\prod_{j=l}^{n}x_j^{t}\bigr)x_n^{k-1}(\tau_1\cdots \tau_{n-1})\cdots(\tau_1\cdots \tau_{n-l+a(t+1)+1})\\
&\qquad \cdots\tau_1(\tau_1\cdots \tau_{n-l+a(t+1)})(\tau_1\cdots \tau_{n-t})(\tau_1\cdots \tau_{l-t-2})\equiv 0\pmod{[\NH_n(\Z),\NH_n(\Z)]},
\end{align*}
where in the last equality above we have used the fact that $\tau_{l-a(t+1)}$ commutes with $\bigl(x_{l-a(t+1)}\prod_{j=2}^{t+1}x_j^{j-1}\bigr)$ as $1\leq l-a(t+1)<t+1$.
This completes the proof of (\ref{a2}) and hence the proof of the (\ref{induction1}).

Finally, we take $t=n-2$ and $l=t+2=n$ in \eqref{induction1} and doing the same computation as in Case 1: \begin{align*}
X_{k,n}&\equiv X_{k,n-2,n}\equiv \bigl(\prod_{j=2}^{n-1}x_j^{j-1}\bigr)x_n^{n+k-3}(\tau_1\cdots \tau_{n-1})(\tau_1\cdots \tau_{n-2})\cdots(\tau_1\tau_2)\\
&=\bigl(\prod_{j=2}^{n-1}x_j^{j-1}\bigr)x_n^{n+k-3}(\tau_1\cdots \tau_{n-1})(\tau_1\cdots \tau_{n-2})\cdots(\tau_1\tau_2)(x_2\tau_1-\tau_1x_1)\\
&=\bigl(\prod_{j=2}^{n-1}x_j^{j-1}\bigr)x_n^{n+k-3}(\tau_1\cdots \tau_{n-1})(\tau_1\cdots \tau_{n-2})\cdots(\tau_1\tau_2x_2\tau_1)-\\
&\qquad \tau_1\bigl(x_1\prod_{j=2}^{n-1}x_j^{j-1}\bigr)x_n^{n+k-3}(\tau_1\cdots \tau_{n-1})(\tau_1\cdots \tau_{n-2})\cdots(\tau_1\tau_2)\\
&=\bigl(\prod_{j=2}^{n-1}x_j^{j-1}\bigr)x_n^{n+k-3}(\tau_1\cdots \tau_{n-1})(\tau_1\cdots \tau_{n-2})\cdots(\tau_1x_3\tau_2\tau_1)-\\
&\qquad \bigl(x_1\prod_{j=2}^{n-1}x_j^{j-1}\bigr)x_n^{n+k-3}\tau_1(\tau_1\cdots \tau_{n-1})(\tau_1\cdots \tau_{n-2})\cdots(\tau_1\tau_2)\\
&=\bigl(\prod_{j=2}^{n-1}x_j^{j-1}\bigr)x_n^{n+k-3}(\tau_1\cdots \tau_{n-1})(\tau_1\cdots \tau_{n-2})\cdots (\tau_1\tau_2\tau_3x_3)(\tau_1\tau_2\tau_1)\\
&=\bigl(\prod_{j=2}^{n-1}x_j^{j-1}\bigr)x_n^{n+k-3}(\tau_1\cdots \tau_{n-1})(\tau_1\cdots \tau_{n-2})\cdots (\tau_1\tau_2x_4\tau_3)(\tau_2\tau_1\tau_2)\\
&=\bigl(\prod_{j=2}^{n-1}x_j^{j-1}\bigr)x_n^{n+k-3}(\tau_1\cdots \tau_{n-1})(\tau_1\cdots \tau_{n-2})\cdots (\tau_1\tau_2\tau_3\tau_4x_4)(\tau_1\tau_2\tau_3)(\tau_2\tau_1\tau_2)\\
&=\bigl(\prod_{j=2}^{n-1}x_j^{j-1}\bigr)x_n^{n+k-3}(\tau_1\cdots \tau_{n-1})(\tau_1\cdots \tau_{n-2})\cdots (\tau_1\tau_2\tau_3x_5\tau_4)(\tau_1\tau_2\tau_3)(\tau_2\tau_1\tau_2)\\
&=\bigl(\prod_{j=2}^{n-1}x_j^{j-1}\bigr)x_n^{n+k-2}(\tau_1\cdots \tau_{n-1})(\tau_1\cdots \tau_{n-2})\cdots (\tau_1\tau_2\tau_3\tau_4)(\tau_1\tau_2\tau_3)(\tau_1\tau_2\tau_1)\\
&\quad\vdots\\
&=(\tau_1\cdots \tau_{n-1})(\tau_1\cdots \tau_{n-2})\cdots (\tau_1\tau_2\tau_3\tau_4)(\tau_1\tau_2\tau_3)(\tau_1\tau_2\tau_1)\bigl(\prod_{j=2}^{n-1}x_j^{j-1}\bigr)x_n^{n+k-2}\\
&=e_{[1,n]}x_n^{k-1}\pmod{[\NH_n(\Z),\NH_n(\Z)]}.
\end{align*}
\end{proof}

Recall that for any integers $1\leq a<b\leq n$, $w_{[a,b]}$ is defined to be the unique longest element in $\Sym_{\{a,a+1,\cdots,b-1,b\}}$, and
$$
e_{[a,b]}:=\tau_{w[a,b]}x_{a+1}x_{a+2}^2\cdots x_b^{b-a}\in \NH_n(\Z),\,\,w_{[a,a]}:=1 .
$$
Note that $e_{[a,b]}$ is a degree $0$ homogeneous idempotent. The following lemma can be regarded as a refinement and generalization of \cite[Lemma 3.17]{HS3}.

\begin{lem}\label{nil-Hecke relation}
Let $\O$ be an integral domain. For any $n\in\Z_{\geq 1}$ and $k\in\Z_{\geq 0}$, let $Z^{(k)}_n:=x_1^k\tau_1\tau_2\cdots \tau_{n-1}\in\NH_n(\O)$.  \begin{enumerate}
\item[(1)] If $n\geq 2$ and $0\leq k < n-1$, then we have $$ Z^{(k)}_n\in [\NH_n(\O),\NH_n(\O)];
$$
\item[(2)] If $k\geq n-1$, then we have $$
Z^{(k)}_n\in \sum_{d\in\Z_{\geq 1}}\sum_{\substack{l_j\geq 0, \sum_{j=1}^d l_j=k-(n-1)\\
1=a_1<a_2<\cdots<a_d<a_{d+1}=n+1}}\O \prod_{j=1}^d (e_{[a_j,a_{j+1}-1]}x_{a_{j+1}-1}^{l_j}) + [\NH_n(\O),\NH_n(\O)];
$$
\item[(3)] Let $d\in\Z_{\geq 1}$. For any integers $1=a_1<a_2<\cdots<a_d<a_{d+1}=n+1$, we have $$\prod_{j=1}^d e_{[a_j,a_{j+1}-1]}\in \O e_{[1,n]}+ [\NH_n(\O),\NH_n(\O)].$$ In particular, if $k=n-1$, then we have $$
Z^{(k)}_n\in \O e_{[1,n]}+ [\NH_n(\O),\NH_n(\O)].$$
\end{enumerate}
\end{lem}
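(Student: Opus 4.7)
The plan is to handle the three parts in the order (1), (3), (2): (1) is a direct degree consequence, (3) is a combinatorial/structural fact about idempotents in $\NH_n$, and (2) is the technical core whose proof produces both ``$e_{[1,n]}$-type'' contributions (treated via Proposition \ref{pre nil-Hecke relation}) and ``factored'' contributions (cleaned up by (3)). For Part (1), I would invoke graded Morita equivalence: by Lemma \ref{idempn}, $\NH_n(\O) e_{[1,n]}$ is a progenerator with endomorphism ring $\O[x_1,\dots,x_n]^{\Sym_n}$, so as a graded $\O$-module
\[
\Tr(\NH_n(\O)) \;\cong\; \O[x_1,\dots,x_n]^{\Sym_n},
\]
which is concentrated in non-negative degrees. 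Since $Z^{(k)}_n$ is homogeneous of degree $2k-2(n-1)$, strictly negative for $0 \leq k < n-1$, it must vanish in the cocenter.

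For Part (3), the product $\prod_{j=1}^d e_{[a_j,a_{j+1}-1]}$ is a degree-$0$ idempotent because the factors are commuting idempotents (with pairwise disjoint supporting index sets). By Lemma \ref{idempn}, every finitely generated projective $\NH_n(\Z)$-module is a direct sum of copies of $P(n) = \NH_n(\Z) e_{[1,n]}$, so this idempotent is equivalent, via orthogonal decomposition, to a sum of primitive idempotents each conjugate to $e_{[1,n]}$. Since equivalent idempotents have equal images in the cocenter, $\prod_j e_{[a_j,a_{j+1}-1]} \equiv c\, e_{[1,n]} \pmod{[\NH_n(\Z),\NH_n(\Z)]}$ for some non-negative integer $c$, and base change to $\O$ is immediate. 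The ``in particular'' clause then follows by applying (2) with $k = n-1$: the constraint $\sum_j l_j = 0$ forces each $l_j = 0$, reducing every spanning generator to $\prod_j e_{[a_j,a_{j+1}-1]}$.

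For Part (2), I proceed by strong induction on $n$, with base case $n = 1$ trivial since $Z^{(k)}_1 = x_1^k = e_{[1,1]} x_1^k$. For $n \geq 2$, iterating $x_1\tau_1 = \tau_1 x_2 - 1$ yields the identity
\[
x_1^k \tau_1 \;=\; \tau_1 x_2^k \;-\; \sum_{i=0}^{k-1} x_1^i x_2^{k-1-i},
\]
which splits $Z^{(k)}_n = A - B$ with $A := \tau_1 x_2^k \tau_2\cdots\tau_{n-1}$ and $B := \sum_i x_1^i x_2^{k-1-i} \tau_2\cdots\tau_{n-1}$. For $B$: each summand is $x_1^i$ (commuting with all other factors) times a ``shifted nilHecke'' element $x_2^{k-1-i}\tau_2\cdots\tau_{n-1}$ living in the subalgebra generated by $x_2,\dots,x_n,\tau_2,\dots,\tau_{n-1}$; apply the inductive hypothesis and part (1) inside that subalgebra, then prepend $e_{[1,1]} x_1^i$ to obtain terms of the claimed form with $a_2 = 2$. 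For $A$: manipulate via cyclic rotations in the cocenter together with successive braid and nilHecke commutations to bring it into the $X_{k',n}$ form of Proposition \ref{pre nil-Hecke relation}, so that invoking that proposition yields the $d = 1$ contribution $e_{[1,n]} x_n^{k'-1}$.

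The main obstacle will be precisely this manipulation of $A$ in Part (2): converting $\tau_1 x_2^k\tau_2\cdots\tau_{n-1}$ into an $X_{k,n}$-style expression modulo commutators requires intricate bookkeeping of $x$-$\tau$ and $\tau$-$\tau$ commutations, analogous to but more involved than the casework in the proof of Proposition \ref{pre nil-Hecke relation}. Auxiliary error terms arising along the way must be either absorbed into $[\NH_n(\O),\NH_n(\O)]$ via (1) applied to smaller nilHecke subalgebras, or recognized as contributions of the claimed form with $d \geq 2$ and handled by the induction.
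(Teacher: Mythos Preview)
Your proposal follows essentially the same strategy as the paper. For (1) you use the non-negative grading of $\Tr(\NH_n)\cong \mathrm{Sym}_n$ directly, whereas the paper deduces the $\Z$-case from the known $\Q$-case (\cite[Corollary~3.20]{HS3}) via purity of the commutator lattice; your route is equally valid and arguably cleaner. Part (3) is identical to the paper's argument. For (2), both you and the paper run induction on $n$, split off the $B$-sum via the identity $x_1^k\tau_1=\tau_1x_2^k-\sum_i x_1^i x_2^{k-1-i}$ and handle it in the shifted $\NH_{n-1}$, and then iterate on the surviving term until it becomes $X_{k-n+2,n}$, where Proposition~\ref{pre nil-Hecke relation} applies.

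One point to sharpen in your sketch of (2): the error terms produced at each intermediate stage $v$ (when passing from $(\prod_{j=2}^{v-1}x_j)\,x_v^{k-v+2}\tau_1\cdots\tau_{n-1}$ to the corresponding $v{+}1$ expression) factor as a ``front'' piece $(\prod_{j=2}^{v-1}x_j)\,x_v^{1+k'_1}\tau_1\cdots\tau_{v-1}$ in the first-$v$-indices copy of $\NH_v$ times a ``back'' piece $x_{v+1}^{k'_2}\tau_{v+1}\cdots\tau_{n-1}$ in the shifted $\NH_{n-v}$. The back piece is indeed handled by the induction hypothesis, as you say; but the front piece is \emph{not} of $Z^{(\ast)}$-type, so the induction hypothesis does not apply to it. It is precisely $X_{1+k'_1,v}$, and you need a further application of Proposition~\ref{pre nil-Hecke relation} inside $\NH_v$ to convert it to $e_{[1,v]}x_v^{k'_1}$ before the product has the claimed $\prod_j e_{[a_j,a_{j+1}-1]}x_{a_{j+1}-1}^{l_j}$ shape. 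The paper does exactly this (``apply (1), Lemma~\ref{pre nil-Hecke relation} and induction hypothesis''), so Proposition~\ref{pre nil-Hecke relation} is invoked at every intermediate $v$, not just once at the end.
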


\begin{proof}
By \cite[Proposition 2.21]{Rou2}, $\NH_n(\O)$ is a matrix algebra over the ring $\rm{Sym}_n$ of symmetric polynomials on $x_1,\cdots,x_n$. Let $\mathcal{B}({\rm{Sym}_n})$ be an integral basis of $\rm{Sym}_n$. It follows that $$
\bigl\{E_{ij}(f),E_{kk}(g)-E_{11}(g)\bigm|1\leq i\neq j\leq n!,2\leq k\leq n!, f,g\in\mathcal{B}({\rm{Sym}_n})\bigr\}
$$ gives an integral basis for the commutator subspace $[\NH_n(\O),\NH_n(\O)]$, which implies that the canonical map $\O\otimes_\Z [\NH_n(\Z),\NH_n(\Z)]\rightarrow [\NH_n(\O),\NH_n(\O)]$ is an isomorphism. Hence it suffices to prove this lemma for $\O=\Z$.

(1) Since
$\Tr(\NH_n(\Z)):=\NH_n(\Z)/[\NH_n(\Z),\NH_n(\Z)]$ is a free $\Z$-module with a basis $$
\bigl\{E_{11}(f)+[\NH_n(\Z),\NH_n(\Z)]\bigm|f\in\mathcal{B}({\rm{Sym}_n})\bigr\}.
$$
It follows that $[\NH_n(\Z),\NH_n(\Z)]$ is a pure $\Z$-submodule of $\NH_n(\Z)$. Hence we can deduce that $[\NH_n(\Z),\NH_n(\Z)]=\NH_n(\Z)\cap [\NH_n(\Q),\NH_n(\Q)]$. However, we have $$Z^{(k)}_n\in [\NH_n(\Q),\NH_n(\Q)]
$$ by \cite[Corollary 3.20]{HS3}.
It follows that $$
Z^{(k)}_n\in\NH_n(\Z)\cap [\NH_n(\Q),\NH_n(\Q)]=[\NH_n(\Z),\NH_n(\Z)].$$

(2) We use induction on $n$. The statement is trivial for $n=1$. Henceforth we assume $n>1$. We claim that for any  $2\leq v\leq n$,  \begin{equation}\label{induction3}
\begin{matrix}Z^{(k)}_n \in \sum_{d\in\Z_{\geq 1}}\sum_{\substack{l_j\geq 0, \sum_{j=1}^d l_j=k-(n-1)\\
1=a_1<a_2<\cdots<a_d<a_{d+1}=n+1}}\Z \prod_{j=1}^d e_{[a_j,a_{j+1}-1]}x_{a_{j+1}-1}^{l_j}+[\NH_n(\Z),\NH_n(\Z)]\\
+\biggl(\prod_{j=2}^{v-1}x_j\biggr)x_{v}^{k-(v-2)}\tau_1\tau_2\cdots\tau_{n-1}.\end{matrix}
\end{equation}

In fact, we have in $\Tr(\NH_n(\Z))$ that
$$\begin{aligned}
Z^{(k)}_n&\equiv \tau_{1}\tau_{2}\cdots \tau_{n-1} x_{1}^k \\
&\equiv x_{2}^k\tau_{1}\tau_{2}\cdots \tau_{n-1}-\sum_{\substack{k_1+k_2=k-1,\\ k_1,k_2\in\N}}x_1^{k_1}x_2^{k_2}\tau_2\cdots\tau_{n-1}\pmod{[\NH_n(\Z),\NH_n(\Z)]}.
\end{aligned}$$
Recall that the natural algebra embedding $\NH_{n-1}(\Z)\hookrightarrow \NH_n(\Z)$ defined on generators by $x_i\mapsto x_{i+1}$ and $\tau_j\mapsto \tau_{j+1}$ for $1\leq i<n,\,1\leq j<n-1$ induces a linear map:
$\Tr(\NH_{n-1}(\Z))\rightarrow \Tr(\NH_n(\Z))$. Applying induction hypothesis and the result (1), we get that $$\begin{aligned}
&\sum_{\substack{k_1+k_2=k-1,\\ k_1,k_2\in\N}}x_1^{k_1}x_2^{k_2}\tau_2\cdots\tau_{n-1}\\
&\qquad \in\sum_{\substack{k_1+k_2=k-1,\\ k_1,k_2\in\N}}  x_1^{k_1}\sum_{d\in\Z_{\geq 1}}\sum_{\substack{l_j\geq 0, \sum_{j=2}^d l_j=k_2-(n-2)\\
2=a_2<a_3<\cdots<a_d<a_{d+1}=n}}\Z \prod_{j=1}^d e_{[a_j,a_{j+1}-1]}x_{a_{j+1}-1}^{l_j}+ [\NH_n(\Z),\NH_n(\Z)]\\
&\qquad \subseteq \sum_{d\in\Z_{\geq 1}}\sum_{\substack{l_j\geq 0, \sum_{j=1}^d l_j=k-(n-1)\\
1=a_1<a_2<\cdots<a_d<a_{d+1}=n+1}}\Z \prod_{j=1}^d e_{[a_j,a_{j+1}-1]}x_{a_{j+1}-1}^{l_j}+ [\NH_n(\Z),\NH_n(\Z)],
\end{aligned}$$ where the last inclusion follows from $k_1+k_2-(n-2)=k-1-(n-2)=k-(n-1)$ when $k_1+k_2=k-1$. Hence the claim \eqref{induction3} is true for $v=2$.

Suppose the claim \eqref{induction3} is true for $2\leq v<n$. We only need to show that $$\begin{aligned}
&\biggl(\prod_{j=2}^{v-1}x_j\biggr)x_{v}^{k-(v-2)}\tau_1\tau_2\cdots\tau_{n-1}\in \bigl(\prod_{j=2}^{v}x_j\bigr)x_{v+1}^{k-(v-1)}\tau_{1}\tau_{2}\cdots \tau_{n-1}+ \\
&\qquad\qquad \sum_{d\in\Z_{\geq 1}}\sum_{\substack{l_j\geq 0, \sum_{j=1}^d l_j=k-(n-1)\\ 1=a_1<a_2<\cdots<a_d<a_{d+1}=n+1}}\Z \prod_{j=1}^d e_{[a_j,a_{j+1}-1]}x_{a_{j+1}-1}^{l_j}+[\NH_n(\Z),\NH_n(\Z)].\end{aligned}
$$
Since $v\leq n-1\leq k$, $k-(v-2)\geq 2$, we can compute $$\begin{aligned}
&\quad\, \bigl(\prod_{j=2}^{v-1}x_j\bigr)x_v^{k-(v-2)}\tau_{1}\tau_{2}\cdots \tau_{n-1}=x_v^{k-(v-1)}\bigl(\prod_{j=2}^{v}x_j\bigr)\tau_{1}\tau_{2}\cdots \tau_{n-1}\\
&\equiv \bigl(\prod_{j=2}^{v}x_j\bigr)\tau_{1}\tau_{2}\cdots \tau_{n-1}x_v^{k-(v-1)}\\
&\equiv\bigl(\prod_{j=2}^{v}x_j\bigr)x_{v+1}^{k-(v-1)}\tau_{1}\tau_{2}\cdots \tau_{n-1} -\sum_{k'_1+k'_2=k-v}  \bigl(\prod_{j=2}^{v}x_j\bigr)\tau_1\cdots \tau_{v-1}x_v^{k'_1}x_{v+1}^{k'_2}\tau_{v+1}\cdots\tau_{n-1}\\
&\equiv\bigl(\prod_{j=2}^{v}x_j\bigr)x_{v+1}^{k-(v-1)}\tau_{1}\tau_{2}\cdots \tau_{n-1} -\sum_{k'_1+k'_2=k-v}  \bigl(\prod_{j=2}^{v}x_j\bigr)\tau_1\cdots \tau_{v-1}x_{v+1}^{k'_2}\tau_{v+1}\cdots\tau_{n-1}x_v^{k'_1}\\
& \equiv \bigl(\prod_{j=2}^{v}x_j\bigr)x_{v+1}^{k-(v-1)}\tau_{1}\tau_{2}\cdots \tau_{n-1} -\sum_{k'_1+k'_2=k-v}  \bigl(\prod_{j=2}^{v-1}x_j\bigr) x_v^{1+k'_1} \tau_1\cdots \tau_{v-1}x_{v+1}^{k'_2}\tau_{v+1}\cdots\tau_{n-1}\\ &\quad \pmod{[\NH_n(\Z),\NH_n(\Z)]}.
\end{aligned}$$
We have a linear map $\Tr(\NH_v(\Z))\otimes\Tr(\NH_{n-v}(\Z))\rightarrow \Tr(\NH_n(\Z))$ which is induced by the natural algebra homomorphism $\NH_v(\Z)\otimes \NH_{n-v}(\Z)\rightarrow \NH_n(\Z)$ given by  $$
 x_{i'}\mapsto x_{i'},\,\,\tau_{j'}\mapsto\tau_{j'},\,\,x_i\mapsto x_{i+v},\,\,\tau_j\mapsto \tau_{j+v},\quad \forall\, 1\leq i'\leq v, 1\leq j'<v,\,1\leq i\leq n-v,\,1\leq j<n-v.
$$

Now apply (1), Lemma \ref{pre nil-Hecke relation} and  induction hypothesis to the second term in last paragraph, we have $$\begin{aligned}
&\quad\,\sum_{\substack{k'_1+k'_2=k-v,\\ k'_1,k'_2\in\N}}  \bigl(\prod_{j=2}^{v-1}x_j\bigr)x_v^{1+k'_1} \tau_1\cdots \tau_{v-1}x_{v+1}^{k'_2}\tau_{v+1}\cdots\tau_{n-1}\\
&\qquad \in \sum_{\substack{k'_1+k'_2=k-v,\\ k'_1,k'_2\in\N}} e_v x_v^{k'_1}\sum_{d\in\Z_{\geq 1}}\sum_{\substack{l_j\geq 0, \sum_{j=1}^d l_j=k'_2-(n-v-1)\\
1=a_1<a_2<\cdots<a_d<a_{d+1}=n-v+1}}\Z \prod_{j=1}^d e_{[a_j+v,a_{j+1}-1+v]}x_{a_{j+1}-1+v}^{l_j}\\
&\qquad\qquad + [\NH_n(\Z),\NH_n(\Z)]\\
&\qquad\subset\sum_{d\in\Z_{\geq 1}} \sum_{\substack{l_j\geq 0, \sum_{j=1}^d l_j=k-(n-1)\\
1=a_1<a_2<\cdots<a_d<a_{d+1}=n+1}}\Z \prod_{j=1}^d e_{[a_j,a_{j+1}-1]}x_{a_{j+1}-1}^{l_j}+ [\NH_n(\Z),\NH_n(\Z)],
\end{aligned}$$ the inclusion follows from $k'_1+k'_2-(n-v-1)=k-v-(n-v-1)=k-(n-1)$ since $k'_1+k'_2=k-v. $ This completes the proof of our claim \eqref{induction3}.

Finally, we take $v=n$ in the claim \eqref{induction3} and get $$\begin{aligned}
Z^{(k)}_n &\in \bigl(\prod_{j=2}^{n-1}x_j\bigr)x_n^{k-n+2}\tau_1\tau_2\cdots\tau_{n-1} + \sum_{d\in\Z_{\geq 1}}\sum_{\substack{l_j\geq 0, \sum_{j=1}^d l_j=k-(n-1)\\
1=a_1<a_2<\cdots<a_d<a_{d+1}=n+1}}\Z \prod_{j=1}^d e_{[a_j,a_{j+1}-1]}x_{a_{j+1}-1}^{l_j}\\
&\qquad + [\NH_n(\Z),\NH_n(\Z)].
\end{aligned}$$

However, by Lemma \ref{pre nil-Hecke relation}, $$\begin{aligned}
&\quad\,\bigl(\prod_{j=2}^{n-1}x_j\bigr)x_n^{k-n+2}\tau_1\tau_2\cdots\tau_{n-1}\\
&\in e_nx_n^{k-(n-1)}+[\NH_n(\Z),\NH_n(\Z)]\\
& \subseteq \sum_{d\in\Z_{\geq 1}}\sum_{\substack{l_j\geq 0, \sum_{j=1}^d l_j=k-(n-1)\\
1=a_1<a_2<\cdots<a_d<a_{d+1}=n+1}}\Z \prod_{j=1}^d e_{[a_j,a_{j+1}-1]}x_{a_{j+1}-1}^{l_j}+ [\NH_n(\Z),\NH_n(\Z)].
\end{aligned}$$ This completes the proof of (2).

(3) Note that $\prod_{j=1}^d e_{[a_j,a_{j+1}-1]}$ is an idempotent. By \cite[Lemma 2.19, Proposition 2.22]{Rou2}, $e_{[1,n]}$ is a primitive idempotent in $\NH_n(\Z)$ and $\NH_n(\Z)\simeq {\NH_n(\Z)e_{[1,n]}}^{\oplus n!}$ as left $\NH_n(\Z)$-module.
Since $\NH_n(\Z)$ has a unique indecomposable finitely generated projective module, the image of any two primitive idempotents in the cocenter are equal by \cite[\S6, Exercise 14]{CR}. Combing this with (2), we prove the lemma.
\end{proof}

A sequence of non-negative integers $\mathbf{a}=(a_1,a_2,\cdots,a_k)$ is called a composition of $n$ if $\sum_{i=1}^{k}a_i=n$. If $\mathbf{a}=(a_1,a_2,\cdots,a_k)$ is a composition of $n$ then we write $\mathbf{a}\models n$.
Let $\alpha\in Q_n^+$ and $\nu=(\nu_1,\cdots,\nu_n)\in I^\alpha$. Following \cite{HS3}, we define $$
\mathcal{C}(\nu):=\Biggl\{\mathbf{b}:=(b_1,\cdots,b_m)\models n\Biggm|\begin{matrix}\text{$m,b_1,\cdots,b_m\geq 1$, $\nu_j=\nu_{j+1}$, for any $1\leq i\leq m$}\\
\text{and any $\sum_{k=1}^{i-1}b_k+1\leq j\leq\sum_{k=1}^{i}b_k$}
\end{matrix}\Biggr\}.
$$
Let $\mathbf{b}=(b_1,\cdots,b_m)\in\mathcal{C}(\nu)$. Following \cite[\S3.1]{HS3}, we define $\mathbf{c}=(c_0,c_1,\cdots,c_{m})$ as follows: \begin{equation}\label{bc}
c_0:=0,\,\, c_j:=b_1+\cdots+b_j,\,\,j=1,2,\cdots,m.
\end{equation}

Let $K$ be an arbitrary field. We shall write $\NH_n:=\NH_n(K)$. Let $\alpha\in Q_n^+$ and $\R[\alpha]$ be the cyclotomic KLR algebra over $K$ which was recalled in the last section.

\begin{dfn}\text{(\cite[Definition 3.18]{HS3})}\label{ab} Let $1\leq m,m'\leq n$. We define $A_{m}$ to be the $K$-subalgebra of $\R[\alpha]$ generated by $$
\tau_w,\, x_j,\,\, w\in \Sym_{\{1,2,\cdots,m\}}, 1\leq j\leq m .
$$
We define $B_{m'-1}$ to be the $K$-subalgebra of $\R[\alpha]$ generated by $$
\tau_w,\, x_j,\,\, w\in \Sym_{\{m',m'+1,\cdots,n\}}, m'\leq j\leq n .
$$
We set $A_0=Ke(\alpha)=B_n$. We call $A_{m}$ the first $m$-th part of $\R[\alpha]$, while call $B_{m'-1}$ the last $(n-(m'-1))$-th part of $\R[\alpha]$.
\end{dfn}

The following lemma is a generalization of \cite[Corollary 3.20]{HS3} to ground field $K$ of arbitrary characteristic.

\begin{lem}\label{relations2} Let $K$ be an arbitrary field. Let $\nu\in I^\alpha$, $\mathbf{b}=(b_1,\cdots,b_m)\in\mathcal{C}(\nu)$ and $\mathbf{c}=(c_0,c_1,\cdots,c_m)$ be defined as in (\ref{bc}).
Suppose that $$y=y_1 x_{c_t+1}^k\tau_{c_t+1}\tau_{c_{t}+2}\cdots \tau_{c_{t+1}-1} y_2e(\nu),$$ where $k\in\N, y_1\in A_{c_t}e(\nu),\,y_2\in B_{c_{t+1}}e(\nu), 0\leq t\leq m-1$. \begin{enumerate}
\item[(1)] If $k<b_{t+1}-1$, then $y\in [\R[\alpha],\R[\alpha]]$;
\item[(2)] If $k\geq b_{t+1}-1$, then we have $$
y\in \sum_{d\in\Z_{\geq 1}}\sum_{\substack{l_j\geq 0, \sum_{j=1}^d l_j=k-(b_{t+1}-1)\\
c_t+1=a_1<a_2<\cdots<a_d<a_{d+1}=c_{t+1}+1}}K y_1 \Bigl(\prod_{j=1}^d e_{[a_j,a_{j+1}-1]}x_{a_{j+1}-1}^{l_j}\Bigr) y_2 e(\nu)+[\R[\alpha],\R[\alpha]];
$$
\item[(3)] If $k=b_{t+1}-1$, we have $$
y\in K y_1 e_{[c_t+1,c_{t+1}]} y_2 e(\nu)+[\R[\alpha],\R[\alpha]].
$$
\item[(4)] Let $d\in\Z_{\geq 1}$. For any integers $c_t+1=a_1<a_2<\cdots<a_d<a_{d+1}=c_{t+1}+1$, we have $$
y_1\Bigl(\prod_{j=1}^d e_{[a_j,a_{j+1}-1]}\Bigr)y_2e(\nu)\in K y_1e_{[c_t+1,c_{t+1}]}y_2e(\nu)+ [\R[\alpha],\R[\alpha]].$$
\end{enumerate}
\end{lem}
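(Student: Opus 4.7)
The plan is to reduce everything to the nilHecke cocenter identities already proved in Lemma \ref{nil-Hecke relation}, via a sandwich argument. Since $\mathbf{b}\in\mathcal{C}(\nu)$, the residues $\nu_{c_t+1}=\nu_{c_t+2}=\cdots=\nu_{c_{t+1}}$ are constant on the middle block. On this block the KLR relations collapse to the nilHecke relations, because $Q_{i,i}=0$ kills the quadratic $\tau^2$, the braid defect vanishes, and the $\tau x$ slide yields the usual $\pm 1$. Hence the assignment $x_i\mapsto x_{c_t+i}$, $\tau_j\mapsto \tau_{c_t+j}$ induces a well-defined algebra homomorphism $\varphi\colon\NH_{b_{t+1}}(K)\to e(\nu)\R[\alpha]e(\nu)$, under which the middle factor $x_{c_t+1}^k\tau_{c_t+1}\cdots\tau_{c_{t+1}-1}$ is exactly $\varphi(Z^{(k)}_{b_{t+1}})$. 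Moreover the image of $\varphi$ commutes with $y_1,y_2$ and $e(\nu)$: $y_1\in A_{c_t}$ involves only $x_j,\tau_j$ with $j\leq c_t$, $y_2\in B_{c_{t+1}}$ involves only $j\geq c_{t+1}+1$, the gap in indices is large enough to force every KLR commutation to be trivial, and $e(\nu)$ commutes because $\nu$ is constant on the relevant block.

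The main technical step will be the following \emph{sandwich lemma}: for any $c_1,c_2$ in $\varphi(\NH_{b_{t+1}}(K))$, one has
\begin{equation*}
y_1[c_1,c_2]y_2e(\nu) \;=\; [y_1c_1,\,c_2y_2e(\nu)] \;-\; [y_1c_2,\,c_1y_2e(\nu)] \;-\; [c_1,\,c_2y_2e(\nu)y_1],
\end{equation*}
an identity verified by direct expansion using only that $c_1,c_2$ commute with $y_1,y_2,e(\nu)$. This identity is essential because the naive statement ``$z\in[\R[\alpha],\R[\alpha]]\Rightarrow y_1zy_2\in[\R[\alpha],\R[\alpha]]$'' is \emph{false} in general; the commutation of $\varphi(\NH_{b_{t+1}}(K))$ with the outside factors is what makes the transfer possible. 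By linearity, $y_1\varphi(z)y_2e(\nu)\in[\R[\alpha],\R[\alpha]]$ whenever $z\in[\NH_{b_{t+1}}(K),\NH_{b_{t+1}}(K)]$.

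With these two ingredients in hand, the four parts of the lemma are direct translations of Lemma \ref{nil-Hecke relation}. For (1), part (1) of \ref{nil-Hecke relation} places $Z^{(k)}_{b_{t+1}}\in[\NH_{b_{t+1}},\NH_{b_{t+1}}]$ for $k<b_{t+1}-1$, and the sandwich lemma transports the statement into $[\R[\alpha],\R[\alpha]]$. For (2), part (2) of \ref{nil-Hecke relation} expresses $Z^{(k)}_{b_{t+1}}$ as a $K$-linear combination of the products $\prod_j e_{[a_j,a_{j+1}-1]}x_{a_{j+1}-1}^{l_j}$ modulo nilHecke commutators; applying $\varphi$ and shifting indices by $c_t$ yields the claimed shape, and sandwiching the commutator part kills it. Part (4) translates part (3) of \ref{nil-Hecke relation}: each product $\prod_{j=1}^d e_{[a_j,a_{j+1}-1]}$ (after reindexing) is an idempotent in $\varphi(\NH_{b_{t+1}}(K))$ equal to a scalar multiple of $\varphi(e_{[1,b_{t+1}]})=e_{[c_t+1,c_{t+1}]}$ modulo nilHecke commutators, and the sandwich lemma finishes. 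Part (3) then follows by combining (2) and (4) in the extremal case $k=b_{t+1}-1$.

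The main obstacle is the sandwich lemma, and more precisely the verification that $y_1,y_2$ really do commute with $\varphi(\NH_{b_{t+1}}(K))$ and with $e(\nu)$; this is precisely the content of the hypothesis $y_1\in A_{c_t}$, $y_2\in B_{c_{t+1}}$, and why the decomposition into ``nilHecke parts'' indexed by $\mathbf{b}\in\mathcal{C}(\nu)$ is the correct framework. Once these commutations and the sandwich identity are in place, everything else is a routine index shift $i\mapsto c_t+i$ on top of Lemma \ref{nil-Hecke relation}.
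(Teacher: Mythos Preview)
Your approach is the same as the paper's: build the homomorphism $\varphi\colon\NH_{b_{t+1}}\to\R[\alpha]$ by $x_i\mapsto x_{c_t+i}e(\nu)$, $\tau_j\mapsto\tau_{c_t+j}e(\nu)$, observe that the outside factors commute with the middle block, and read off parts (1)--(4) from Lemma \ref{nil-Hecke relation}. The paper's proof is two lines and says exactly this.

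One technical point deserves care, since you single it out as ``the main obstacle''. The claim that $c_1,c_2\in\varphi(\NH_{b_{t+1}})$ commute with $y_1,y_2$ is not literally true: writing $y_1=a\,e(\nu)$ with $a\in A_{c_t}$ and $c=c'e(\nu)$ with $c'$ in the bare middle subalgebra, one has $y_1c=c'a\,e(\nu)$ but $cy_1=c'e(\nu)a\,e(\nu)$, and these differ whenever $e(\nu)a\neq a\,e(\nu)$ (e.g.\ $a=\tau_1$ with $\nu_1\neq\nu_2$). Consequently your displayed sandwich identity fails as an exact equality in $\R[\alpha]$. The correct statement, which is all you need, is that the \emph{bare} generators $x_{c_t+i},\tau_{c_t+j}$ commute with each of $a$, $b$, and $e(\nu)$ separately. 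Then $y=[c_1',c_2']\,Z$ with $Z:=a\,e(\nu)\,b\,e(\nu)$, and since $c_2'$ commutes with $Z$ one has $[c_1',c_2']Z=[c_1'Z,c_2']\in[\R[\alpha],\R[\alpha]]$ directly. (The paper's one-line ``$y_1,y_2$ commute with the image'' is equally imprecise; the fix is the same.) With this adjustment your argument is complete.
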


\begin{proof} Consider the following algebraic maps: $$\NH_{b_{t+1}}\rightarrow \RR_{\alpha} \rightarrow \R[\alpha],
$$ where the first map is determined uniquely by $x_a\mapsto x_{a+c_{t}}e(\nu)$ and $\tau_b\mapsto \tau_{b+c_{t}}e(\nu)$ for $1\leq a\leq b_{t+1},\,1\leq b\leq b_{t+1}-1$, and the second map is the natural surjection. This map induces a $K$-map between cocenters: $$\Tr(\NH_{b_{t+1}})\rightarrow \Tr(\R[\alpha]).
$$ Now note that both $y_1, y_2$ commute with the image of $\NH_{b_{t+1}}$, the Lemma follows from Lemma \ref{nil-Hecke relation}.
\end{proof}

In the rest of this section, we shall recall some notations and a result in \cite{HS3} which will be used in the next subsection.

Let $\nu=(\nu_1,\cdots,\nu_n)\in I^\alpha$, $\mathbf{b}=(b_1,\cdots,b_m)\in\mathcal{C}(\nu)$ and $\mathbf{c}=(c_0,c_1,\cdots,c_m)$ be defined as in (\ref{bc}). We can decompose $\nu$ as follows:
\begin{equation}\label{nu1}
\nu=(\underbrace{\nu^{1},\nu^{1},\cdots,\nu^{1}}_{b_{1}\,copies},\cdots,\underbrace{\nu^{m},\nu^{m},\cdots,\nu^{m}}_{b_{m}\,copies}),
\end{equation}
where $m,b_1,\cdots,b_m\in\Z^{\geq 1}$ with $\sum_{i=1}^{m}b_i=n$, $\nu^1,\cdots,\nu^m\in I$. Note that it could happen that $\nu^{j}=\nu^{j+1}$ for some $1\leq j<m$. We define \begin{equation}\label{cLamnu}
\mathcal{C}^\Lam(\nu):=\biggl\{\mathbf{b}=(b_1,\cdots,b_m)\in\mathcal{C}(\nu)\biggm|\lam_{c_i,\nu}:=\<\alpha_{\nu^{i+1}},\Lam-\sum_{j=1}^{c_i}\alpha_{\nu_j}\> >0, \forall\,0\leq i<m\biggr\}.
\end{equation}

\begin{dfn} Let $\nu\in I^\alpha$. Let $\R[\nu,1]$ be the $K$-subspace of $\R[\alpha]$ spanned by the elements of the following form: \begin{equation}\label{pregenerator}
\Biggl\{\prod\limits_{0\leq i<m}\Bigl(x_{c_i+1}^{n_{\bc,i+1}}\tau_{c_i+1}\tau_{c_i+2}\cdots\tau_{c_{i+1}-1}\Bigr)e(\nu)\Biggm|
\begin{matrix}\text{$\mathbf{b}\in\mathcal{C}^\Lam(\nu), 0\leq n_{\bc,i+1}<\lam_{c_i,\nu}, \forall\,0\leq i<m$,}\\
\text{where $\{c_j|0\leq j\leq m\}$ is defined}\\ \text{using $\mathbf{b}$ as in (\ref{bc}).} \end{matrix}\Biggr\}.
\end{equation}
\end{dfn}

Note that in the above definition, any two factors in the product $\Bigl(x_{c_i+1}^{n_{\bc,i+1}}\tau_{c_i+1}\tau_{c_i+2}\cdots\tau_{c_{i+1}-1}\Bigr)e(\nu)$,  $\Bigl(x_{c_j+1}^{n_{\bc,j+1}}\tau_{c_j+1}\tau_{c_j+2}\cdots\tau_{c_{j+1}-1}\Bigr)e(\nu)$ actually commute with each other. So it makes no confusion without specifying the order of the product.

For any subset $A$ of $\R[\alpha]$, we use $\overline{A}$ to denote the natural image of $A$ in the cocenter $\R[\alpha]/[\R[\alpha],\R[\alpha]]$ of $\R[\alpha]$.

\begin{thm}\text{(\cite[Theorem 3.7]{HS3})}\label{generator} For any $\alpha\in Q_n^+$, we have $$
\Tr(\R[\alpha])=\RR^\Lam_\alpha/[\RR^\Lam_\alpha,\RR^\Lam_\alpha]=\sum_{\nu\in I^\alpha}\overline{\RR^\Lam_{\nu,1}}.$$
\end{thm}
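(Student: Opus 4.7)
The plan is to reduce any element of $\R[\alpha]$, modulo the commutator space, to a $K$-linear combination of the distinguished ``cyclic'' generators listed in $\RR^\Lam_{\nu,1}$. I would start from a spanning set of $\R[\alpha]$ by standard monomials $\tau_w x^{\mathbf{a}} e(\nu)$ (with $w\in\Sym_n$ and $\mathbf{a}\in\N^n$) and process each monomial in three stages: rearrange the $\tau$-part into cycles adapted to a composition $\mathbf{b}\in\mathcal{C}(\nu)$ of $n$; kill or control the $x$-powers using the cyclotomic relation and its ``shifted'' consequences; and finally eliminate any residual nilHecke block that fails the positivity condition in $\mathcal{C}^\Lam(\nu)$.

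More concretely, the first step is to observe that $ab\equiv ba\pmod{[\R[\alpha],\R[\alpha]]}$ allows cyclic rearrangement, so every monomial may be conjugated into a product running ``left-to-right'' over successive maximal nilHecke parts of $\nu$ (maximal runs of equal residues). Within each such run of length $b$ the element can be written as $y_1\cdot(\text{something in }\NH_b)\cdot y_2$, where $y_1,y_2$ lie in the commuting subalgebras $A_m$, $B_{m'-1}$ of Definition \ref{ab}. Lemma \ref{relations2} (our Section~3 nilHecke calculus) then reduces the middle factor modulo commutators to a $K$-combination of elements of the form $x_{c_i+1}^{k}\tau_{c_i+1}\tau_{c_i+2}\cdots\tau_{c_{i+1}-1}$, which is precisely the cycle shape appearing in \eqref{pregenerator}. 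The cyclotomic relation $x_1^{\<h_{\nu_1},\Lam\>}e(\nu)=0$, together with its straightforward shifted analogues obtained by moving the defining ideal through the $\tau_j$'s (or equivalently, by the weight combinatorics encoded in $\lambda_{c_i,\nu}$), then bounds $k$ by $\lambda_{c_i,\nu}-1$. If at any block one has $\lambda_{c_i,\nu}\le 0$, the same shifted cyclotomic relation forces the whole monomial into $[\R[\alpha],\R[\alpha]]$, restricting the relevant compositions to $\mathcal{C}^\Lam(\nu)\subset\mathcal{C}(\nu)$.

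I would organise this as an induction, say on the pair $(|w|,|\mathbf{a}|)$ in a suitable lexicographic order, with the base case being the idempotents $e(\nu)$ themselves (included in $\RR^\Lam_{\nu,1}$ via the composition $\mathbf{b}=(1,1,\ldots,1)$ when no jumps are forced). The commutator rearrangement plus Lemma \ref{relations2} strictly reduces the nilHecke complexity inside each block, and the shifted cyclotomic relations strictly reduce the $x$-degrees, so the induction closes. The main obstacle is bookkeeping: one must make sure the rearrangements produce factors that genuinely commute (here the block structure of $\nu$ ensures that factors $x_{c_i+1}^{n_{i+1}}\tau_{c_i+1}\cdots\tau_{c_{i+1}-1}e(\nu)$ for different $i$'s act on disjoint strands and so commute, as remarked just before Theorem \ref{generator}), and to verify that the shifted cyclotomic relations used to bound $n_{\bc,i+1}<\lambda_{c_i,\nu}$ really are consequences of $a^\Lam_\alpha(x_1)=0$ modulo commutators — this is where the interplay between the $\tau$'s, the cyclotomic ideal and cyclic commutator manipulations is most delicate, and where the characteristic-free strengthening of Lemma \ref{relations2} in this section does the essential work.
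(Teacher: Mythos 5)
The paper does not actually prove this theorem: the \verb|\text{(\cite[Theorem 3.7]{HS3})}| tag in the theorem header signals that the statement is quoted verbatim from the earlier paper [HS3], and no proof appears here. So there is no ``paper's own proof'' to compare you against; I will instead assess your sketch on its own merits.

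The broad skeleton is sensible and matches what one would expect: pass to $\sum_{\nu}e(\nu)\R[\alpha]e(\nu)$ modulo commutators, decompose $\nu$ into runs of equal residues, apply the nilHecke calculus within each run, and invoke the cyclotomic relation and its propagated consequences to restrict to $\mathcal{C}^\Lam(\nu)$ and cap the $x$-exponents. The two serious gaps are the ones you partly flag yourself. First, the step ``every monomial may be conjugated into a product running left-to-right over successive maximal nilHecke parts'' is where most of the work lies and is not justified by cyclic rearrangement alone: a general $w$ with $w\cdot\nu=\nu$ need not respect any block decomposition (e.g.\ $w=s_1s_2s_1$ on $\nu=(i,j,i)$), and the target spanning set $\RR^\Lam_{\nu,1}$ consists only of very particular products of single long cycles $\tau_{c_i+1}\cdots\tau_{c_{i+1}-1}$ against a composition $\mathbf{b}\in\mathcal C^\Lam(\nu)$. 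Reducing arbitrary $\tau_w$ to these requires a genuine braid-plus-cyclic induction, not a one-line conjugation. Second, your induction on $(|w|,|\mathbf{a}|)$ in lexicographic order is not obviously well-founded: the rewriting rules (cyclic moves, the relations $\tau_jx_{j+1}-x_j\tau_j=1$, the cyclotomic pushes) routinely trade length for degree and vice versa, so the proposed measure need not decrease; the machinery in this paper suggests organizing the induction around the homogeneous degree or around a pair of auxiliary indices as in the proof of Proposition~\ref{pre nil-Hecke relation}. Finally, note that leaning on Lemma~\ref{relations2} is logically fine (it is self-contained in Section~3 and does not presuppose Theorem~\ref{generator}), but it is postdates [HS3, Theorem 3.7], so what you have sketched is at best a new proof, not a reconstruction of the cited one.
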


We end this section with the following corollary.

\begin{cor}\label{2z} For any homogeneous element $0\neq y\in\Tr(\R[\alpha])$ and $0\neq z\in Z(\R[\alpha])$, both $\deg y$ and $\deg z$ are even integers.
\end{cor}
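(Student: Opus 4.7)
The plan is to reduce the statement to an inspection of the spanning set for $\Tr(\R[\alpha])$ provided by Theorem \ref{generator}, and then to transfer the result to $Z(\R[\alpha])$ via the duality from Lemma \ref{tLam}. The crux is the simple observation that $(\alpha_i,\alpha_i)=2d_i$ is always an even integer, so that each term in the degree formula for the KLR generators is even.

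First I would verify that $d_{\Lam,\alpha}=2(\alpha,\Lam)-(\alpha,\alpha)$ is itself always even. Indeed $2(\alpha,\Lam)\in 2\Z$ trivially, and writing $\alpha=\sum_i k_i\alpha_i$ one has
\begin{equation*}
(\alpha,\alpha)=\sum_i k_i^2(\alpha_i,\alpha_i)+2\sum_{i<j}k_ik_j(\alpha_i,\alpha_j)=\sum_i k_i^2\cdot 2d_i+2\sum_{i<j}k_ik_j(\alpha_i,\alpha_j)\in 2\Z.
\end{equation*}
Combined with the isomorphism $\Tr(\R[\alpha])\cong\bigl(Z(\R[\alpha])\bigr)^{\circledast}\<d_{\Lam,\alpha}\>$ from Lemma \ref{tLam}, we see that $Z(\R[\alpha])_e\neq 0$ holds precisely when $\Tr(\R[\alpha])_{d_{\Lam,\alpha}-e}\neq 0$, and since $d_{\Lam,\alpha}$ is even, the parity statement for $Z(\R[\alpha])$ will follow from that for $\Tr(\R[\alpha])$. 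So it suffices to prove that every nonzero homogeneous element of $\Tr(\R[\alpha])$ lies in an even-degree component.

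Next I would appeal to Theorem \ref{generator}, which expresses $\Tr(\R[\alpha])$ as the sum over $\nu\in I^\alpha$ of the images of the subspaces $\R[\nu,1]$. By definition, each such subspace is spanned by the monomials
\begin{equation*}
\prod_{0\leq i<m}\Bigl(x_{c_i+1}^{n_{\bc,i+1}}\tau_{c_i+1}\tau_{c_i+2}\cdots\tau_{c_{i+1}-1}\Bigr)e(\nu),
\end{equation*}
with $\mathbf{b}\in\mathcal{C}^{\Lam}(\nu)$ and the $c_j$ determined from $\mathbf{b}$ as in \eqref{bc}. Because all indices $\nu_{c_i+1},\ldots,\nu_{c_{i+1}}$ in the $i$-th block coincide with a common value $\nu^{i+1}$, the definition of the grading on $\R[\alpha]$ gives $\deg x_{c_i+1}^{n_{\bc,i+1}}e(\nu)=n_{\bc,i+1}(\alpha_{\nu^{i+1}},\alpha_{\nu^{i+1}})$ and $\deg(\tau_{c_i+1}\cdots\tau_{c_{i+1}-1})e(\nu)=-(b_{i+1}-1)(\alpha_{\nu^{i+1}},\alpha_{\nu^{i+1}})$. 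Summing, the total degree of the displayed monomial is
\begin{equation*}
\sum_{i=0}^{m-1}\bigl[n_{\bc,i+1}-(b_{i+1}-1)\bigr]\cdot(\alpha_{\nu^{i+1}},\alpha_{\nu^{i+1}})=\sum_{i=0}^{m-1}\bigl[n_{\bc,i+1}-(b_{i+1}-1)\bigr]\cdot 2d_{\nu^{i+1}},
\end{equation*}
which is manifestly even. Since $\Tr(\R[\alpha])$ is spanned by the images of these even-degree monomials, any nonzero homogeneous element must lie in a component of even degree, completing the argument.

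There is no real obstacle here — the proof is essentially a degree bookkeeping exercise made possible by Theorem \ref{generator}. The only subtlety to watch is that one must invoke the $\Tr(\R[\alpha])$-to-$Z(\R[\alpha])$ transfer via Lemma \ref{tLam} rather than try to argue directly on $Z(\R[\alpha])$, since the center does not come with an analogous explicit spanning set in the present framework.
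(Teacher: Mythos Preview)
Your proof is correct and follows essentially the same approach as the paper: the paper's one-line argument simply cites Theorem \ref{generator} together with the evenness of $d_{\Lam,\alpha}$, and you have unpacked exactly those two ingredients in detail --- computing the degree of each monomial in the spanning set (\ref{pregenerator}) to see it lies in $2\Z$, and then transferring to $Z(\R[\alpha])$ via Lemma \ref{tLam} using that $d_{\Lam,\alpha}\in 2\Z$.
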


\begin{proof} This follows from Theorem \ref{generator} and the fact that $d_{\Lam,\alpha}$ is even.
\end{proof}

\bigskip

\section{Explicit generators for the cocenter over arbitrary field}

The maximal degree and the minimal degree of the cocenter $\Tr(\R[\alpha])$ of $\R[\alpha]$ were determined in \cite{SVV} and \cite{HS3}. In \cite[Section 4]{HS3}, the authors further constructed some explicit $K$-linear generators for both the maximal degree component and the minimal degree component of $\Tr(\R[\alpha])$ via the so-called ``piecewise dominant sequences'' introduced in \cite{HS3}. The latter results together with the maximal degree results are all presented under the assumption that the ground field has characteristic $0$. The purpose of this section is to remove the characteristic $0$ assumption on the ground field $K$. That says, we shall generalize those results in \cite[Section 4]{HS3} to the ground field $K$ of arbitrary characteristic.  Moreover, we shall characterize the dimension of the degree zero part of the cocenter $\Tr(\R[\alpha])$.

\subsection{Generators of the cocenter}

Let $\alpha\in Q_n^+, \Lam\in P^+$ and $\nu\in I^\alpha$. There is a unique decomposition of $\nu$ as follows: \begin{equation}\label{decomp4}
\nu=(\nu_1,\cdots,\nu_n)=(\underbrace{\nu^{1},\nu^{1},\cdots,\nu^{1}}_{b_{1}\,copies},\cdots,\underbrace{\nu^{m},\nu^{m},\cdots,\nu^{m}}_{b_{m}\,copies}),
\end{equation}
which satisfies that \begin{equation}\label{newrequire}
\nu^{j}\neq\nu^{j+1},\quad \forall\, 1\leq j<m,
\end{equation}
where $m,b_1,\cdots,b_m\in\Z^{\geq 1}$ with $\sum_{j=1}^{m}b_j=n$.

For each $1\leq j\leq m$, we define \begin{equation}\label{elli30}
\ell_j(\nu):=\<h_{\nu^j}, \Lam-\sum_{t=1}^{c_{j-1}}\alpha_{\nu_t}\>,
\end{equation}
where $c_0:=0, c_j:=\sum_{s=1}^{j}b_s, \forall\,1\leq j\leq m$.
When $\nu$ is clear from the context, we shall write $\ell_j$ instead of $\ell_j(\nu)$ for simplicity.

\begin{dfn}\text{(\cite[Definition 4.4]{HS3})}\label{pddfn} Let $\Lam\in P^+$ and $\alpha\in Q_n^+$. We call $\nu=(\nu_1,\cdots,\nu_n)\in I^\alpha$ a {\bf piecewise dominant sequence} with respect to $\Lam$, if for the unique decomposition (\ref{decomp4}) of $\nu$ and any $1\leq j\leq m$, \begin{equation}\label{elli}
\ell_j=\ell_j(\nu)\geq b_j.
\end{equation}
\end{dfn}


\begin{lem}\text{(\cite[Lemma 4.7]{HS3})}\label{princlple criterion} Let $\nu=(\nu_1,\cdots,\nu_n)\in I^\alpha$ and fix the unique decomposition (\ref{decomp4}) of $\nu$. Then $\nu$ is a piecewise dominant sequence with respect to $\Lam$ if and only if for each $1\leq i\leq m$, there is an integer $c_{i-1}+1\leq k'_i\leq c_{i}$ such that
\begin{equation}\label{pdCondition}
\<h_{\nu^i}, \Lam-\sum_{j=1}^{k'_i-1}\alpha_{\nu_j}\>\geq c_i-k'_i+1.\end{equation}
In this case, we denote the maximal value of each $k'_i$ by $k_i$, which can be taken as: \begin{equation}\label{maxki}
k_i=\begin{cases} c_i, &\text{if $\ell_i-2b_i\geq 0$;}\\
\ell_i+2c_{i-1}-c_{i}+1, &\text{if $\ell_i-2b_i\leq -1$.}
\end{cases}
\end{equation}
\end{lem}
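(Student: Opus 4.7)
The plan is to reduce the inequality
$$\<h_{\nu^i}, \Lam-\sum_{j=1}^{k'_i-1}\alpha_{\nu_j}\>\geq c_i-k'_i+1$$
to an explicit linear inequality in $k'_i$ using only two facts: the definition of $\ell_i$, and the equality $\<h_{\nu^i},\alpha_{\nu^i}\>=2$. Once the condition is made elementary, both directions of the equivalence and the explicit formula for $k_i$ follow by bookkeeping.

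First I would split the sum according to the decomposition (\ref{decomp4}). For any $k'_i$ with $c_{i-1}+1\leq k'_i\leq c_i$, the indices $j$ with $c_{i-1}+1\leq j\leq k'_i-1$ satisfy $\nu_j=\nu^i$, so
$$\sum_{j=1}^{k'_i-1}\alpha_{\nu_j}=\sum_{j=1}^{c_{i-1}}\alpha_{\nu_j}+(k'_i-1-c_{i-1})\alpha_{\nu^i}.$$
Pairing with $h_{\nu^i}$ and using $\<h_{\nu^i},\alpha_{\nu^i}\>=2$ together with the definition (\ref{elli30}) of $\ell_i$, the left-hand side becomes $\ell_i-2(k'_i-1-c_{i-1})$. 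The inequality (\ref{pdCondition}) is thus equivalent to
$$k'_i\leq \ell_i+2c_{i-1}-c_i+1.$$

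Next I would use this to prove both the equivalence and the maximum. Existence of some $k'_i\in[c_{i-1}+1,c_i]$ satisfying the bound is equivalent to $c_{i-1}+1\leq \ell_i+2c_{i-1}-c_i+1$, i.e.\ $\ell_i\geq c_i-c_{i-1}=b_i$, which is exactly the piecewise-dominance condition (\ref{elli}). This handles the ``iff''. For the maximum, clearly
$$k_i=\min\bigl\{c_i,\,\ell_i+2c_{i-1}-c_i+1\bigr\}.$$
Comparing the two arguments, $c_i\leq \ell_i+2c_{i-1}-c_i+1$ iff $\ell_i\geq 2b_i-1$. Splitting into the integer cases $\ell_i-2b_i\geq 0$ and $\ell_i-2b_i\leq -1$ (noting that the boundary $\ell_i=2b_i-1$ lands consistently in both formulas) yields the stated expression (\ref{maxki}).

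There is no real obstacle here; the entire argument is the single substitution $\sum_{j=1}^{k'_i-1}\alpha_{\nu_j}=\sum_{j=1}^{c_{i-1}}\alpha_{\nu_j}+(k'_i-1-c_{i-1})\alpha_{\nu^i}$ followed by linear arithmetic. The only care needed is to verify that the case distinction in (\ref{maxki}) is consistent at the boundary $\ell_i-2b_i\in\{-1,0\}$, which I would check explicitly in the write-up.
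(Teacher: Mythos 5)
Your proof is correct, and it is the natural elementary argument: the key step is expanding
$\sum_{j=1}^{k'_i-1}\alpha_{\nu_j}=\sum_{j=1}^{c_{i-1}}\alpha_{\nu_j}+(k'_i-1-c_{i-1})\alpha_{\nu^i}$
and using $\<h_{\nu^i},\alpha_{\nu^i}\>=2$ to reduce \eqref{pdCondition} to $k'_i\leq \ell_i+2c_{i-1}-c_i+1$, after which the equivalence with $\ell_i\geq b_i$ and the formula $k_i=\min\{c_i,\,\ell_i+2c_{i-1}-c_i+1\}$ are immediate arithmetic. Note that the present paper does not reprove this lemma; it simply cites it as Lemma 4.7 of \cite{HS3}, so there is no proof here to compare against. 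Your check of the boundary $\ell_i=2b_i-1$ is the only point requiring care, and you handled it correctly: the paper's case split puts $\ell_i=2b_i-1$ in the second branch, which still evaluates to $c_i$, so the two branches agree there.
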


\begin{lem}\label{refineresidue}
Let $K$ be an arbitrary field. Let $\nu\in I^\alpha$ and $z\in \RR^\Lam_{\nu,1}$. If $\nu$ is not piecewise dominant with respect to $\Lam$, then $z\in[\R[\alpha],\R[\alpha]]$.
\end{lem}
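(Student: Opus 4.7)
The approach is to show each spanning element of $\RR^\Lam_{\nu,1}$ lies in $[\R[\alpha],\R[\alpha]]$ by applying Lemma~\ref{relations2}(1) to a single factor of the product. For any $\mathbf{b}=(b_1,\ldots,b_m)\in \mathcal{C}^\Lam(\nu)$ and any $\mathbf{n}$ with $0\leq n_{\bc,i+1}<\lam_{c_i,\nu}$, the spanning generator
$$y_{\mathbf{b},\mathbf{n}} = \prod_{0\leq i<m}\bigl(x_{c_i+1}^{n_{\bc,i+1}}\tau_{c_i+1}\tau_{c_i+2}\cdots\tau_{c_{i+1}-1}\bigr)e(\nu)$$
factors, for any chosen $0\leq t<m$, as $y_1\cdot x_{c_t+1}^{n_{\bc,t+1}}\tau_{c_t+1}\cdots\tau_{c_{t+1}-1}\cdot y_2\cdot e(\nu)$ with $y_1\in A_{c_t}e(\nu)$ and $y_2\in B_{c_{t+1}}e(\nu)$, since its factors are supported on disjoint consecutive intervals and therefore pairwise commute. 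Lemma~\ref{relations2}(1) then deposits the whole element into $[\R[\alpha],\R[\alpha]]$ as soon as $n_{\bc,t+1}<b_{t+1}-1$; thanks to the constraint $n_{\bc,t+1}\leq \lam_{c_t,\nu}-1$, it thus suffices to exhibit some $t$ for which $\lam_{c_t,\nu}<b_{t+1}$.

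The heart of the argument is the combinatorial claim: if $\mathbf{b}\in\mathcal{C}^\Lam(\nu)$ satisfies $\lam_{c_t,\nu}\geq b_{t+1}$ for \emph{every} $0\leq t<m$, then $\nu$ is piecewise dominant (whose contrapositive supplies exactly the $t$ needed). To verify it, observe that $\mathbf{b}$ refines the canonical decomposition~(\ref{decomp4}); within the $j$-th canonical block (of size $b^*_j$ and residue $\nu^j$), $\mathbf{b}$ cuts out consecutive sub-blocks of sizes $a_1,\ldots,a_{r_j}$ with $\sum_k a_k=b^*_j$. Letting $i_{j,k}$ denote the refinement index of the $k$-th sub-block inside canonical block $j$, and using $\<h_{\nu^j},\alpha_{\nu^j}\>=2$ together with the fact that all residues at positions between $c^*_{j-1}+1$ and $c_{i_{j,k}-1}$ equal $\nu^j$, one finds
$$\lam_{c_{i_{j,k}-1},\nu}=\ell_j(\nu)-2(a_1+\cdots+a_{k-1}),\qquad 1\leq k\leq r_j.$$
Evaluating the hypothesis at $k=r_j$ and using $a_1+\cdots+a_{r_j-1}=b^*_j-a_{r_j}$ gives $\ell_j(\nu)\geq 2b^*_j-a_{r_j}\geq b^*_j$, the last inequality because $a_{r_j}\leq b^*_j$. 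Since this holds for every $j$, $\nu$ is piecewise dominant.

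Combining the two parts finishes the lemma: if $\nu$ is not piecewise dominant and $\mathbf{b}\in\mathcal{C}^\Lam(\nu)$, pick the index $t$ supplied by the combinatorial claim so that $n_{\bc,t+1}\leq b_{t+1}-2<b_{t+1}-1$; Lemma~\ref{relations2}(1) then gives $y_{\mathbf{b},\mathbf{n}}\in[\R[\alpha],\R[\alpha]]$, and summing over the spanning set of $\RR^\Lam_{\nu,1}$ yields $z\in[\R[\alpha],\R[\alpha]]$. The only genuine obstacle is the combinatorial estimate, and the key simplification is to evaluate the assumed lower bound on $\lam_{c_t,\nu}$ only at the \emph{last} sub-block of each canonical block, where the partial sums telescope to $2b^*_j-a_{r_j}\geq b^*_j$ automatically; the factorization into $y_1\in A_{c_t}$ and $y_2\in B_{c_{t+1}}$ is immediate from the disjointness of the supports.
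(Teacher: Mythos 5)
Your proof is correct and takes essentially the same approach as the paper's (which is a one-line citation to the proof of Lemma 4.12 in \cite{HS3} with Corollary 3.20 there swapped for Lemma \ref{relations2} here): reduce to the spanning set, factor each generator around a single chosen block, and invoke Lemma \ref{relations2}(1) at an index $t$ supplied by a combinatorial estimate. The telescoping evaluation at the last sub-block of each canonical block, giving $\ell_j\geq 2b^*_j-a_{r_j}\geq b^*_j$, is a clean way to get the required contrapositive, and the check that $n_{\bc,t+1}\leq \lam_{c_t,\nu}-1\leq b_{t+1}-2$ is exactly what Lemma \ref{relations2}(1) needs.
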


\begin{proof} This can be proved by using the same argument in the proof of \cite[Lemma 4.12]{HS3} with Corollary 3.20 there replaced by Lemma \ref{relations2} here.
\end{proof}

The following definition of $Z^\Lam (\nu)$ is a generalization of \cite[Definition 4.10]{HS3} to arbitrary ground field.

\begin{dfn}\label{sdfns} Let $\nu\in I^\alpha$ be a piecewise dominant sequence with the unique decomposition (\ref{decomp4}). For $1\leq t\leq m$, we define the following elements and subsets of
elements in $\R[\alpha]$:
$$
Z^\Lam(\nu)_t:=\begin{cases}
x_{c_{t-1}+1}^{\ell_t-1}x_{c_{t-1}+2}^{\ell_t-3}\cdots  x_{c_t}^{\ell_t-2b_t+1}e(\nu), &\text{if $\ell_t\geq 2b_t-1$;}\\
x_{c_{t-1}+1}^{\ell_t-1}
x_{c_{t-1}+2}^{\ell_t-3}\cdots  x_{\ell_t+2c_{t-1}-c_t}^{2b_t-\ell_t+1} e_{[\ell_t+2c_{t-1}-c_t+1,c_{t}]}e(\nu)&\text{if $b_t<\ell_t < 2b_t-1$;}\\
e_{[c_t+1,c_{t+1}]}e(\nu)\, &\text{if $\ell_t=b_t$,}
\end{cases}
$$
\footnote{In \cite[Definition 4.10]{HS3}, $t=2b_t-1$ is included in the second case. However, it's not hard to check this can be also included in the first case.}and  $$
\mR^\Lam(\nu)_{t}:=\Biggl\{\prod_{j=1}^d e_{[a_j,a_{j+1}-1]}x^{l_j}_{a_{j+1}-1}e(\nu)\Biggm|\begin{matrix}0\leq l_j<\ell_t,\,\forall\,1\leq j\leq d, \\
c_t+1=a_1<a_2<\cdots<a_d<a_{d+1}=c_{t+1}+1
\end{matrix}\Biggr\}, $$
Furthermore, we set $$
Z^\Lam (\nu):=\prod_{t=1}^{m} Z^\Lam(\nu)_{t},\,\,\,  \mR^\Lam (\nu):=\prod_{t=1}^{m}\mR^\Lam(\nu)_{t},
\,\,\,e(\nu)^{(-)}:=\prod_{t=1}^{m} e_{[c_t+1,c_{t+1}]}e(\nu).
$$
\end{dfn}

Note that by construction, $\nu_{c_t+1}=\nu_{c_t+2}=\cdots=\nu_{c_{t+1}}$, it follows that each $e_{[c_t+1,c_{j+1}]}$ commutes with $e(\nu)$ in the above definition of $e(\nu)^{(-)}$. In particular, the idempotent $e(\nu)^{(-)}$ can be regarded as a ``divided power'' form of $e(\nu)$.

%

\begin{lem}\label{degree1} Let $K$ be an arbitrary field. Suppose $\nu\in I^\alpha$ is a piecewise dominant sequence with respect to $\Lam$, then $\deg(Z^\Lam (\nu))=d_{\Lam,\alpha}$.
\end{lem}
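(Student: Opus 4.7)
The plan is to compute $\deg Z^\Lambda(\nu)$ directly from the definition and match it against $d_{\Lam,\alpha}=2(\alpha,\Lam)-(\alpha,\alpha)$. Since $Z^\Lam(\nu)=\prod_{t=1}^m Z^\Lam(\nu)_t$ and the three pieces live on disjoint strands, I would compute each $\deg Z^\Lam(\nu)_t$ separately and then sum. The first key observation I rely on is that each nilHecke primitive idempotent $e_{[u,v]}=\tau_{w[u,v]}x_{u+1}x_{u+2}^2\cdots x_v^{v-u}$ is homogeneous of degree $0$: the $\tau$'s contribute $-(v-u)(v-u+1)\cdot\tfrac12(\alpha_{\nu^t},\alpha_{\nu^t})$, while $x_{u+1}x_{u+2}^2\cdots x_v^{v-u}$ contributes exactly the opposite (this uses $\nu_u=\cdots=\nu_v=\nu^t$ inside a single block, together with $\deg\tau_r e(\nu)=-(\alpha_{\nu^t},\alpha_{\nu^t})$ and $\deg x_k e(\nu)=(\alpha_{\nu^t},\alpha_{\nu^t})$). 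So only the pure polynomial prefix in each of the three cases contributes to $\deg Z^\Lam(\nu)_t$.

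Next I would compute the exponent sum of that polynomial prefix in each case. In Case~1 ($\ell_t\geq 2b_t-1$) the exponents are $\ell_t-1,\ell_t-3,\ldots,\ell_t-2b_t+1$, an arithmetic progression with $b_t$ terms summing to $b_t(\ell_t-b_t)$. In Case~2 ($b_t<\ell_t<2b_t-1$) the prefix has $\ell_t-b_t$ terms with exponents $\ell_t-1,\ell_t-3,\ldots,2b_t-\ell_t+1$, again summing to $b_t(\ell_t-b_t)$, and the trailing $e_{[\ell_t+2c_{t-1}-c_t+1,c_t]}$ is degree $0$. In Case~3 ($\ell_t=b_t$) only the degree-$0$ idempotent appears, matching the formula $b_t(\ell_t-b_t)=0$. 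Thus in every case
\[
\deg Z^\Lam(\nu)_t \;=\; b_t(\ell_t-b_t)(\alpha_{\nu^t},\alpha_{\nu^t}).
\]

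The remaining step is a symmetric-bilinear-form manipulation. Using $\ell_t=\langle h_{\nu^t},\Lam\rangle-\sum_{s<t}b_s\langle h_{\nu^t},\alpha_{\nu^s}\rangle$ and $\langle h_i,\mu\rangle=2(\alpha_i,\mu)/(\alpha_i,\alpha_i)$, I expand $b_t(\ell_t-b_t)(\alpha_{\nu^t},\alpha_{\nu^t})$ into
\[
b_t(\alpha_{\nu^t},\alpha_{\nu^t})\langle h_{\nu^t},\Lam\rangle \;-\; 2b_t\sum_{s<t}b_s(\alpha_{\nu^s},\alpha_{\nu^t}) \;-\; b_t^2(\alpha_{\nu^t},\alpha_{\nu^t}).
\]
Summing over $t$, the first term equals $2(\alpha,\Lam)$ after rewriting $\langle h_{\nu^t},\Lam\rangle=2(\alpha_{\nu^t},\Lam)/(\alpha_{\nu^t},\alpha_{\nu^t})$ and using $\alpha=\sum_t b_t\alpha_{\nu^t}$, while the remaining two terms combine via $\sum_{s\neq t}+\sum_{s=t}=\sum_{s,t}$ into $-(\alpha,\alpha)$. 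This yields $d_{\Lam,\alpha}=2(\alpha,\Lam)-(\alpha,\alpha)$, as desired.

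No step is really an obstacle here; the only bookkeeping issue worth flagging is the case analysis at the boundary values $\ell_t=b_t$ and $\ell_t=2b_t-1$ (which can be absorbed into the adjacent cases, as the footnote in Definition~\ref{sdfns} points out) and keeping straight that the hypothesis $\ell_t\geq b_t$ from piecewise dominance (Definition~\ref{pddfn}) is what guarantees each nonnegative exponent appearing above really is nonnegative so that the element $Z^\Lam(\nu)_t$ is well-defined. Given that, the proof is a purely combinatorial degree computation.
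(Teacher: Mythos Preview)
Your argument is correct and is essentially the same as the paper's, just fully unpacked: the paper's own proof consists of the single observation that each $e_{[u,v]}$ is homogeneous of degree $0$ and then cites \cite[Lemma~4.11]{HS3} for the remaining polynomial degree count and bilinear-form identity, which is exactly the computation you carry out explicitly.
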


\begin{proof} Note that those idempotents $e_{[u,v]}$ are all homogeneous with degree $0$. The lemma follows directly from \cite[Lemma 4.11]{HS3}.
\end{proof}


%
%
%
%

\medskip
\noindent
\textbf{Proof of Theorem \ref{mainthm2}:} Part (2) is a consequence of Part (3) and Lemma \ref{relations2}. It remains to show Part (1) and Part (3).

Replacing \cite[Lemma 4.12]{HS3} (which is valid over characteristic $0$ field) with Lemma \ref{refineresidue} (which is valid over arbitrary ground field) in the argument, one can prove Part (3) by the same argument as that used in the proof of Part (3) of \cite[Theorem 1.3]{HS3},

It remains to prove Part (2). By Theorem \ref{generator} and Lemma \ref{refineresidue}, we can obtain a set of spanning elements of $\Tr(\R[\alpha])$ which are the images of some elements of the form \begin{equation}\label{range1}
\prod_{0\leq i<m'}\Bigl(x_{c'_i+1}^{n_{\bc',i+1}}\tau_{c'_i+1}\tau_{c'_i+2}\cdots\tau_{c'_{i+1}-1}\Bigr)e(\nu),
\end{equation}
where $\mathbf{b}'=(b'_1,\cdots,b'_{m'})\in\mathcal{C}^\Lam(\nu), 0\leq n_{\bc',i+1}<\lam_{c'_i,\nu}, c'_i:=\sum_{j=1}^{i}b'_j, \forall\,0\leq i<m'$, and $\nu$ is piecewise dominant with respect to $\Lam$.

Now let $\nu\in I^\alpha$ be a piecewise dominant sequence with respect to $\Lam$ with the unique decomposition (\ref{decomp4}). Following the proof of \cite[Theorem 1.3]{HS3}, we define $$
Z'(\nu)=Z'(\nu)_1Z'(\nu)_2\cdots Z'(\nu)_m, $$
where for each $1\leq i\leq m$, $$
Z'(\nu)_i:=\begin{cases}
x_{c_{i-1}+1}^{\ell_i-1}
x_{c_{i-1}+2}^{\ell_i-3}\cdots  x_{c_i}^{\ell_i-2b_i+1}e(\nu), &\text{if $\ell_i\geq 2b_i$;}\\
x_{c_{i-1}+1}^{\ell_i-1}
x_{c_{i-1}+2}^{\ell_i-3}\cdots  x_{k_i}^{\ell_i-2(k_i-c_{i-1})+1}\tau_{k_i}\cdots\tau_{c_i-2}\tau_{c_i-1}e(\nu), &\text{if $\ell_i\leq 2b_i-1$,}
\end{cases}
$$
where $k_i$ is as defined in (\ref{maxki}).

By the same argument used in the proof of Part (1) of \cite[Theorem 1.3]{HS3}, in order to give a spanning set for the maximal degree component of $\Tr(\R[\alpha])$, we only need to consider the monomial of the form $Z'(\nu)$. Now applying Corollary \ref{relations2}(3), we see that $Z'(\nu)$ is some $\Z$ scalar multiple of $Z^\Lam(\nu)$ whose degree is exactly $d_{\Lam,\alpha}$ by Lemma \ref{degree1}. In particular, this means the maximal degree of $\Tr(\R[\alpha])$ is $d_{\Lam,\alpha}$. Combining these with the discussion in the last two paragraphs, we can deduce that the image of those $Z^\Lam(\nu)$ can form a spanning set for the degree $d_{\Lam,\alpha}$ component of the cocenter $\Tr(\R[\alpha])$. This completes the proof of Part (1) of the theorem.
\hfill\qed

\medskip

\begin{rem}
Using Lemma \ref{idempn}, it is easy to see that $$
(v-u+1)!e_{[u,v]}e(\nu)-e(\nu)\in [\R[\alpha],\R[\alpha]]$$
whenever $\nu_{u}=\nu_{u+1}=\cdots=\nu_{v}$. From this equality one can immediately recover \cite[Theorem 1.3 (1), (2)]{HS3} from Theorem \ref{mainthm2} (1), (2).
\end{rem}

As a direct application of Theorem \ref{mainthm2}, we can deduce Theorem \ref{mainthm1}, which removes the characteristic $0$ assumption in \cite[Theorem 3.31(a)]{SVV} and \cite[Corollary 4.21]{HS3}.


\medskip
\noindent
\textbf{Proof of Theorem \ref{mainthm1}:} Suppose that $\bigl(\Tr(\R[\alpha])\bigr)_j\neq 0$. Then $j\geq 0$ by \cite[Proposition 3.14]{HS3} or Theorem \ref{mainthm2} (3). By the proof of Theorem \ref{mainthm2}, we see that $j\leq d_{\Lam,\alpha}$. Applying Corollary we get that $j\in 2\Z$. This proves the first part of the theorem. The second part of the theorem follows from Lemma \ref{tLam} and Corollary \ref{2z}.
\hfill\qed

\subsection{On the degree zero component of  $\Tr(\RR^\Lam_\alpha)$}

The purpose of this subsection is to use our main result Theorem \ref{mainthm2} to show that $\dim\Tr(\RR^\Lam_\alpha)\bigr)_{0}=\dim V(\Lam)_{\Lam-\alpha}$ holds for arbitrary ground field $K$. In particular, we shall give a $K$-basis of $\Tr(\RR^\Lam_\alpha)\bigr)_{0}$.

Recall that $\R[\alpha]\lmod$ is the category of finite dimensional (ungraded) $\R[\alpha]$-modules. We set $m_{\alpha,\Lam}:=\dim V(\Lam)_{\Lam-\alpha}$. Then $m_{\alpha,\Lam}=\#\Irr\R[\alpha]\lmod$ by \cite[Theorem 6.1]{KK}. Let $L_1,\cdots, L_{m_{\alpha,\Lam}}$ be the complete set of pairwise non-isomorphic ungraded irreducible modules in $\RR^\Lam_\alpha\lmod$. For each $1\leq j\leq m_{\alpha,\Lam}$, let $P_j$ be the project cover of $L_j$ in $\R[\alpha]\lmod$. Let $\circledast$ be the duality functor on $\R[\alpha]\lgmod$ which is induced by the anti-involution $*$ of $\R[\alpha]$. Since $\R[\alpha]$ is a $\Z$-graded Artin algebra, we know that both simple module $L_j$ and indecomposable module $P_j$ admit $\Z$-graded lifts (\cite{GG}).

\begin{lem}\text{(\cite[Lemma 3.5]{Br2})} For each $1\leq j\leq m_{\alpha,\Lam}$, there is a $\Z$-graded lift $\mathbb{L}_j$ of $L_j$ such that $\bigl(\mathbb{L}_j\bigr)^{\circledast}\cong\mathbb{L}_j$.
\end{lem}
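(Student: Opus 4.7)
The plan is to take any $\Z$-graded lift of $L_j$ (which exists by \cite{GG}) and adjust its grading shift so that it becomes self-dual under $\circledast$. The key technical point is to show that the resulting ``duality shift'' is always even.

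First I would show that $L_j^{\circledast}\cong L_j$ as ungraded modules. Since $e(\nu)^*=e(\nu)$ for every $\nu\in I^\alpha$, the formal $I^\alpha$-characters of $L_j$ and $L_j^\circledast$ coincide: $\dim_K e(\nu)L_j^\circledast = \dim_K e(\nu)L_j$ for all $\nu$. As formal characters separate irreducibles in cyclotomic KLR theory, this forces $L_j^\circledast\cong L_j$ as ungraded modules. Consequently, for any graded lift $\tilde L_j$ of $L_j$, the module $\tilde L_j^\circledast$ is again a graded lift of $L_j$, and uniqueness of graded lifts up to shift produces an integer $d_j$ with $\tilde L_j^\circledast\cong\tilde L_j\<d_j\>$. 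The existence of the desired $\mathbb L_j$ then reduces to proving $d_j\in 2\Z$: once this is in hand, $\mathbb L_j:=\tilde L_j\<d_j/2\>$ satisfies $\mathbb L_j^\circledast\cong\tilde L_j^\circledast\<-d_j/2\>\cong\tilde L_j\<d_j/2\>=\mathbb L_j$ directly from the identities $(M\<k\>)^\circledast\cong M^\circledast\<-k\>$ and $\circledast^2\cong\id$.

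To show $d_j$ is even I would exploit the graded symmetric structure of Lemma \ref{tLam}. Choose a homogeneous primitive idempotent $f_j$ of degree $0$ so that $P_j:=\R[\alpha]f_j$ is a graded projective cover with graded head $\tilde L_j$. The $(\R[\alpha],\R[\alpha])$-bimodule isomorphism $(\R[\alpha])^\circledast\cong\R[\alpha]\<-d_{\Lam,\alpha}\>$ identifies $P_j^\circledast$ with an indecomposable graded projective, hence $P_j^\circledast\cong P_{j'}\<-d_{\Lam,\alpha}\>$ for some index $j'$. Comparing graded heads and invoking the first step forces $j'=j$, yielding $d_j=-d_{\Lam,\alpha}$. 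It now suffices to observe that $d_{\Lam,\alpha}=2(\alpha,\Lam)-(\alpha,\alpha)$ is always even: since $(\alpha_i,\alpha_i)=2d_i$ for every $i\in I$, a direct expansion of $(\alpha,\alpha)=\sum_{i,j}k_ik_j(\alpha_i,\alpha_j)$ gives $(\alpha,\alpha)\in 2\Z$, which is also consistent with Corollary \ref{2z}. Therefore $d_j$ is even and the shift $d_j/2$ is a legitimate grading shift.

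The main obstacle is the middle step: carefully tracking the $(\R[\alpha],\R[\alpha])$-bimodule duality from Lemma \ref{tLam} through the projective cover to identify $P_j^\circledast$ up to grading shift, and using the ungraded identification $L_j^\circledast\cong L_j$ to force this projective to be (a shift of) $P_j$ itself rather than some $P_{j'}$ with $j'\neq j$. Once this identification is secured, halving the even shift $d_j$ completes the argument.
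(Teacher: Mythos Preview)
The paper does not give its own proof; the lemma is quoted from \cite[Lemma~3.5]{Br2}. Your reduction in the first two paragraphs to showing that the ``duality shift'' $d_j$ is even is correct and is the standard opening. The parity argument you propose, however, breaks down. Two points: (i) dualising exchanges heads and socles, so $\head(P_j^{\circledast})\cong(\soc P_j)^{\circledast}$ rather than $\tilde L_j^{\circledast}$, and ``comparing graded heads'' therefore only relates $d_j$ to the (as yet unknown) socle shift of $P_j$ without isolating $d_j$; (ii) the bimodule isomorphism $(\R[\alpha])^{\circledast}\cong\R[\alpha]\langle-d_{\Lam,\alpha}\rangle$ does not force the specific shift $-d_{\Lam,\alpha}$ on an individual summand $P_j^{\circledast}$, since the indecomposable summands of $\R[\alpha]$ as a left module already occur with many different grading shifts. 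In fact $d_j$ changes by $-2s$ when $\tilde L_j$ is replaced by $\tilde L_j\langle s\rangle$, so it cannot equal the fixed number $-d_{\Lam,\alpha}$; and the paper's Corollary~\ref{Pjcor}, which does give $\mathbb P_j^{\circledast}\cong\mathbb P_j\langle-d_{\Lam,\alpha}\rangle$, is \emph{derived from} the present lemma, so invoking it here would be circular.

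A correct route to $d_j\in 2\Z$ bypasses the symmetric structure entirely. Fix $\nu\in I^\alpha$ with $e(\nu)\tilde L_j\neq 0$. Every homogeneous element of $e(\nu)\R[\alpha]e(\nu)$ has even degree: each $x_ke(\nu)$ has degree $2d_{\nu_k}$, and for $w\in\Sym_n$ with $w\nu=\nu$ the degree of $\tau_we(\nu)$ equals $-\sum_{a<b,\ w(a)>w(b)}(\alpha_{\nu_a},\alpha_{\nu_b})$, in which the terms with $\nu_a=\nu_b$ are individually even while those with $\nu_a\neq\nu_b$ can be paired off (since $w$ preserves the level sets of $\nu$, for each unordered pair $\{i,j\}\subset I$ the number of such inversions is even). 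Because $\tilde L_j$ is simple, $e(\nu)\tilde L_j$ is generated over $e(\nu)\R[\alpha]e(\nu)$ by any nonzero homogeneous vector, so all of its nonzero graded pieces lie in degrees of a single parity. Finally $e(\nu)^*=e(\nu)$ gives $\overline{\dim_q\bigl(e(\nu)\tilde L_j\bigr)}=q^{d_j}\dim_q\bigl(e(\nu)\tilde L_j\bigr)$, so its extremal degrees sum to $-d_j$; having the same parity, $d_j$ is even.
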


For each $1\leq j\leq m_{\alpha,\Lam}$, we denote by $\mathbb{P}_j$ the graded projective cover of $\mathbb{L}_j$ in $\R[\alpha]\lgmod$. Then $\mathbb{P}_j$ is a $\Z$-graded lift of $P_j$. Let $q$ be an indeterminate over $\Z$.
For each $M\in \R[\alpha]\lgmod$, we define the graded dimension of $M$ as $\dim_q(M):=\sum_{k\in\Z}\dim (M_k)q^k$. The Grothendieck group of $\R[\alpha]\lgmod$ naturally becomes a $\Z[q,q^{-1}]$-module, where $q^k[M]:=[M\<k\>]$ for any $k\in\Z$ and $M\in \R[\alpha]\lgmod$.

\begin{cor}\label{Pjcor} In the Grothendieck group of $\R[\alpha]\lgmod$, we have \begin{equation}\label{Pdec1}
[\R[\alpha]]=\sum_{k=1}^{m_{\alpha,\Lam}}\dim_q(\mathbb{L}_k)[\mathbb{P}_k].
\end{equation}
Moreover, for each $1\leq j\leq m_{\alpha,\Lam}$, we have $\mathbb{P}_j^{\circledast}\cong\mathbb{P}_j\<-d_{\Lam,\alpha}\>$. In particular, $\mathbb{P}_j$ has a unique simple head $\mathbb{L}_j$ and a unique simple socle
$\mathbb{L}_j\<d_{\Lam,\alpha}\>$.
\end{cor}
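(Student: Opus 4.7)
My plan is to prove both assertions by combining the graded Krull--Schmidt decomposition of $\R[\alpha]$ into indecomposable graded projectives with the bimodule isomorphism $\R[\alpha]^{\circledast}\cong\R[\alpha]\<-d_{\Lam,\alpha}\>$ of Lemma \ref{tLam} and the self-duality $\mathbb{L}_k^{\circledast}\cong\mathbb{L}_k$ just established.

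For the identity \eqref{Pdec1}, I would decompose $\R[\alpha]$ in $\R[\alpha]\lgmod$ as $\bigoplus_{k,m}\mathbb{P}_k\<m\>^{\oplus a_{k,m}}$ with $a_{k,m}\in\Z_{\geq 0}$, and read off the multiplicities from the head: the graded Wedderburn decomposition $\R[\alpha]/\rad\R[\alpha]\cong\bigoplus_{k=1}^{m_{\alpha,\Lam}}\End_K(\mathbb{L}_k)$ (as graded algebras) gives $a_{k,m}=\dim(\mathbb{L}_k)_{-m}$. The self-duality of $\mathbb{L}_k$ yields $\dim(\mathbb{L}_k)_{-m}=\dim(\mathbb{L}_k)_{m}$, whence $\sum_m a_{k,m}q^m=\dim_q(\mathbb{L}_k)$ and \eqref{Pdec1} follows.

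For the second assertion, since $\R[\alpha]$ is symmetric, and hence self-injective, the contravariant exact functor $\circledast$ carries each indecomposable graded projective to an indecomposable graded projective; thus $\mathbb{P}_j^{\circledast}\cong\mathbb{P}_{\sigma(j)}\<s_j\>$ for some permutation $\sigma$ of $\{1,\dots,m_{\alpha,\Lam}\}$ and integers $s_j$. Applying $\circledast$ to \eqref{Pdec1} and comparing with $\R[\alpha]^{\circledast}\cong\R[\alpha]\<-d_{\Lam,\alpha}\>$, the $\Z[q,q^{-1}]$-linear independence of the classes $[\mathbb{P}_j]$ in the graded Grothendieck group forces
$$\dim_q(\mathbb{L}_{\sigma^{-1}(j)})\,q^{s_{\sigma^{-1}(j)}}=q^{-d_{\Lam,\alpha}}\dim_q(\mathbb{L}_j)\qquad(1\leq j\leq m_{\alpha,\Lam}).$$
Once $\sigma=\id$ is known, this forces $s_j=-d_{\Lam,\alpha}$ for every $j$, proving $\mathbb{P}_j^{\circledast}\cong\mathbb{P}_j\<-d_{\Lam,\alpha}\>$. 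The head/socle statement is then immediate: by definition $\head(\mathbb{P}_j)=\mathbb{L}_j$, which dualizes to $\soc(\mathbb{P}_j^{\circledast})\cong\mathbb{L}_j^{\circledast}\cong\mathbb{L}_j$, and substituting $\mathbb{P}_j^{\circledast}\cong\mathbb{P}_j\<-d_{\Lam,\alpha}\>$ gives $\soc(\mathbb{P}_j)\cong\mathbb{L}_j\<d_{\Lam,\alpha}\>$.

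The main obstacle is justifying $\sigma=\id$, i.e.\ that the graded Nakayama permutation of $\R[\alpha]$ is trivial. I would deduce this directly from the bimodule isomorphism of Lemma \ref{tLam}: comparing the two-sided socle of $\R[\alpha]$ with the two-sided head of $\R[\alpha]\<-d_{\Lam,\alpha}\>$ in the category of graded $\R[\alpha]$-bimodules, and using the Wedderburn decomposition together with the self-duality of every $\mathbb{L}_k$, one obtains $\soc(\mathbb{P}_j)\cong\mathbb{L}_j\<d_{\Lam,\alpha}\>$ as graded left modules, which is exactly the triviality of $\sigma$. This is the graded analogue of the classical fact that a symmetric algebra has trivial Nakayama permutation, the only new feature being bookkeeping of the degree shift $-d_{\Lam,\alpha}$ coming from the symmetrizing form.
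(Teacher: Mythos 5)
Your proof is correct and follows essentially the same route as the paper: derive \eqref{Pdec1} from the projective decomposition of $\R[\alpha]$, apply $\circledast$ together with Lemma \ref{tLam} and the self-duality of the $\mathbb{L}_k$, and pin down the shift by $\Z[q,q^{-1}]$-linear independence of the $[\mathbb{P}_j]$. You are somewhat more explicit than the paper about the one nontrivial input — that the graded Nakayama permutation is trivial — which the paper dispatches with the terse remark that $(P_j)^\circledast\cong P_j$ ungraded; your sketch via the bimodule isomorphism of Lemma \ref{tLam} is a correct way to justify that step.
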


\begin{proof} The equality (\ref{Pdec1}) follows from the equality $\Hom_{\R[\alpha]}(\R[\alpha],\mathbb{L}_k)\cong \mathbb{L}_k$. Applying Lemma \ref{tLam} we get that $$
q^{-d_{\Lam,\alpha}}[\R[\alpha]]=[(\R[\alpha])^{\circledast}]=\sum_{j=1}^{m_{\alpha,\Lam}}\dim_q(\mathbb{L}_j)[(\mathbb{P}_j)^{\circledast}].
$$
A priori, we know that $({P}_j)^{\circledast}$ is isomorphic to ${P}_j$ if we forget the $\Z$-grading. Thus $(\mathbb{P}_j)^{\circledast}\cong \mathbb{P}_j\<a_j\>$ for some $a_j\in\Z$. Note also that $\{[\mathbb{P}_j]|1\leq j\leq m_{\alpha,\Lam}\}$ are $\Z[q,q^{-1}]$-linearly independent. Combining this with the above equality we can deduce that $\mathbb{P}_j^{\circledast}\cong\mathbb{P}_j\<-d_{\Lam,\alpha}\>$ for each
$1\leq j\leq m_{\alpha,\Lam}$. In particular, $\mathbb{P}_j$ has a unique simple head $\mathbb{L}_j$ and a unique simple socle
$\mathbb{L}_j\<d_{\Lam,\alpha}\>$.
\end{proof}

The following result generalize \cite[Theorem 3.31(c)]{SVV} to arbitrary ground field.


\medskip
\noindent
\textbf{Proof of Theorem \ref{mainthm3}:} By \cite[\S6, Exercise 14]{CR}, if $e,f\in\R[\alpha]$ are two primitive idempotents such that $\R[\alpha]e\cong\R[\alpha]f$, then $f=aea^{-1}$ for some invertible element $a\in(\R[\alpha])^\times$, hence $f-e\in[\R[\alpha],\R[\alpha]]$.
Combining this with \cite[Proposition 5.8]{GG}, we can assume without loss of generality that for each $1\leq i\leq m_{\alpha,\Lam}$, $f_i$ is a degree $0$ homogeneous primitive idempotent in $\RR^\Lam_\alpha$ such that $\mathbb{P}_i\cong\RR^\Lam_\alpha f_i$. By Theorem \ref{mainthm2} (2), the degree zero component $\Tr(\R[\alpha])_0$ of $\Tr(\RR^\Lam_\alpha)$ is generated by $ae(\nu)+[\R[\alpha],\R[\alpha]]$, where $\nu$ is a piecewise dominant sequence with respect to $\Lam$ and $a$ is an idempotent with $ae(\nu)=e(\nu)a$. In particular, $ae(\nu)=e(\nu)a$ is an idempotent. It follows from  \cite[\S6, Exercise 14]{CR} that $ae(\nu)\in \sum_{t=1}^{m_{\alpha,\Lam}}a_tf_t+[\R[\alpha],\R[\alpha]]$ for some $a_t\in \N$, and we have $$\dim\, \bigl(\Tr(\RR^\Lam_\alpha)\bigr)_{0}\leq m_{\alpha,\Lam}.
$$
It remains to show that $\dim\Tr(\RR^\Lam_\alpha)_{0}\geq m_{\alpha,\Lam}$. Without loss of generality, we can assume that $K$ is an algebraically closed field by \cite[Proposition 2.1]{SVV}.

We define the basic algebra of $\R[\alpha]$ as follows:  $$
B^\Lam_\alpha:=\End_{\R[\alpha]}\bigl(\oplus_{i=1}^{m_{\alpha,\Lam}}\mathbb{P}_i\bigr).
$$
Then $B^\Lam_\alpha$ is $\Z$-graded and $Z(\RR^\Lam_\alpha)\cong Z(B^\Lam_\alpha)$ as $\Z$-graded algebra. In particular, $$
\dim \Tr(\RR^\Lam_\alpha)_{0}=\dim Z(\RR^\Lam_\alpha)_{d_{\Lam,\alpha}}=\dim Z(B^\Lam_\alpha)_{d_{\Lam,\alpha}} .
$$
It remains to show that $\dim Z(B^\Lam_\alpha)_{d_{\Lam,\alpha}}\geq m_{\alpha,\Lam}$.

Applying Corollary \ref{Pjcor}, $\head(\mathbb{P}_i)\cong\soc(\mathbb{P}_i)\<-d_{\Lam,\alpha}\>$ for each $1\leq i\leq m_{\alpha,\Lam}$. Therefore, we can fix an isomorphism $\psi_i: \head(\mathbb{P}_i)\cong\soc(\mathbb{P}_i)\<-d_{\Lam,\alpha}\>$, and a homomorphism $\theta_i\in B^\Lam_\alpha$ for each $1\leq i\leq m_{\alpha,\Lam}$, such that \begin{enumerate}
\item[1)] $\im(\theta_i)=\soc \mathbb{P}_i$; and
\item[2)] $\theta_i$ induces the isomorphism $\psi_i$;
\item[3)] $\theta_i(\mathbb{P}_j)=0$ for any $1\leq j\leq m_{\alpha,\Lam}$ with $j\neq i$.
\end{enumerate}

It is clearly that each $\theta_i$ is homogeneous of degree $d_{\Lam,\alpha}$, and the maps $\{\theta_i|1\leq i\leq m_{\alpha,\Lam}\}$ are $K$-linearly independent. It remains to show that $\theta_i\in Z(B^\Lam_\alpha)$ for each $1\leq i\leq m_{\alpha,\Lam}$. For any $\varphi\in\Hom_{\R[\alpha]}(\mathbb{P}_j,\mathbb{P}_k)$ with either $j\neq i$ or $k\neq i$, it is easy to see that $\varphi\circ\theta_i=0=\theta_i\circ\varphi$ (by considering them as elements in $B_\alpha$). Therefore, it suffices to show that for any $\psi\in\End_{\R[\alpha]}(\mathbb{P}_i)$, we have $\psi\circ\theta_i=\theta_i\circ\psi$.

Let $\psi\in\End_{\R[\alpha]}(\mathbb{P}_i)$. Suppose that $\psi$ is not injective. Then $\psi(\soc \mathbb{P}_i)=0$ and $\im(\psi)\subseteq\rad \mathbb{P}_i$. In this case, it is again easy to see that $\psi\circ\theta_i=0=\theta_i\circ\psi$ because
$\rad \mathbb{P}_i=\ker\theta_i$. Now assume that $\psi$ is injective and hence $\psi$ is an automorphism of $\mathbb{P}_i$. Recall that $\mathbb{L}_i=\head \mathbb{P}_i$. We fix an element $z_i\in \mathbb{P}_i\setminus\rad \mathbb{P}_i$. Then $\mathbb{P}_i=\R[\alpha]z_i$ because $\mathbb{L}_i$ is the unique simple head of $\mathbb{P}_i$. Note that the automorphism $\psi$ must induce an automorphism of $\head(\mathbb{P}_i)=\mathbb{L}_i$ and $\End_{\R[\alpha]}(\mathbb{L}_i)=K$ (because $K$ is algebraically closed). It follows that we can find a nonzero scalar $0\neq c\in K$ and an element $u_i\in\rad \mathbb{P}_i$ such that $$
\psi(z_i)=cz_i+u_i .
$$
Therefore, we can write $\psi=c\id_{\mathbb{P}_i}+\psi'$, where $\psi'$ is a well-defined endomorphism of $\mathbb{P}_i$ which is determined by $\psi'(az_i):=au_i, \forall\,a\in\R[\alpha]$, because both $\psi$ and $c\id_{\mathbb{P}_i}$ are well-defined.
Since $\psi'$ is not injective, we have $\psi'\circ\theta_i=0=\theta_i\circ\psi'$ by the discussion at the beginning part of this paragraph . Note that $c\id_{\mathbb{P}_i}\circ\theta_i=c\theta_i=\theta_i\circ c\id_{\mathbb{P}_i}$. It follows that $\psi\circ\theta_i=\theta_i\circ\psi$. This completes the proof of the theorem.
\hfill\qed\medskip

Let $\alpha\in Q_n^+$. Let $\mathscr{P\!D}_\alpha$ be the set of piecewise dominant sequences in $I^\alpha$ with respect to $\Lam$. We define $$
e_{\alpha,\Lam}:=\sum_{\nu\in\mathscr{P\!D}_\alpha}e(\nu),\,\, e_{\alpha,\Lam}^{(-)}:=\sum_{\nu\in\mathscr{P\!D}_\alpha}e(\nu)^{(-)},
$$

\begin{cor} There are Morita equivalences: $$
\R[\alpha] \overset{Morita}{\sim} e_{\alpha,\Lam}\R[\alpha]e_{\alpha,\Lam}\overset{Morita}{\sim} e_{\alpha,\Lam}^{(-)}\R[\alpha]e_{\alpha,\Lam}^{(-)}.
$$
\end{cor}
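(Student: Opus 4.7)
The plan is to apply the standard criterion that, for an idempotent $e$ in a unital ring $A$, one has a Morita equivalence $A \sim eAe$ if and only if $AeA = A$; equivalently, if and only if every indecomposable projective $A$-module appears as a direct summand of $Ae$. Since each $e(\nu)^{(-)}$ commutes with $e(\nu)$ and satisfies $e(\nu)^{(-)} = e(\nu) e(\nu)^{(-)}$, there is an inclusion $\R[\alpha] e_{\alpha,\Lam}^{(-)} \R[\alpha] \subseteq \R[\alpha] e_{\alpha,\Lam} \R[\alpha]$, so it is enough to prove that $e_{\alpha,\Lam}^{(-)}$ is a full idempotent; the fullness of $e_{\alpha,\Lam}$ then follows automatically, yielding both Morita equivalences at once.

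First I would note that each $e(\nu)^{(-)}$ is genuinely an idempotent in $\R[\alpha]$: the defining factors $e_{[c_t+1,c_{t+1}]}$ are homogeneous degree-zero idempotents living in pairwise disjoint ``nilHecke parts'' of $\R[\alpha]$, so they commute with each other and with $e(\nu)$. Fix a complete set $\{f_j\}_{j=1}^{m_{\alpha,\Lam}}$ of primitive idempotents as in Theorem~\ref{mainthm3}, and for each $\nu \in \mathscr{P\!D}_\alpha$ decompose $e(\nu)^{(-)} = \sum_l g_l^{(\nu)}$ inside $\R[\alpha]$ into a sum of pairwise orthogonal primitive idempotents. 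Each $g_l^{(\nu)}$ generates an indecomposable projective isomorphic to some $\R[\alpha]f_{\sigma(\nu,l)}$ and is therefore conjugate to $f_{\sigma(\nu,l)}$ by an invertible element; the commutator identity $u h u^{-1} - h = [u, h u^{-1}]$ then gives
\[
e(\nu)^{(-)} + [\R[\alpha],\R[\alpha]] = \sum_l f_{\sigma(\nu,l)} + [\R[\alpha],\R[\alpha]]
\]
inside $\Tr(\R[\alpha])_0$.

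Next, suppose for contradiction that some index $j_0$ is missing, i.e., $\sigma(\nu,l) \neq j_0$ for every $\nu \in \mathscr{P\!D}_\alpha$ and every $l$. Then the spanning set $\{e(\nu)^{(-)} + [\R[\alpha],\R[\alpha]] : \nu \in \mathscr{P\!D}_\alpha\}$ from Theorem~\ref{mainthm2}(2) would be contained in the proper subspace $\mathrm{span}_K\{f_k + [\R[\alpha],\R[\alpha]] : k \neq j_0\}$ of $\Tr(\R[\alpha])_0$, contradicting the basis statement of Theorem~\ref{mainthm3}. Hence every $\mathbb{P}_j$ occurs as a summand of some $\R[\alpha] e(\nu)^{(-)}$ with $\nu \in \mathscr{P\!D}_\alpha$, which proves that $e_{\alpha,\Lam}^{(-)}$ is full and establishes both Morita equivalences.

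The main delicate step I anticipate is the bookkeeping that connects the idempotent decompositions of $e(\nu)^{(-)}$ with the basis elements $f_j + [\R[\alpha],\R[\alpha]]$: one must verify that the lifts $g_l^{(\nu)}$ can be chosen as genuine primitive idempotents of $\R[\alpha]$ (routine since $\R[\alpha]$ is finite-dimensional over a field and any idempotent refines to a sum of primitive ones), and that ``conjugate primitive idempotents produce the same class in the cocenter'' is invoked cleanly. Beyond that, the argument is simply a dimension-count inside $\Tr(\R[\alpha])_0$, exploiting the two complementary descriptions of this space supplied by Theorem~\ref{mainthm2}(2) and Theorem~\ref{mainthm3}.
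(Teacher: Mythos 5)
Your proposal is correct and follows essentially the same route as the paper: you use Theorem~\ref{mainthm2}(2) to identify the $e(\nu)^{(-)}$ as generators of $\Tr(\R[\alpha])_0$, decompose each into primitive idempotents and reduce to classes $f_j$ in the cocenter via conjugacy, then argue by contradiction against the basis statement of Theorem~\ref{mainthm3} to conclude that every indecomposable projective occurs in some $\R[\alpha]e(\nu)^{(-)}$, giving fullness of $e_{\alpha,\Lam}^{(-)}$; the Morita equivalence for $e_{\alpha,\Lam}$ then follows exactly as in the paper since $e(\nu)^{(-)}$ is a subidempotent of $e(\nu)$. The only cosmetic difference is that you phrase the second reduction via the ideal inclusion $\R[\alpha]e_{\alpha,\Lam}^{(-)}\R[\alpha]\subseteq\R[\alpha]e_{\alpha,\Lam}\R[\alpha]$, while the paper phrases it as $\R[\alpha]e_{\alpha,\Lam}^{(-)}$ being a direct summand of $\R[\alpha]e_{\alpha,\Lam}$; these are interchangeable.
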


\begin{proof} By Theorem \ref{mainthm2} (2), the degree zero component $\Tr(\R[\alpha])_0$ of $\Tr(\RR^\Lam_\alpha)$ is generated by $e(\nu)^{(-)}+[\R[\alpha],\R[\alpha]]$, where $\nu$ is a piecewise dominant sequence with respect to $\Lam$. Suppose that there exists some primitive idempotent $f_i$ such that the ungraded indecomposable projective module $\R[\alpha]f_i$ does not occur as a direct summand of $\R[\alpha]e(\nu)^{(-)}$ for any piecewise dominant sequence $\nu$. Then we can deduce from the proof of Theorem \ref{mainthm3} that $\Tr(\R[\alpha])_0$ can be generated by a subset  $\{f_{j}+[\R[\alpha],\R[\alpha]]|1\leq j\leq m_{\alpha,\Lam},\,j\neq i\}$ with cardinality lesser than $m_{\alpha,\Lam}$, which contradicts to Theorem \ref{mainthm3}.
This proves $\R[\alpha] \overset{Morita}{\sim} e_{\alpha,\Lam}^{(-)}\R[\alpha]e_{\alpha,\Lam}^{(-)}$. As a result, we can also deduce that
$\R[\alpha] \overset{Morita}{\sim} e_{\alpha,\Lam}\R[\alpha]e_{\alpha,\Lam}$ because $\R[\alpha]e_{\alpha,\Lam}^{(-)}$ is a direct summand of $\R[\alpha]e_{\alpha,\Lam}$.
\end{proof}

\bigskip


\begin{thebibliography}{2}


\bibitem{BGHL} {\sc A.~Beliakova, Z.~Guliyev, K.~Habiro, and A.~Lauda}, {\em Trace as an alternative
decategorification functor}, Acta Math. Vietnam., {\bf 39} (2014), 425--480.

\bibitem{BHLZ} {\sc A.~Beliakova, K.~Habiro, A.~Lauda and M.~Zivkovic}, {\em Trace decategorification of
categorified quantum $\mathfrak{sl}_2$}, Math. Ann., {\bf 367} (2017), 397--440.

\bibitem{BHLW} {\sc A.~Beliakova, K.~Habiro, A.~Lauda and B.~Webster}, {\em Current algebras and categorified quantum groups}, J. London Math. Soc., {\bf 95}(2) (2017), 248--276.

\bibitem{Bow}
{\sc C.~Bowman}, {\em The many integral graded cellular bases of Hecke algebras of complex reflection groups}, Amer. J. Math., {\bf 144}(2) (2022), 437--504.



\bibitem{Br2}
{\sc J.~Brundan},  {\em Quiver Hecke algebras and categorification}, in: ``Advances in Representation Theory of Algebras,'' eds: D. Benson et al, EMS Congress Reports, 2013, pp.103--133.

\bibitem{BK:GradedKL}
{\sc J.~Brundan and A.~Kleshchev},  {\em Blocks of cyclotomic {H}ecke algebras and {K}hovanov-{L}auda algebras}, Invent. Math., {\bf 178}
  (2009), 451--484.

\bibitem{CR}
{\sc C.W.~Curtis and I.~Reiner}, {\em Methods of Representation theory, with applications to finite groups
  and orders}, Vol. I, A Wiley Interscience, 1981.

\bibitem{DVV}
{\sc O.~Dudas, M.~Varagnolo and E.~Vasserot}, {\em Categorical actions on unipotent representations of finite unitary groups}, Publ. Math. Inst. Hautes \'Etudes Sci., {\bf 129} (2019), 129--197.

\bibitem{Ev} {\sc A.~Evseev}, {\em RoCK blocks, wreath products and KLR algebras}, Math. Ann., {\bf 369} (2017), 1383--1433.

\bibitem{EK} {\sc A.~Evseev and A.~Kleshchev}, {\em Blocks of symmetric groups, semicuspidal KLR algebras and zigzag Schur-Weyl duality}, Ann. of Math., {\bf 188} (2018), 453--512.

\bibitem{GG}
{\sc R.~Gordon and E.L.~Green}, {\em Graded Artin algebras}, J. Algebra, {\bf 76} (1982), 111--137.


\bibitem{HM}
{\sc J.~Hu and A.~Mathas}, {\em Graded cellular bases for the cyclotomic Khovanov-Lauda-Rouquier algebras of type $A$}, Adv. Math., {\bf 225}(2) (2010), 598--642.


\bibitem{HS}
{\sc J.~Hu and L.~Shi}, {\em Graded dimensions and monomial bases for the cyclotomic quiver Hecke algebras}, Commun. Contemp. Math., (2023), article in press, doi:10.1142/S021919972350044X.



\bibitem{HS3}
\leavevmode\vrule height 2pt depth -1.6pt width 23pt, {\em  Piecewise dominant sequences and the cocenter of the
cyclotomic quiver Hecke algebras}, Math. Zeit., {\bf 303}(90) (2023), https://doi.org/10.1007/s00209-023-03251-4.

\bibitem{HS4}
\leavevmode\vrule height 2pt depth -1.6pt width 23pt, {\em Proof of the Center Conjectures for the cyclotomic Hecke and KLR algebras of type $A$},
preprint, arXiv:2211.07069v3, (2023).

\bibitem{KK}
{\sc S.~J. Kang and M.~Kashiwara}, {\em Categorification of highest weight modules via Khovanov-Lauda-Rouquier algebras}, Invent. Math., {\bf 190} (2012), 699--742.


\bibitem{KL1}
{\sc M.~Khovanov and A.D.~Lauda}, {\em A diagrammatic approach to categorification of quantum groups, I}, Represent. Theory, {\bf 13} (2009), 309--347.

\bibitem{KL2}
\leavevmode\vrule height 2pt depth -1.6pt width 23pt,  {\em A diagrammatic approach to categorification of quantum groups, II}, Trans. Amer. Math. Soc., {\bf 363} (2011), 2685--2700.



\bibitem{MT1}
{\sc A.~Mathas and D.~Tubenhauer}, {\em Cellularity and subdivision of KLR and weighted KLRW algebras}, Math. Ann.,
https://doi.org/10.1007/s00208-023-02660-4.

\bibitem{MT2}
\leavevmode\vrule height 2pt depth -1.6pt width 23pt, {\em Cellularity for weighted KLRW algebras of types $B, A^{(2)}, D^{(2)}$}, J. Lond. Math. Soc., {\bf 107}(2) (2023), 1002--1044.


\bibitem{RW}
{\sc S.~Riche and G.~Williamson}, {\em Tilting modules and the $p$-canonical bases}, Asterisque, {\bf 397}, 2018.

\bibitem{Rou1}
{\sc R.~Rouquier}, {\em $2$-Kac--Moody algebras}, preprint, math.RT/0812.5023v1, 2008.

\bibitem{Rou2}
\leavevmode\vrule height 2pt depth -1.6pt width 23pt, {\em Quiver Hecke algebras and 2-Lie algebras}, Algebr. Colloq. {\bf 19} (2012), 359--410.


\bibitem{SVV}
{\sc P.~Shan, M.~Varagnolo and E.~Vasserot}, {\em On the center of quiver-Hecke algebras}, Duke Math. J., {\bf 166}(6) (2017), 1005--1101.

\bibitem{VV}
{\sc M.~Varagnolo and E.~Vasserot}, {\em Canonical bases and KLR algebras}, J. reine angew. Math., {\bf 659} (2011), 67--100.

\bibitem{W0}
{\sc B.~Webster}, {\em Center of KLR algebras and cohomology rings of quiver varieties}, preprint, math.RT/1504.04401v2, 2015.

\bibitem{W1}
\leavevmode\vrule height 2pt depth -1.6pt width 23pt,  {\em Rouquier's conjecture and diagrammatic algebra}, Forum Math. Sigma, {\bf 5} (2017), e27, 71 pages.

\bibitem{W2}
\leavevmode\vrule height 2pt depth -1.6pt width 23pt,  {\em Weighted Khovanov-Lauda-Rouquier algebras}, Doc. Math., {\bf 24} (2019), 209¨C--250.

\end{thebibliography}
\end{document}